\newcommand{\pr}[1]{\left(#1\right)}
\newcommand{\br}[1]{\left[#1\right]}
\newcommand{\cb}[1]{\left\{#1\right \}}
\newcommand{\nm}[1]{\left\|#1\right \|}
\newcommand{\abs}[1]{\left|#1\right|}
\newcommand{\uint}{\int_0^1}
\newcommand{\hgn}{\hat{g}_n}
\newcommand{\hfn}{\hat{f}_n}
\newcommand{\E}{\mathbb{E}}
\newcommand{\R}{\mathbb{R}}
\renewcommand{\P}{\mathbb{P}}
\newcommand{\de}{\delta}
\crefname{hypothesis}{Hypothesis}{Hypotheses}
\title{Is speckle noise more challenging to mitigate than additive noise?}
\author{ Reihaneh Malekian\thanks{Columbia University, New York, NY, USA
 (\email{rm3942@columbia.edu}).}
 \and
 Hao Xing\thanks{CUNY Graduate Center, New York, NY, USA
 (\email{hao.xing43@gc.cuny.edu}).}
 \and
 Arian Maleki\thanks{Columbia University, New York, NY, USA
  (\email{mm4338@columbia.edu}).}
%\and Jane E. Smith\footnotemark[3]}
}
\newcommand*{\addFileDependency}[1]{% argument=file name and extension
  \typeout{(#1)}% latexmk will find this if $recorder=0 (however, in that case, it will ignore #1 if it is a .aux or .pdf file etc and it exists! if it doesn't exist, it will appear in the list of dependents regardless)
  \@addtofilelist{#1}% if you want it to appear in \listfiles, not really necessary and latexmk doesn't use this
  \IfFileExists{#1}{}{\typeout{No file #1.}}% latexmk will find this message if #1 doesn't exist (yet)
}
\newcommand*{\myexternaldocument}[1]{%
    \externaldocument{#1}%
    \addFileDependency{#1.tex}%
    \addFileDependency{#1.aux}%
}
\begin{document}
\newtheorem{defn}{Definition}
\maketitle

% REQUIRED

\begin{abstract}
 % This is an example SIAM \LaTeX\ article.
We study the problem of estimating a function in the presence of both speckle and additive noises, commonly referred to as the de-speckling problem. Although additive noise has been thoroughly explored in nonparametric estimation, speckle noise, prevalent in applications such as synthetic aperture radar, ultrasound imaging, and digital holography, has not received as much attention. Consequently, there is a lack of theoretical investigations into the fundamental limits of mitigating the speckle noise.This paper is the first step in filling this gap. 

Our focus is on investigating the minimax estimation error for estimating a $\beta$-H\"older continuous function and determining the rate of the minimax risk. Specifically, if $n$ represents the number of data points, $f$ denotes the underlying function to be estimated, $\hat{\nu}_n$ is an estimate of $f$, and $\sigma_n$ is the standard deviation of the additive Gaussian noise, then $\inf_{\hat{\nu}_n} \sup_f \mathbb{E}_f\| \hat{\nu}_n - f \|^2_2$ decays at the rate $\pr{\max(1,\sigma_n^4)/n}^{\frac{2\beta}{2\beta+1}}$. Comparing this rate with the rate achieved under purely additive noise, namely $\pr{\sigma_n^2/n}^{\frac{2\beta}{2\beta+1}}$, leads to the following insights: (i) When \( \sigma_n = \omega(1) \), the additive noise appears to be the dominant component in the de-speckling problem. However, the presence of speckle noise significantly complicates the task of mitigating its effects. As a result, the risk increases from the rate \( \left( \sigma_n^2 / n \right)^{\frac{2\beta}{2\beta+1}} \), which characterizes the problem with only additive noise, to \( \left( \sigma_n^4 / n \right)^{\frac{2\beta}{2\beta+1}} \) in the presence of both speckle and additive noise. (ii) When \( \sigma_n = o(1) \), the variance of the additive noise does not contribute to the risk in the de-speckling problem. This suggests that, in this regime, speckle noise is the primary bottleneck. Interestingly, the resulting risk rate matches the rate for mitigating purely additive noise with \( \sigma_n = \Theta(1) \). (iii) When $\sigma_n = \Theta(1)$, the two rates coincide, suggesting that both the speckle noise and additive noise are contributing to the overall error.

\end{abstract}

% REQUIRED
\begin{keywords}
  Speckle noise, de-speckling, denoising, minimax analysis.  
\end{keywords}

% REQUIRED
%\begin{AMS}
 % 68Q25, 68R10, 68U05
%\end{AMS}

\section{Introduction}

\subsection{Motivation and Problem Set-up}
 Denoising—the task of removing additive noise superimposed on a function—is a fundamental problem in signal processing and statistics. It is mathematically formulated as the problem of estimating an unknown function $f$ from noisy observations of the form  
\begin{equation}\label{eq:denoise:model}
\widetilde{y}_i = f(x_i) + w_i, \quad i = 1, 2, \ldots, n,
\end{equation}
where the noise terms $w_i$ are independent and identically distributed as  $\mathcal{N}(0, \sigma_n^2)$. Due to its significance in a wide range of applications, this problem has been the subject of extensive theoretical study, with substantial contributions dating back at least seven decades \cite{Na64, Wa64, Stone77, Stone80, Stone82, Ka79, Cleveland79, Fano1952, lecam1953, Ne85, NPT85, KP92, donoho1994ideal, Mainbook, donoho1999wedgelets, maleki2012suboptimality, maleki2013anisotropic, arias2012oracle, devore2025optimalrecoverymeetsminimax}. 
For a comprehensive summary of historical developments, we refer the reader to \cite{Nonp} and \cite{devore2025optimalrecoverymeetsminimax} and the references therein.

 Despite a rich theoretical literature on the denoising problem, the related challenge of de-speckling has remained virtually unexamined from a theoretical standpoint. Speckle noise is among the most significant sources of distortion in coherent imaging systems, including Synthetic Aperture Radar (SAR) \citep{Lopez-MartinezFabregas}, Optical Coherence Tomography (OCT) \citep{SXK}, ultrasound imaging \citep{ABT}, and digital holography \citep{dh}. Although numerous practical de-speckling methods have been developed, the theoretical foundations and fundamental limits of this problem remain largely unexplored. In this work, we aim to present the first theoretical analysis of the de-speckling problem.

Towards this goal, we consider the problem of recovering a function $f$ from its noise-corrupted samples in the presence of both additive and multiplicative noise. Specifically, suppose we are provided with observations, denoted as 
\begin{equation}\label{speckle noise model}
y_i = f\left(x_i\right) \xi_i + \tau_i, \ \ \ \ \ i = 1, 2, \ldots, n.
\end{equation}

Here, $\xi_i \overset{iid}{\sim} \mathcal{N}(0, 1)$ represents the multiplicative noise and $\tau_i \overset{iid}{\sim} \mathcal{N}(0, \sigma_n^2)$ denotes the additive noise. The objective is to estimate the function $f$ based on these observations.  

\begin{remark}
 In our model, we assume that the speckle noise follows a Gaussian distribution. This is motivated by the fact that speckle arises from the interference of coherent wavefronts scattered by microscopic surface variations. By the central limit theorem, physicists have shown that in most cases, speckle noise can be well-approximated by zero-mean Gaussian noise \cite{goodman1975statistical}. In practice, this model is  referred to as fully-developed speckle \cite{argenti2013tutorial}.
\end{remark}

\begin{remark}\label{remark: scaling}
In \cref{speckle noise model}, we have fixed the standard deviation of the speckle noise to be $1$. This is due to the fact that increasing the standard deviation of the multiplicative noise is the same as decreasing the standard deviation of the additive noise. More specifically, if $\xi_{i} \overset{iid}{\sim} \mathcal{N}(0, \sigma_\xi^2)$, then we can rescale our data to obtain
\[
\check{y}_i = \frac{y_i}{\sigma_\xi} =  f\left(x_i \right)\check{\xi}_{i} + \frac{\tau_i}{\sigma_\xi}, 
\]
where $\check{\xi}_i \overset{iid}{\sim} \mathcal{N}(0,1)$, and $\frac{\tau_i}{\sigma_\xi} \overset{iid}{\sim} \mathcal{N}(0, \frac{\sigma_n^2}{\sigma_\xi^2} )$.
In other words, assuming that $\sigma_\xi$ is known, the problem of estimating $f$ from $Y_1, \ldots, Y_n$ is equivalent to the problem of estimating $f$ from $\check{Y}_1, \ldots, \check{Y}_n$ in which the variance of the multiplicative noise is equal to $1$. Hence, without loss of generality we set $\sigma_\xi^2 =1$. 
\end{remark}

\vspace{3mm}
In this paper, we aim to answer the following questions:
\begin{itemize}
\item How well can we estimate $f$ from the observations $y_1, y_2, \ldots, y_n$? 
\item How is the complexity of this problem compared with the complexity of the denoising problem in \eqref{eq:denoise:model}. 
\end{itemize}

To address these questions, following the long history of the de-noising problem (see \cite{Nonp}), we adopt the minimax framework. The first step in this framework is to consider a class of functions for $f(\cdot)$. In \cite{Nonp}, it is assumed that the function $f$ is $\beta$-H{\"o}lder continuous, i.e. it belongs to the following class of functions:
\[
\Sigma(\beta, L) := \{ f: [0,1] \rightarrow \R \ : \ |f^{(k)}(x)- f^{(k)}(y)| \leq L |x-y|^{\beta-k}    \},
\]
where $L$ and $\beta$ are two positive constants, and $k$ is the largest integer strictly less than $\beta$. However, due to some specific features of the speckle noise problem, throughout this paper we consider the functions $f$ to be in the following subclass of $\beta$-H{\"o}lder functions: 
\begin{equation}
    \Sigma_{\mathfrak{h}}(\beta, L) := \{ f\in \Sigma(\beta, L) : 0<\mathfrak{h} \leq f(x) \leq 1 \}. 
\end{equation}

In \cref{app:model_justification}, we present the rationale for the choices we have made in the definition of  $ \Sigma_{\mathfrak{h}}(\beta, L)$. 

Using the minimax framework we measure the difficulty level of our estimation problem as:
\begin{equation}\label{eq:multiplicative:minimax:1}
R_2(\Sigma_{\mathfrak{h}}(\beta, L), \sigma_n) :=\inf_{\hat{\nu}} \sup_{f \in \Sigma_{\mathfrak{h}}(\beta, L)} \mathbb{E}_f \|f- \hat{\nu}\|^2_2,
\end{equation}
and 
\begin{equation}\label{eq:multiplicative:minimax:2}
R_{\infty}(\Sigma_{\mathfrak{h}}(\beta, L), \sigma_n) := \inf_{\hat{\nu}} \sup_{f \in \Sigma_{\mathfrak{h}}(\beta, L) } \mathbb{E}_f \|f- \hat{\nu}\|_{\infty},
\end{equation}
where in both equations $\hat{\nu}$ denotes an estimator of function $f$ based on the observations $y_1, y_2, \ldots, y_n$ in \cref{speckle noise model}. 

Therefore, the aim of the rest of this paper is to calculate $R_{2}(\Sigma_{\mathfrak{h}}(\beta, L), \sigma_n)$ and $R_{\infty}(\Sigma_{\mathfrak{h}}(\beta, L), \sigma_n)$ defined in \cref{eq:multiplicative:minimax:1} and \cref{eq:multiplicative:minimax:2} and compare them with the corresponding results for the denoising problem.

To distinguish the classical model (with only additive noises) from ours, we shall use $\widetilde{R}_2(\Sigma_{\mathfrak{h}}(\beta, L), \sigma_n)$ and  $\widetilde{R}_{\infty}(\Sigma_{\mathfrak{h}}(\beta, L), \sigma_n)$ for the denoising problem where estimators are based on $\tilde{y}_1, \tilde{y}_2, \ldots, \tilde{y}_n$. As mentioned before, the minimax risk has been used extensively in the past for the denoising problem. For instance, \cite{Nonp} (Corollary 1.2, Theorem 1.8), and \cite{Mainbook}(Example 2.7.1, Example 2.7.3) lead to the following conclusion:

\begin{theorem}\label{mimimax error for the classical additive noise model} For the denoising problem mentioned in \eqref{eq:denoise:model}, we have 
\begin{align}
\begin{split}  \widetilde{R}_2(\Sigma_{\mathfrak{h}}(\beta, L), \sigma_n) :=& \inf_{\hat{\nu}} \sup_{f \in \Sigma_{\mathfrak{h}}(\beta, L)} \mathbb{E}_f \|f- \hat{\nu}\|_2^2 \\
=&
\Theta_{\beta, L}\pr{ \min\cb{1, \max\cb{n^{-2\beta},\pr{\frac{\sigma_n^2}{n}}^{\frac{2\beta}{2\beta+1}}}}}.
\end{split}
\end{align}    
Furthermore, if $\sigma_n^2=O(n^{1-\eta})$ for some $\eta\in (0,1)$, then
\begin{align}\label{eq:addtive:minimax:2}
\begin{split}
    \widetilde{R}_{\infty}(\Sigma_{\mathfrak{h}}(\beta, L), \sigma_n) 
:= & \inf_{\hat{\nu}} \sup_{f \in \Sigma_{\mathfrak{h}}(\beta, L) } \mathbb{E}_f \|f- \hat{\nu}\|_{\infty} \\
=&
     \Theta_{\beta, L}\pr{ \max\cb{n^{-2\beta},\pr{\frac{ \sigma_n^2 \log n}{n}}^{\frac{\beta}{2\beta+1}}}}
\end{split}
\end{align}
where in these two equations $\hat{\nu}$ is an estimator that uses the data points from \cref{eq:denoise:model}.
\end{theorem}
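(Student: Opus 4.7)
My plan is to prove the stated minimax rates by matching upper and lower bounds, following the classical nonparametric regression template in \cite{Nonp} and \cite{Mainbook} with adjustments to accommodate the bounded-range constraint that defines $\Sigma_\mathfrak{h}(\beta,L)$.

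For the upper bound in $L^2$, I would use a local polynomial estimator of degree $\lfloor\beta\rfloor$ (equivalently a $\beta$-order kernel estimator) with bandwidth $h_n$ chosen to balance squared bias $O(h_n^{2\beta})$ against variance $O(\sigma_n^2/(nh_n))$, giving the optimal $h_n^\star \asymp (\sigma_n^2/n)^{1/(2\beta+1)}$ and the rate $(\sigma_n^2/n)^{2\beta/(2\beta+1)}$. Two degenerate regimes must be handled separately: when $h_n^\star > 1$ (i.e.\ $\sigma_n^2$ very large) I would output the constant $(1+\mathfrak{h})/2$, yielding the trivial bound of order $1$; when $\sigma_n^2 = o(n^{-2\beta})$ a piecewise polynomial interpolation through the samples has bias $O(n^{-2\beta})$ with negligible variance. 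Clipping the final estimate to $[\mathfrak{h},1]$ only decreases the error since $f$ itself lies in that interval. For the $L^\infty$ rate the same machinery yields the optimal bandwidth $(\sigma_n^2\log n/n)^{1/(2\beta+1)}$; the extra $\log n$ arises from a Gaussian maximal inequality over the $O(1/h_n)$ candidate design points, and the hypothesis $\sigma_n^2 = O(n^{1-\eta})$ keeps this bandwidth $o(1)$.

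For the lower bound, the strict containment $\Sigma_\mathfrak{h}(\beta,L) \subsetneq \Sigma(\beta,L)$ rules out directly quoting the classical result, so I would build the hypothesis family inside the smaller class by perturbing the baseline $f_0 \equiv (1+\mathfrak{h})/2$ with bumps,
\[
f_\omega(x) \;=\; f_0(x) + c_0 L h_n^\beta \sum_{k=1}^m \omega_k\, \varphi\!\left(\tfrac{x-x_k}{h_n}\right), \qquad \omega \in \{0,1\}^m,
\]
where $\varphi$ is a fixed compactly supported $C^\infty$ bump, the $x_k$ are $h_n$-separated in $[0,1]$ so that $m \asymp 1/h_n$, and $c_0$ is a small constant. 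For $h_n$ sufficiently small the perturbation is bounded in absolute value by $(1-\mathfrak{h})/2$, so every $f_\omega$ lies in $\Sigma_\mathfrak{h}(\beta,L)$. Standard Assouad/Varshamov--Gilbert arithmetic then gives pairwise $L^2$ separation $\gtrsim h_n^{2\beta+1}\rho(\omega,\omega')$ and per-coordinate KL divergence $\lesssim nh_n^{2\beta+1}/\sigma_n^2$; choosing $h_n \asymp (\sigma_n^2/n)^{1/(2\beta+1)}$ balances these and delivers the matching rate. The $L^\infty$ lower bound follows from a Fano argument on the same hypothesis class, whose $\log m$ term produces the advertised $\log n$ factor.

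The main obstacle, beyond book-keeping, is calibrating the perturbation so that every $f_\omega$ simultaneously stays in $[\mathfrak{h},1]$ and inside $\Sigma(\beta,L)$; this forces $c_0$ to be small enough that both constraints hold uniformly over all $h_n$ in the relevant asymptotic regime. Once this geometric calibration is pinned down, the rest of the argument is a direct transcription of Chapter 2 of \cite{Nonp} and Examples 2.7.1 and 2.7.3 of \cite{Mainbook}.
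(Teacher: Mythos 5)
Your overall plan matches the paper's proof quite closely: local polynomial estimators with bias--variance balancing for the upper bound, bump-function hypercube constructions calibrated to stay inside $\Sigma_{\mathfrak{h}}(\beta,L)$ for the lower bound, and Gilbert--Varshamov/Fano machinery to finish. You also correctly flag the main new wrinkle relative to the textbook setting, namely keeping the perturbations inside $[\mathfrak{h},1]$.

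The gap is in how you treat the extreme variance regimes in the lower bound. For the upper bound you propose separate fixes (constant estimator when $\sigma_n^2$ is huge; piecewise polynomial interpolation when $\sigma_n^2 = o(n^{-2\beta})$, which works but the variance there is of the \emph{same} order $n^{-2\beta}$ as the bias, not negligible). For the lower bound, however, you only describe the generic Assouad calculation with $h_n \asymp (\sigma_n^2/n)^{1/(2\beta+1)}$, which produces only the $(\sigma_n^2/n)^{2\beta/(2\beta+1)}$ term and not the $n^{-2\beta}$ floor or the $O(1)$ saturation. In the regime $\sigma_n^2 = o(n^{-2\beta})$ your choice forces $h_n \ll 1/n$, so the bumps fall between the equispaced design points; the per-coordinate KL bound $\lesssim n h_n^{2\beta+1}/\sigma_n^2$ no longer reflects what the data sees (it would blow up if bumps overlapped samples, and be exactly zero if they miss all samples), and the Assouad arithmetic as stated does not close. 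To get the $n^{-2\beta}$ floor one needs a qualitatively different construction: bumps of width $1/n$ that vanish at every sample $i/n$, so the two hypotheses are \emph{information-theoretically identical} and the bound comes from interpolation error rather than noise. The paper sidesteps all of this by first proving a monotonicity lemma (\cref{lem:risk:monotonicity}): the lower bound for small $\sigma_n$ is reduced to $\sigma_n = 0$ with a two-hypothesis, width-$1/n$ bump construction, and the $O(1)$ lower bound for $\sigma_n = \Omega(\sqrt{n})$ is obtained by a contradiction argument using the same monotonicity. You should incorporate such a monotonicity-based reduction, or explicitly switch to the width-$1/n$, sample-vanishing construction, to complete the lower bound; the references you cite assume $\sigma_n = \Theta(1)$ and do not supply this part.
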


Since the classical literature focuses only on the case where $\sigma_n= \Theta(1)$, we include a self-contained proof of \cref{mimimax error for the classical additive noise model} in \cref{proof of mimimax error for the classical additive noise model}.
 However, we should emphasize that our proof is a minor modification of the proof presented in Sections 1.6, 2.5 and 2.6 of \cite{Nonp}.  

The aim of the rest of this paper is to calculate $R_{\infty}(\Sigma_{\mathfrak{h}}(\beta, L), \sigma_n)$ and $R_{2}(\Sigma_{\mathfrak{h}}(\beta, L), \sigma_n)$ defined in \cref{eq:multiplicative:minimax:1} and \cref{eq:multiplicative:minimax:2} and compare them with the corresponding results for the denoising problem.

\subsection{Notations}
%\textcolor{red}{Will be revised later}
For $\alpha \in \mathbb{R}$, $\lfloor\alpha \rfloor$ (resp. $\lceil\alpha \rceil$) denotes the largest (resp. smallest) integer not greater (resp. bot less) than $\alpha$. 

For two sequences of numbers $\{a_n\}$ and $\{b_n\}$, indexed with $n$, we say $a_n=O(b_n)$, or equivalently $b_n=\Omega(a_n)$, if there exist constants $C>0$, and $M>0$, such that for all $n>M$, $|a_n|\le C |b_n|$. Also, $a_n=o(b_n)$, or equivalently $b_n=\omega(a_n)$ if $\lim_{n\to\infty} {a_n}/{b_n} =0$. Finally, we write $a_n = \Theta (b_n)$ if $a_n = O(b_n)$ and $b_n = O(a_n)$. 

The \emph{Hamming distance} between two vectors $u,v \in \{0,1\}^m$, is represented with $\rho (u,v)$, and is defined as  
\[
\rho(u,v) := \sum_{i=1}^m \mathbbm{1}\{u_i \neq v_i\},
\]
where $\mathbbm{1}\{\cdot\}$ denotes the indicator function. For a matrix $A$, $\lVert A \rVert_F$ and $\lVert A \rVert$ denote its Frobenius and operator norms, respectively.
For function $\phi: \mathbb{R} \rightarrow \mathbb{R}$, we use the following notations:
$$\lVert \phi \rVert^2_2 :=\int^{\infty}_{-\infty} \phi(x)^2 dx, $$
$$\phi^{*} :=\lVert \phi \rVert_{\infty}:= \sup_{x} |\phi(x)|.$$

For any function $f:\mathbb{R}\to \mathbb{R}$, we define 
$$\operatorname{supp}(f):= \{x \in \mathbb{R} : f(x)>0\}.$$

Suppose that $\nu'$ and $\nu''$ denote two different measures defined on random vector $y_1, \ldots, y_n$. Then we use the following notation for the likelihood ratio:
\begin{align}\label{lr:def}
\Lambda\left(\nu^{\prime}, \nu^{\prime \prime}\right):= \Lambda\left(\nu^{\prime}, \nu^{\prime \prime};y_1, \ldots, y_n\right)= \mathbb{P}_{\nu^{\prime}}(y_1, \ldots, y_n)/\mathbb{P}_{\nu^{\prime \prime}}(y_1, \ldots, y_n),
\end{align}
where $\mathbb{P}_{\nu^{\prime}}(y_1, \ldots, y_n)$ denotes the probability density function of $y_1, y_2, \cdots, y_n$ under the measure $\nu'$. 

\subsection{Organization of our paper}

The paper is organized as follows. We state our main theoretical results regarding the de-speckling problem in 
\cref{sec:main}. Furthermore, we compare the results of our minimax analysis with the results of \cref{mimimax error for the classical additive noise model}  ad clarify its implications.  The proofs of our main results will be presented in  \cref{proofs}.  This section begins with the statement of preliminary results required for our analysis, ensuring the self-containment of the section.
  Finally, we conclude in \cref{Conclusion}.
%and the conclusions follow in \cref{sec:conclusions}.

\section{Main contributions}\label{sec:main}

As described before, our main goal is to calculate $R_{\infty}(\Sigma_{\mathfrak{h}}(\beta, L), \sigma_n)$ and $R_{2}(\Sigma_{\mathfrak{h}}(\beta, L), \sigma_n)$ defined in \cref{eq:multiplicative:minimax:1} and \cref{eq:multiplicative:minimax:2} and compare them with the corresponding results for the denoising problem. Since in imaging applications the samples are equispaced, we consider the following model for the data:
\[
Y_{i}=f\left(i/n\right)\xi_{i} + \tau_i,
\]
where $\xi_i \overset{iid}{\sim} \mathcal{N}(0,1)$ and $\tau_i \overset{iid}{\sim} \mathcal{N}(0,{\sigma}^2_n)$. Our first result establishes the minimax risk $R_2 (\Sigma_{\mathfrak{h}}(\beta, L), \sigma_n)$.

\begin{theorem}\label{l_2}
We have
\begin{equation}\label{current bounds}
   R_2 \left(\Sigma_{\mathfrak{h}}(\beta, L), \sigma_n\right) =\Theta_{\beta, L, \mathfrak{h}}\pr{ \min\cb{1,\pr{\frac{\max(1,\sigma_n^4)}{n}}^{\frac{2\beta}{2\beta+1}}}}.
\end{equation}
\end{theorem}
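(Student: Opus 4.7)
The plan is to prove matching upper and lower bounds on $R_2(\Sigma_{\mathfrak{h}}(\beta,L),\sigma_n)$, both driven by the ``effective noise variance'' $\max(1,\sigma_n^4)$ that emerges once the observations are squared.

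\emph{Upper bound via squaring.} The key idea is to linearize the model. Although $Y_i$ has mean $0$ and carries no direct information about $f(x_i)$, the square $Y_i^{2}$ satisfies $\E[Y_i^2]=f(x_i)^2+\sigma_n^2$ and $\var(Y_i^2)=2f(x_i)^4+4f(x_i)^2\sigma_n^2+2\sigma_n^4$. Setting $Z_i:=Y_i^{2}-\sigma_n^2$ yields an unbiased estimator of $g(x_i):=f(x_i)^2$ with additive (non-Gaussian) noise whose variance is uniformly $\Theta(\max(1,\sigma_n^4))$ on $\Sigma_{\mathfrak{h}}(\beta,L)$. Since $f\in[\mathfrak{h},1]\cap\Sigma(\beta,L)$, the squared function $g=f^2$ lies in $\Sigma(\beta,2L)$ and takes values in $[\mathfrak{h}^2,1]$. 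I would then apply a standard local polynomial estimator of order $\lfloor\beta\rfloor$, as in Chapter~1 of \cite{Nonp}, with bandwidth $h_n\asymp\pr{\max(1,\sigma_n^4)/n}^{1/(2\beta+1)}$ to the regression data $\cb{(x_i,Z_i)}_{i=1}^n$. A routine bias-variance decomposition, which only requires second-moment control of the noise, yields $\sup_f \E\,\nm{\hat g-g}_2^2\lesssim(\max(1,\sigma_n^4)/n)^{2\beta/(2\beta+1)}$. Finally, inverting the square through $\hat{f}:=\sqrt{\hat g\vee (\mathfrak{h}^2/2)}\wedge 1$ and using that $\sqrt{\cdot}$ is Lipschitz with constant $1/(\sqrt{2}\,\mathfrak{h})$ on $[\mathfrak{h}^2/2,\infty)$ gives $\nm{\hat f-f}_2^2\le\nm{\hat g-g}_2^2/(2\mathfrak{h}^2)$, closing the argument.

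\emph{Lower bound via Assouad.} I would follow the classical hypercube construction from Section~2.6 of \cite{Nonp}, adapted to the speckle likelihood. Fix a smooth kernel $K$ supported in $[0,1]$ with $K\in\Sigma(\beta,1/2)$ and $\nm{K}_2>0$. For $m:=\lceil 1/h_n\rceil$, let $x_k:=k/m$ and, for $\omega\in\{0,1\}^m$, define
\begin{equation*}
f_\omega(x):=c_0+\delta h_n^{\beta}\sum_{k=1}^{m}\omega_k K\!\pr{(x-x_k)/h_n},
\end{equation*}
where $c_0\in(\mathfrak{h},1)$ and $\delta>0$ are fixed constants chosen small enough that $f_\omega\in\Sigma_{\mathfrak{h}}(\beta,L)$ for every $\omega$. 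Under $P_\omega$, the data are independent with $Y_i\sim\mathcal{N}(0,f_\omega(x_i)^2+\sigma_n^2)$, and the identity $\mathrm{KL}\pr{\mathcal{N}(0,\sigma_1^2)\,\|\,\mathcal{N}(0,\sigma_2^2)}=\tfrac12\pr{\sigma_1^2/\sigma_2^2-1-\log(\sigma_1^2/\sigma_2^2)}$ combined with Taylor expansion (valid since $\delta$ is a small absolute constant) gives, for $\omega,\omega'$ differing in one coordinate,
\begin{equation*}
\mathrm{KL}(P_\omega,P_{\omega'})\lesssim \sum_{i=1}^{n}\frac{\pr{f_\omega(x_i)^2-f_{\omega'}(x_i)^2}^2}{\pr{\mathfrak{h}^2+\sigma_n^2}^2}\lesssim \frac{n\,h_n^{2\beta+1}}{\max(1,\sigma_n^4)}.
\end{equation*}
Choosing $h_n\asymp(\max(1,\sigma_n^4)/n)^{1/(2\beta+1)}$ makes this $O(1)$, and Assouad's lemma then delivers $R_2\gtrsim m\,h_n^{2\beta+1}=h_n^{2\beta}\asymp\pr{\max(1,\sigma_n^4)/n}^{2\beta/(2\beta+1)}$. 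For the $\min\{1,\cdot\}$ clipping, the trivial constant estimator $\hat f\equiv c_0$ achieves $R_2\le 1$, while a two-point Le Cam bound between two constant hypotheses in $\Sigma_{\mathfrak{h}}$ separated by a fixed gap supplies a matching constant lower bound in the regime $\max(1,\sigma_n^4)/n\gtrsim 1$.

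\emph{Main obstacle.} The most delicate piece will be ensuring that the Taylor expansion in the lower-bound KL computation remains valid uniformly across the full range $\sigma_n\in(0,\infty)$: the relative perturbation $u=(f_\omega^2-f_{\omega'}^2)/(f_{\omega'}^2+\sigma_n^2)$ must be controlled, which requires choosing the bump amplitude $\delta$ independently of $n$ and $\sigma_n$ while preserving membership in $\Sigma_{\mathfrak{h}}(\beta,L)$. On the upper-bound side, a minor subtlety is that $Z_i$ is sub-exponential rather than sub-Gaussian; this does not affect the $L_2$ rate since only variance enters the bias-variance calculation, but it is the reason a parallel $L_\infty$ analysis would require sharper tail controls.
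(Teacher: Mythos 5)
Your upper bound is essentially the paper's argument: square the data, subtract $\sigma_n^2$, run a local polynomial estimator on $g=f^2$ with bandwidth $h_n\asymp\pr{\max(1,\sigma_n^4)/n}^{1/(2\beta+1)}$, and invert using $f\ge \mathfrak{h}$ (the paper converts via $|\hat\nu-f|\le|\hat\nu^2-f^2|/\mathfrak{h}$ for nonnegative estimators rather than your truncated square root, but these are interchangeable). One caveat there: your claim that $f^2\in\Sigma(\beta,2L)$ is only immediate for $\beta\le 1$; for $\beta>1$ the Leibniz rule brings in products of intermediate derivatives, and one must first show that $|f|\le 1$ together with $f\in\Sigma(\beta,L)$ forces all derivatives up to order $k$ to be uniformly bounded by a constant $c_{\beta,L}$ — this is exactly what the paper proves in its auxiliary lemmas, yielding $f^2\in\Sigma(\beta,C_{\beta,L})$ for a constant depending on $(\beta,L)$ but not equal to $2L$. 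This does not affect the rate, but it is a genuine step, not a one-liner.

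Your lower bound, by contrast, takes a genuinely different route from the paper. You exploit the fact that under each hypothesis the observations are exactly centered Gaussians with variance $f^2(x_i)+\sigma_n^2$, so the per-observation KL divergence has the closed form $\tfrac12\pr{r-1-\log r}$, which Taylor expansion (with $|u|\le C\delta/\mathfrak{h}^2$ made small by choosing $\delta$ small, uniformly in $\sigma_n$ once $h_n\le 1$) reduces to $\lesssim n h_n^{2\beta+1}/\max(1,\sigma_n^4)$ per flipped coordinate; Assouad then gives $h_n^{2\beta}$, and a two-point Le Cam bound handles the regime $\max(1,\sigma_n^4)\gtrsim n$. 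The paper instead builds a Gilbert--Varshamov packing of $2^{\Theta(m)}$ hypotheses and verifies the likelihood-ratio condition of Tsybakov's many-hypotheses lemma directly, which requires Laurent--Massart concentration for the quadratic terms and a non-iid Berry--Esseen argument for the linear terms; it also treats the degenerate regime via monotonicity of the risk in $\sigma_n$ rather than Le Cam. Your approach buys considerable simplicity for the $L_2$ statement precisely because the exact Gaussian KL is available and the squared $L_2$ loss separates over the disjoint bumps; the paper's heavier likelihood-ratio machinery is what it then reuses for the $L_\infty$ lower bound of its second theorem, where an Assouad-type additive-loss argument would not apply. Your identified obstacle (uniform validity of the Taylor step in $\sigma_n$) is real but resolvable exactly as you indicate, so I see no essential gap in the plan.
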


\vspace{5mm}

    To compare \cref{l_2} with the minimax error analysis of the classical additive noise model (cf. \cref{mimimax error for the classical additive noise model}), we discuss the following three settings:
 \begin{enumerate}
\item Strong additive noise, $\sigma_n = \omega(1)$: In this case, we can see that $ R_2 \left(\Sigma_{\mathfrak{h}}(\beta, L), \sigma_n\right) = \Theta\left(\left(\frac{\sigma_n^4}{n}\right)^{\frac{2\beta}{2 \beta+1}}\right)$, while $\tilde{R}\left(\Sigma_{\mathfrak{h}}(\beta, L), \sigma_n\right) = \Theta\left(\left(\frac{\sigma_n^2}{n}\right)^{\frac{2\beta}{2 \beta+1}}\right)$. 
Comparing the two cases reveals that the error decays more slowly in the de-speckling problem. This suggests that, although additive noise may appear to be the dominant component, the presence of even seemingly small speckle noise significantly complicates the task of mitigating the additive noise. 

\item Medium additive noise, $\sigma_n = \Theta(1)$: In this case, we can see that $$ R_2 \left(\Sigma_{\mathfrak{h}}(\beta, L), \sigma_n\right) = \Theta (\tilde{R_2} \left(\Sigma_{\mathfrak{h}}(\beta, L), \sigma_n\right)).$$ In other words, order-wise, in this case, the de-speckling and de-noising problems are equally hard. 
 
\item Weak additive noise, $\sigma_n = o(1)$: In this case, we can see that 
$$ R_2 \left(\Sigma_{\mathfrak{h}}(\beta, L), \sigma_n\right) \ll \tilde{R_2} \left(\Sigma_{\mathfrak{h}}(\beta, L), \sigma_n\right).$$ Furthermore, the variance of the additive noise does not appear in $R_2 \left(\Sigma_{\mathfrak{h}}(\beta, L), \sigma_n\right)$. This indicates that, in this case, the bottleneck in estimating the function \( f(\cdot) \) from \( y_1, y_2, \ldots, y_n \) is the speckle noise, and that the contribution of the additive noise is negligible.

 \end{enumerate}

\iffalse
\begin{remark}
Note that the rates we have obtained for $R_2 \left(\Sigma_{\mathfrak{h}}(\beta, L), \sigma_n\right)$ do not depend on $\sigma_n$. In other words, even if $\sigma_n$ converges to zero very fast, it does not improve the decay rate, and the bottleneck is the multiplicative noise. 
\end{remark}

\begin{remark}
Ignoring the minor mismatch between the upper bound and the lower bound, roughly speaking, this theorem states that $R_2 \left(\Sigma_{\mathfrak{h}}(\beta, L), \sigma_n\right) \sim n^{-\frac{2\beta}{{2 \beta + 1}}}$. While our theoretical results obtain the rates for large values of $n$, the simulation results we obtain in \cref{simulations} confirm this observation even for not-so-large values of $n$. 
\end{remark}

\begin{remark}
Comparing the approximate decay rate $R_2 \left(\Sigma_{\mathfrak{h}}(\beta, L), \sigma_n\right) \sim n^{-\frac{2\beta}{{2 \beta + 1}}}$  with that of the additive noise, i.e. \cref{eq:addtive:minimax:1}, we realize that the rates are the same when $\sigma_n$ is a constant that does not decay with $n$. In other words, as far as the minimax rates of estimation are concerned, if the variances of the additive noise and multiplicative noise are at the same order, then the complexity of de-speckling and denoising problems are the same. 
\end{remark}
\fi

Our second theorem aims to calculate $R_{\infty} \left(\Sigma_{\mathfrak{h}}(\beta, L), \sigma_n\right)$.  

\begin{theorem}\label{l_infty}
We have for any $\eta\in (0,1)$ and $\sigma_n^4=o(n^{1-\eta})$,
\begin{align}
    R_{\infty} \left(\Sigma_{\mathfrak{h}}(\beta, L), \sigma_n\right) =
\Theta_{\beta,L,\eta, \mathfrak{h}}\pr{\pr{\max(1,\sigma_n^4)\pr{\log n \over n}}^{\beta \over 2\beta+1}} 
\end{align}
\end{theorem}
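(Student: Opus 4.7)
The plan is to prove matching upper and lower bounds, and to do so by reducing the de-speckling problem to a nonparametric regression problem for $f^2$ with sub-exponential noise.

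\textbf{Upper bound.} The key observation is that $y_i = f(x_i)\xi_i + \tau_i \sim \mathcal{N}(0, f(x_i)^2 + \sigma_n^2)$, so if we set
\[
z_i := y_i^2 - \sigma_n^2, \qquad i=1,\dots,n,
\]
then $\mathbb{E}[z_i] = f(x_i)^2 =: g(x_i)$ and $z_i - g(x_i)$ is sub-exponential with variance $2(f(x_i)^2 + \sigma_n^2)^2 = O(\max(1,\sigma_n^4))$. Since $f \in \Sigma_{\mathfrak{h}}(\beta,L)$ and $0<\mathfrak h\le f\le 1$, the function $g=f^2$ lies in some class $\Sigma_{\mathfrak h^2}(\beta, L')$ with $L'$ depending on $L,\beta$. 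I would then apply a standard locally polynomial estimator of degree $\lfloor\beta\rfloor$ with bandwidth $h_n \asymp (\max(1,\sigma_n^4)\log n/n)^{1/(2\beta+1)}$ to the data $(x_i, z_i)$ to produce $\hat g_n$, and define $\hat f_n := \sqrt{\hat g_n \vee \mathfrak h^2}$. Uniform control of $\|\hat g_n - g\|_\infty$ follows from the Holder bias bound $O(h_n^\beta)$ together with a Bernstein-type maximal inequality for the (sub-exponential) weighted averages of the $z_i-g(x_i)$ over an $O(1/h_n)$-net of bandwidth-$h_n$ windows; the $\log n$ factor enters through the union bound over this net. This gives $\|\hat g_n-g\|_\infty = O_P\!\bigl((\max(1,\sigma_n^4)\log n/n)^{\beta/(2\beta+1)}\bigr)$. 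Because $g\ge \mathfrak h^2>0$ and the assumption $\sigma_n^4 = o(n^{1-\eta})$ ensures the above rate tends to $0$, the map $u\mapsto \sqrt{u}$ is uniformly $(2\mathfrak h)^{-1}$-Lipschitz on the relevant range with high probability, transferring the rate from $\hat g_n$ to $\hat f_n$.

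\textbf{Lower bound.} I would use a Fano-type argument tailored to $L_\infty$ loss, adapting the hypothesis construction already employed in the proof of \cref{mimimax error for the classical additive noise model}. Pick $h_n \asymp (\max(1,\sigma_n^4)\log n/n)^{1/(2\beta+1)}$ and partition $[0,1]$ into $m := \lfloor 1/h_n\rfloor$ sub-intervals with centers $x_j^*$. Let $K$ be a smooth bump supported on $[-1/2,1/2]$ and set $\delta_n := L h_n^\beta / 2$. Define the null hypothesis $f_0 \equiv c_0$ for some constant $c_0 \in (\mathfrak h, 1)$, and $m$ alternatives $f_j(x) := c_0 + \delta_n K\!\bigl((x-x_j^*)/h_n\bigr)$, $j=1,\dots,m$. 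These lie in $\Sigma_{\mathfrak h}(\beta,L)$ for large $n$ and are pairwise separated by $\Omega(\delta_n)$ in $L_\infty$. The key computation is the KL divergence: because $y_i\mid f$ is $\mathcal N(0, f(x_i)^2+\sigma_n^2)$,
\[
\mathrm{KL}(P_{f_0}^{\otimes n}\Vert P_{f_j}^{\otimes n}) \;=\; \tfrac12 \sum_{i=1}^n\!\Bigl[\log\tfrac{f_j(x_i)^2+\sigma_n^2}{c_0^2+\sigma_n^2} + \tfrac{c_0^2+\sigma_n^2}{f_j(x_i)^2+\sigma_n^2}-1\Bigr].
\]
Using $\log(1+u)+\frac{1}{1+u}-1 = O(u^2)$ for small $u$, one gets per-sample KL of order $(f_j(x_i)^2-c_0^2)^2/(c_0^2+\sigma_n^2)^2 = O(\delta_n^2/\max(1,\sigma_n^4))$, and only $O(nh_n)$ samples fall in the bump support, giving total KL $= O(n h_n \delta_n^2/\max(1,\sigma_n^4)) = O(\log n)$ by the choice of $h_n,\delta_n$. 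Since $\log m \asymp \log n$, Fano's inequality yields a lower bound of order $\delta_n$ for the $L_\infty$ minimax risk.

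\textbf{Anticipated main obstacle.} The delicate step is the $L_\infty$ uniform concentration of the local polynomial estimator applied to $z_i$, whose noise is only sub-exponential (coming from $\chi^2_1$ tails) rather than sub-Gaussian. Standard references handle Gaussian additive noise; here I would need a Bernstein-type bound and a chaining or net argument that correctly captures the $\log n$ factor while keeping the dependence on $\sigma_n$ explicit through the sub-exponential scale parameter $\max(1,\sigma_n^2)$. The scaling constraint $\sigma_n^4 = o(n^{1-\eta})$ should be used to control the Bernstein "linear" term and to ensure the Lipschitz transfer from $\hat g_n$ to $\hat f_n$ is valid with high probability.
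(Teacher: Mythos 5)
Your proposal is correct in outline, and its two halves relate to the paper differently. The upper bound is essentially the paper's own argument: square the data, estimate $g=f^2$ by a local polynomial estimator applied to $y_i^2-\sigma_n^2$, bound the bias by $O(h_n^{\beta})$ using the H\"older property of $f^2$, control the stochastic term via Bernstein's inequality for sub-exponential variables plus a union bound over a grid, choose $h_n\asymp\pr{\max(1,\sigma_n^4)\log n/n}^{1/(2\beta+1)}$, and transfer from $f^2$ to $f$ using $f\ge \mathfrak{h}$ (your $\hat f_n=\sqrt{\hat g_n\vee \mathfrak{h}^2}$ makes the $(2\mathfrak{h})^{-1}$-Lipschitz transfer deterministic, so no high-probability caveat is even needed). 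The lower bound is where you genuinely diverge: the paper does not use Fano/KL for this model but instead the likelihood-ratio criterion of \cref{thm1}, expanding $\log\Lambda(\nu_0,\nu_j)$ to second order, controlling the quadratic terms with the Laurent--Massart bound, and invoking the Berry--Esseen theorem (\cref{BE}) to show $\Lambda(\nu_0,\nu_j)>m^{-\lambda_j}$ with constant probability (\cref{lem:likelihoodratio2}). Your route exploits that each $y_i$ is a centered Gaussian with variance $f(x_i)^2+\sigma_n^2$, so the KL divergence between hypotheses is explicit, per-sample of order $h_n^{2\beta}/\max(1,\sigma_n^4)$, and the total over the $O(nh_n)$ samples in a bump is $O(\log n)$; since $\sigma_n^4=o(n^{1-\eta})$ gives $\log m=\Theta_\eta(\log n)$, Fano applies after shrinking the bump amplitude by a constant to make the KL coefficient small relative to $\log m$ (this is where the $\eta$-dependence of the constant enters, mirroring the paper). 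Your approach buys a much shorter lower-bound proof with no normal approximation; the paper's likelihood-ratio computation is what it then reuses almost verbatim for the $L_2$ lower bound with the Gilbert--Varshamov packing, so it serves double duty there.

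Two execution points you should tighten. First, an $O(1/h_n)$-net of windows is too coarse as stated: the LPE weights vary with $x$ inside a window (and are discontinuous for a boxcar kernel), so bounding $\sup_x$ by a max over net points needs either a polynomially fine grid plus a Lipschitz bound on $x\mapsto S_i(x)$ (the paper takes $M=n^2$ grid points) or a genuine chaining argument; relatedly, the theorem bounds an expectation, so the $O_P$ statement must be upgraded by integrating the Bernstein tail (as the paper does for $\E\max_j\|Z_j\|_2^2$), which your exponential tails do permit. Second, in your Fano step the divergence should be taken as $\mathrm{KL}(P_{f_j}\Vert P_{f_0})$ rather than the reverse; for these small variance perturbations the two are of the same order, so this is cosmetic, but it should be stated in the right direction.
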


All the remarks we made about \cref{l_2} are also true about \cref{l_infty}. Hence, we do not repeat them here. 

%\begin{remark}
%Note that 
%Similarly, we can prove the same rates, when $\sigma_n$ is assumed to be zero. In other words, when the observations are assumed to be generated from the model
%\begin{equation}
%y_i = f\left(x_i\right) \xi_i , \ \ \ \ \ i = 1, 2, \ldots, n,
%\end{equation}
%and $\xi_i \overset{iid}{\sim} \mathcal{N}(0, 1)$. We skip the proofs as they are very similar to our proofs presented here.
%\end{remark}

\section{Proofs}\label{proofs}

This section is devoted to the proofs of \cref{l_2} and \cref{l_infty}. Before we start the proofs, we state a few preliminary results that will be used in our proofs. 

\subsection{Preliminaries}
In this section, we review some classical results that will be used in our main proofs. 

The first lemma characterizes one of the minimax risks  studied in this paper.

\begin{lemma}\label{lem:risk:monotonicity}
All the functions $R_{\infty} \left(\Sigma_{\mathfrak{h}}(\beta, L), \sigma_n\right)$,  $R_2 \left(\Sigma_{\mathfrak{h}}(\beta, L), \sigma_n\right)$, $\tilde{R}_{\infty} \left(\Sigma_{\mathfrak{h}}(\beta, L), \sigma_n\right)$, and $\tilde{R}_{\infty} \left(\Sigma_{\mathfrak{h}}(\beta, L), \sigma_n\right)$ are nondecreasing functions of $\sigma_n$. 
\end{lemma}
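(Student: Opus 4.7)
The plan is to prove monotonicity by a standard data augmentation (noise convolution) argument, which works because Gaussian noise is closed under convolution: for $0\le \sigma_n \le \sigma_n'$, setting $\sigma'' := \sqrt{(\sigma_n')^2 - \sigma_n^2}$, an independent draw $z_i \sim \mathcal{N}(0,(\sigma'')^2)$ satisfies $\tau_i + z_i \sim \mathcal{N}(0,(\sigma_n')^2)$ whenever $\tau_i \sim \mathcal{N}(0,\sigma_n^2)$. Thus, given observations from the smaller noise level, one can simulate observations from the larger noise level at no cost, so the problem with larger $\sigma_n$ can only be harder.

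Concretely, I would argue the four cases uniformly. Fix $\sigma_n \le \sigma_n'$. For the de-speckling model, suppose we are given the data $y_i = f(x_i)\xi_i + \tau_i$ with $\tau_i \sim \mathcal{N}(0,\sigma_n^2)$. Draw independent $z_i \sim \mathcal{N}(0,(\sigma'')^2)$ (independent of everything else) and set $y_i^\prime := y_i + z_i$. Then $(y_1^\prime,\ldots,y_n^\prime)$ has exactly the law of the de-speckling observations at noise level $\sigma_n'$. For any measurable estimator $\hat{\nu}'$ built from $(y_1',\ldots,y_n')$, the randomized estimator $\hat{\nu}(y_1,\ldots,y_n) := \hat{\nu}'(y_1+z_1,\ldots,y_n+z_n)$ has the property that, for every $f\in \Sigma_\mathfrak{h}(\beta,L)$,
\begin{equation*}
\mathbb{E}_f \|f-\hat{\nu}\|_2^2 \;=\; \mathbb{E}_f \|f-\hat{\nu}'\|_2^2, \qquad \mathbb{E}_f \|f-\hat{\nu}\|_\infty \;=\; \mathbb{E}_f \|f-\hat{\nu}'\|_\infty,
\end{equation*}
where on the left the expectation is over $(\xi_i,\tau_i,z_i)$ and on the right it is over the equivalent law at noise level $\sigma_n'$. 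Taking suprema over $f$ and then infima over $\hat{\nu}'$ yields
\begin{equation*}
R_2(\Sigma_\mathfrak{h}(\beta,L),\sigma_n) \le R_2(\Sigma_\mathfrak{h}(\beta,L),\sigma_n'), \qquad R_\infty(\Sigma_\mathfrak{h}(\beta,L),\sigma_n) \le R_\infty(\Sigma_\mathfrak{h}(\beta,L),\sigma_n').
\end{equation*}
The identical argument, applied to $\tilde{y}_i = f(x_i)+w_i$ with $w_i \sim \mathcal{N}(0,\sigma_n^2)$ and augmentation $\tilde{y}_i+z_i$, handles $\tilde{R}_2$ and $\tilde{R}_\infty$.

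One small technical point worth noting is that the infima in the definitions of $R_2,R_\infty,\tilde R_2,\tilde R_\infty$ are taken over (deterministic measurable) estimators, whereas $\hat{\nu}$ above is randomized. This is routine: one can either redefine the infima over randomized estimators (the value is unchanged since the risks are expectations), or condition on $(z_1,\ldots,z_n)$ and note that at least one realization of the auxiliary randomness yields a deterministic estimator with worst-case risk no larger than the randomized one — by Fubini applied to the sup-integrated risk, an averaging/selection argument produces a deterministic estimator with risk at most $\sup_f \mathbb{E}_f\|f-\hat{\nu}'\|_2^2$.

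I do not expect any serious obstacle: Gaussian closure under convolution makes the augmentation exact, so no approximation is needed, and the randomization issue is standard. The main bookkeeping is simply writing down that the coupled data $(y_1+z_1,\ldots,y_n+z_n)$ has the same joint law as $(y_1',\ldots,y_n')$ under every $f$, which is immediate because $\xi_i$ is unchanged and the additive noise components add to the correct Gaussian law.
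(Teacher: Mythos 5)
Your core idea---augment the low-noise observations with an independent Gaussian $z_i\sim\mathcal{N}(0,(\sigma_n')^2-\sigma_n^2)$ so that $y_i+z_i$ has the law of the high-noise model, then transfer an estimator backwards---is exactly the argument in the paper, so the main approach matches. Where you diverge is the handling of the randomization, and that part contains a real error.

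Your "selection" argument does not work. Writing $R_f(z):=\mathbb{E}_{y\mid f}\|f-\hat\nu'(y+z)\|_2^2$, the worst-case risk of the randomized estimator is $\sup_f \mathbb{E}_z R_f(z)$, while a deterministic estimator obtained by fixing $z=z_0$ has worst-case risk $\sup_f R_f(z_0)$. Fubini gives $\mathbb{E}_z\bigl[\sup_f R_f(z)\bigr]\ge \sup_f \mathbb{E}_z R_f(z)$, which is the \emph{wrong} direction: the average over $z$ of the deterministic worst-case risks can strictly exceed the randomized worst-case risk, so there need not be any realization $z_0$ with $\sup_f R_f(z_0)\le \sup_f\mathbb{E}_z R_f(z)$. (A toy counterexample: two functions $f_1,f_2$, $z$ uniform on $\{0,1\}$, $R_{f_1}(0)=R_{f_2}(1)=0$, $R_{f_1}(1)=R_{f_2}(0)=2$; then the randomized worst-case risk is $1$ but every fixed $z$ gives worst-case risk $2$.) Your other suggestion---that the infimum is unchanged if taken over randomized estimators---is true, but it is true \emph{because} of the Jensen/Rao--Blackwell step, so invoking it as an alternative to that step is circular.

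The correct derandomization, which is what the paper does, is to set $\hat g(y):=\mathbb{E}_z\bigl[\hat\nu'(y+z)\bigr]$ and apply Jensen's inequality using convexity of $u\mapsto\|f-u\|_2^2$ (respectively $u\mapsto\|f-u\|_\infty$) to get, for \emph{every} $f$, $\mathbb{E}_{y\mid f}\|f-\hat g(y)\|_2^2\le \mathbb{E}_{y\mid f}\mathbb{E}_z\|f-\hat\nu'(y+z)\|_2^2$. Because this inequality holds pointwise in $f$, it survives the supremum, and $\hat g$ is a genuine deterministic estimator based on $y$ alone. With this replacement your argument is complete and coincides with the paper's proof.
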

\begin{proof}
 The monotonicity of the minimax risk with respect to the variance of additive Gaussian noise in denoising problems is a well-established result \cite{donoho1994ideal}. However, for the sake of completeness—and because this monotonicity has not been established in the specific setting of our paper—we provide a proof for $R_2(\Sigma_{\mathfrak{h}}(\beta, L), \sigma_n)$ below. Consider the two cases,
\begin{align}
\begin{split}
    Y_i = f(i/n)\zeta_i + \tau_i,  \\
\tilde{Y}_i = f(i/n)\tilde{\zeta}_i + \tilde{\tau}_i.
\end{split}
\end{align}
where $\zeta_i \sim N(0,1)$, $\tilde{\zeta}_i \sim N(0,1)$, $\tau_i \sim N(0, \sigma^2_n)$, and $\tilde{\tau}_i \sim N(0, \tilde{\sigma}_n^2)$. Furthermore, our assumption is that $\tilde{\sigma}_n> \sigma_n$. Our goal is to show that
\[
R_2(\Sigma_{\mathfrak{h}}(\beta, L), \sigma_n) \leq R_2(\Sigma_{\mathfrak{h}}(\beta, L), \tilde{\sigma}_n).
\]
Suppose that $\hat{f} (\tilde{Y}_1, \tilde{Y}_2, \ldots, \tilde{Y}_n)$ is an arbitrary estimator that is based on $\tilde{Y}_1, \tilde{Y}_2, \ldots, \tilde{Y}_n$. Based on this estimator we are going to construct an estimator that can be applied to $Y_1, \ldots, Y_n$. We first generate independent random variables $\omega_i \sim N(0, \sqrt{\tilde{\sigma}_n^2 - \sigma_n^2})$, and construct the estimate: $\hat{f} ({Y}_1 + \omega_1, {Y}_2+\omega_2, \ldots, {Y}_n+\omega_n)$. By our construction we have that
\[
\mathbb{E} \|\hat{f} ({Y}_1 + \omega_1, {Y}_2+\omega_2, \ldots, {Y}_n+\omega_n)- f\|_2^2 = \mathbb{E} \|  \hat{f} (\tilde{Y}_1, \tilde{Y}_2, \ldots, \tilde{Y}_n)- f\|_2^2. 
\]
Let $\mathbb{E}_Y$ denote the conditional expectation given the random variables $Y_1, Y_2, \ldots, Y_n$. Then, 
\begin{align}
\begin{split}
    & \mathbb{E} \|\hat{f} ({Y}_1 + \omega_1, {Y}_2+\omega_2, \ldots, {Y}_n+\omega_n)- f\|_2^2  \\
= & \mathbb{E} \br{ \E_{Y}  \|\hat{f} ({Y}_1 + \omega_1, {Y}_2+\omega_2, \ldots, {Y}_n+\omega_n)- f\|_2^2}  \\
 \geq & \mathbb{E} \br{  \| \E_{Y} \hat{f} ({Y}_1 + \omega_1, {Y}_2+\omega_2, \ldots, {Y}_n+\omega_n)- f\|_2^2},
\end{split}
\end{align}
where the last inequality is a result of Jensen's inequality. Note that $\E_{Y} [\hat{f} ({Y}_1 + \omega_1, {Y}_2+\omega_2, \ldots, {Y}_n+\omega_n)]$ is a valid estimator that is only using information in $Y_1, Y_2, \ldots, Y_n$. 

Hence, for every estimator $\hat{f} (\tilde{Y}_1, \tilde{Y}_2, \ldots, \tilde{Y}_n)$ we have
\[
\sup_{f \in \Sigma_{\mathfrak{h}}(\beta, L)} \mathbb{E}  \| \E_{Y} \hat{f} ({Y}_1 + \omega_1, {Y}_2+\omega_2, \ldots, {Y}_n+\omega_n)- f\|_2^2 \leq \sup_{f \in \Sigma_{\mathfrak{h}}(\beta, L)} \mathbb{E} \|\hat{f} (\tilde{Y}_1, \tilde{Y}_2, \ldots, \tilde{Y}_n) - f\|_2^2
\]
Hence, 
\[
\inf_{\hat{\nu}} \sup_{f\in \Sigma_{\mathfrak{h}}(\beta, L)} \mathbb{E}  \| (\hat{\nu} ({Y}_1 {Y}_2, \ldots, {Y}_n))- f\|_2^2 \leq \inf_{\hat{\nu}} \sup_{f \in\Sigma_{\mathfrak{h}}(\beta, L)} \mathbb{E} \|\hat{\nu} (\tilde{Y}_1, \tilde{Y}_2, \ldots, \tilde{Y}_n) - f\|_2^2,
\]
which implies:

\[
R_2(\Sigma_{\mathfrak{h}}(\beta, L), \sigma_n) \leq R_2(\Sigma_{\mathfrak{h}}(\beta, L), \tilde{\sigma}_n). 
\]
\end{proof}

In our proofs, we will be using the properties of subGaussian and subexponential random variables. Recall that a variable $X$ is called \textit{sub-gaussian} with \textit{sub-gaussian norm} $\|X\|_{\rm subgau}$ if
\begin{align*}
    \|X\|_{\rm subgau}:=\inf\cb{t>0:\E\br{X^2/t^2}\le 2} 
\end{align*}
is bounded. Similarly, $X$ is called \textit{sub-exponential} with \textit{sub-exponential norm} $\|X\|_{\rm subexp}$ if
\begin{align*}
    \|X\|_{\rm subexp}:=\inf\cb{t>0:\E\br{|X|/t}\le 2}
\end{align*}
is bounded. The following lemma summarizes some of the useful properties of subGaussian and subexponential random variables.  
\begin{lemma}[Lemma 2.7.6, Lemma 2.7.7 and Exercise 2.7.10 of \cite{vershynin2018high}]\label{basic properties of subgau and subexp}The following is true for sub-gaussian and sub-exponential random variables
    \begin{enumerate}
    \item A standard normal Gaussian random variable $\xi\sim N(0,1)$ is sub-gaussian.
        \item Random variable $X$
 is sub-gaussian if and only if $X^2$ is sub-exponential, and if this the case, $\|X^2\|_{\rm subexp}=\|X\|_{\rm subgau}$.
 \item Let $X$ and $Y$ be
 sub-gaussian random variables. Then $XY$ is sub-exponential with $\|XY\|_{\rm subexp}\le \|X\|_{\rm subgau}\|Y\|_{\rm subgau}$
 \item Let $X$ be a sub-gaussian (resp. sub-exponential random variable), then there exists an absolute constant $C$ such that $\|X-\E X\|_{\rm subgau}\le C\|X\|_{\rm subgau}$ (resp. $\|X-\E X\|_{\rm subexp}\le C\|X\|_{\rm subexp}$)
    \end{enumerate}
\end{lemma}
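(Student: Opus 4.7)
The plan is to verify each of the four items by direct manipulation of the Orlicz-type defining inequalities for the sub-gaussian and sub-exponential norms; none of the claims requires machinery beyond Jensen, Cauchy--Schwarz, and an elementary Gaussian integral. For item~(1), I would compute $\E[\exp(\xi^2/t^2)]$ in closed form from the standard Gaussian density: for $t>\sqrt{2}$ it evaluates to $(1-2/t^2)^{-1/2}$, and setting this quantity equal to $2$ yields an explicit finite value of $\|\xi\|_{\rm subgau}$. For item~(2), the plan is to observe that the sub-exponential Orlicz condition applied to the nonnegative variable $X^2$ reads $\E[\exp(X^2/s)]\le 2$, which is exactly the sub-gaussian condition on $X$ under the substitution $s=t^2$; the two infima therefore coincide and the equality of norms follows.

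For item~(3), I would use Young's inequality $|xy|\le \tfrac12(x^2/a^2 + a^2 y^2)$ to bound $|XY|$ pointwise, substitute this bound into the exponential in the definition of the sub-exponential norm, and split the resulting expectation of the product via Cauchy--Schwarz. Optimizing the free parameter $a$ in terms of the ratio $\|X\|_{\rm subgau}/\|Y\|_{\rm subgau}$ then delivers the stated multiplicative bound. For item~(4), I would combine two standard facts: the triangle inequality for the Orlicz norm (a consequence of the convexity of $x\mapsto\exp(x^2)$ or $\exp(|x|)$) and the elementary observation that a deterministic constant $c$ has sub-gaussian (resp.\ sub-exponential) norm proportional to $|c|$. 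The contribution of $\E X$ to $\|X-\E X\|$ is then controlled via Jensen by $|\E X|\le (\E X^2)^{1/2}$, and $(\E X^2)^{1/2}$ is in turn bounded by an absolute multiple of $\|X\|_{\rm subgau}$ by reading off a low-order coefficient of the Taylor expansion of $\exp$ inside the defining inequality.

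The main obstacle, such as it is, lies in the constant bookkeeping for item~(3): the free parameter $a$ must be chosen carefully in order to produce the clean product bound $\|XY\|_{\rm subexp}\le \|X\|_{\rm subgau}\|Y\|_{\rm subgau}$, rather than the weaker bound $\|XY\|_{\rm subexp}\lesssim \|X\|_{\rm subgau}^2+\|Y\|_{\rm subgau}^2$ that falls out of a naive use of Young's inequality with $a=1$. All other steps are routine textbook manipulations, consistent with the lemma being cited directly from \cite{vershynin2018high}.
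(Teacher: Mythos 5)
The paper does not prove this lemma; it is cited verbatim from Vershynin's textbook (Lemma~2.7.6, Lemma~2.7.7, and Exercise~2.7.10), so there is no in-paper argument to compare against. Your proposed verification is the standard textbook route and is essentially correct for all four items. For item~(3) you slightly overcomplicate matters: if you first set $a := |X|/\|X\|_{\rm subgau}$ and $b := |Y|/\|Y\|_{\rm subgau}$ and apply the symmetric Young inequality $ab \le \tfrac12(a^2+b^2)$, Cauchy--Schwarz immediately yields $\E\exp\!\big(|XY|/(\|X\|_{\rm subgau}\|Y\|_{\rm subgau})\big) \le \big(\E\exp(X^2/\|X\|_{\rm subgau}^2)\big)^{1/2}\big(\E\exp(Y^2/\|Y\|_{\rm subgau}^2)\big)^{1/2}\le 2$, so no optimization over a free parameter $a$ is actually needed; the ``naive $a=1$'' loss you warn about only occurs if one forgets to normalize each factor by its own norm before applying Young.

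One genuine discrepancy worth flagging: in item~(2), the lemma as transcribed in the paper asserts $\|X^2\|_{\rm subexp}=\|X\|_{\rm subgau}$, but the correct identity (and the one in Vershynin) is $\|X^2\|_{\rm subexp}=\|X\|_{\rm subgau}^2$. Your own substitution $s=t^2$ shows exactly this: the infimizing $s$ is the \emph{square} of the infimizing $t$, so the two infima do not ``coincide'' as numbers; one is the square of the other. Your argument thus proves the correct statement, but the phrase ``the equality of norms follows'' glosses over the square, and you should state explicitly that the paper's version of item~(2) contains a typo. (Relatedly, the paper's displayed definitions of $\|\cdot\|_{\rm subgau}$ and $\|\cdot\|_{\rm subexp}$ omit the exponential inside the expectation; your sketch correctly uses the Orlicz forms $\E\exp(X^2/t^2)\le 2$ and $\E\exp(|X|/t)\le 2$.)
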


The next two lemmas are two basic concentration results, one for simple quadratic functions of Gaussian random vectors, and the second one for the concentration of sum of subexponential random variables. 

\begin{lemma}\label{lM}\citep{laurentmassart} 
Let $Z_1, Z_2, \ldots, Z_n$ be iid random variables distributed as $\mathcal{N}(0,1)$ and $a \in \mathbb{R}^n_{\geq 0}$. Then for all $t\geq 0$,

$$\mathbb{P}\left(\sum_i a_i Z^2_i- \sum_i a_i \geq 2\|a\|_2\sqrt{t} + 2\|a\|_\infty t \right) \leq e^{-t},$$

and
$$\mathbb{P}\left(\sum_i a_i Z^2_i- \sum_i a_i \leq -2\|a\|_2\sqrt{t} \right) \leq e^{-t}.$$
\end{lemma}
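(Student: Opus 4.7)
The plan is to apply the classical Chernoff--Cramér method to the centered weighted sum $S := \sum_i a_i Z_i^2 - \sum_i a_i$. Since the $Z_i$ are independent standard Gaussians and each $a_i \geq 0$, each $a_i Z_i^2$ is a scaled $\chi^2_1$ random variable with moment generating function $\E[e^{\lambda a_i Z_i^2}] = (1-2\lambda a_i)^{-1/2}$ for any $\lambda < 1/(2 a_i)$. First, I would compute
$$\psi(\lambda) := \log \E[e^{\lambda S}] = -\lambda \sum_i a_i - \tfrac{1}{2}\sum_i \log(1 - 2\lambda a_i),$$
which is well defined on $\lambda \in [0, 1/(2\|a\|_\infty))$. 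For the lower tail I would analogously work with $\log \E[e^{-\lambda S}] = \lambda \sum_i a_i - \tfrac{1}{2}\sum_i \log(1 + 2\lambda a_i)$, which is defined for all $\lambda \geq 0$.

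Next, I would bound $\psi$ on each side using elementary scalar inequalities. For the upper tail, applying $-u - \log(1-u) \leq u^2/(2(1-u))$ (valid on $u \in [0,1)$) with $u = 2\lambda a_i$ and summing gives the Bernstein-type bound
$$\psi(\lambda) \leq \sum_i \frac{\lambda^2 a_i^2}{1 - 2\lambda a_i} \leq \frac{\lambda^2 \|a\|_2^2}{1 - 2\lambda\|a\|_\infty}.$$
Markov's inequality then gives $\P(S \geq x) \leq \exp\!\left(-\lambda x + \lambda^2\|a\|_2^2/(1-2\lambda\|a\|_\infty)\right)$, and choosing $\lambda = \sqrt{t}/(\|a\|_2 + 2\|a\|_\infty \sqrt{t})$ together with $x = 2\|a\|_2\sqrt{t} + 2\|a\|_\infty t$ makes the exponent collapse to exactly $-t$, yielding the first inequality of the lemma. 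For the lower tail, I would instead use the sharper bound $\log(1+u) \geq u - u^2/2$ for $u \geq 0$, which leads to the purely sub-Gaussian estimate $\log \E[e^{-\lambda S}] \leq \lambda^2\|a\|_2^2$ for every $\lambda \geq 0$. An unconstrained Chernoff optimization with $\lambda = x/(2\|a\|_2^2)$ then gives $\P(-S \geq x) \leq \exp(-x^2/(4\|a\|_2^2))$, and setting $x = 2\|a\|_2\sqrt{t}$ produces $e^{-t}$.

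The main obstacle is the algebraic step in the upper-tail argument: verifying that the prescribed $\lambda$ makes the Chernoff exponent equal to $-t$. This is short but delicate, and it is precisely the place where the two-term form $2\|a\|_2\sqrt{t} + 2\|a\|_\infty t$---reflecting the transition between the sub-Gaussian regime (dominated by $\|a\|_2$) and the sub-exponential regime (dominated by $\|a\|_\infty$)---is forced by the Bernstein bound on $\psi$. The lower tail is genuinely easier because its log-MGF admits a quadratic upper bound for every $\lambda \geq 0$, so no boundary constraint $\lambda < 1/(2\|a\|_\infty)$ intervenes and the optimum $\lambda$ is attained in closed form with no $\|a\|_\infty$ term surviving.
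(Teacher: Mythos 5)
Your proposal is correct, and since the paper merely cites \cite{laurentmassart} without reproducing a proof, the relevant comparison is with that reference: your Chernoff--Cram\'er argument is essentially the proof given there (Laurent and Massart's Lemma 1). All the steps check out: the elementary inequality $-u-\log(1-u)\le u^2/(2(1-u))$ on $[0,1)$ gives the Bernstein-type log-MGF bound $\psi(\lambda)\le \lambda^2\|a\|_2^2/(1-2\lambda\|a\|_\infty)$, and the substitution $\lambda=\sqrt{t}/(\|a\|_2+2\|a\|_\infty\sqrt{t})$ with $x=2\|a\|_2\sqrt{t}+2\|a\|_\infty t$ does collapse the exponent to exactly $-t$; on the lower tail, $\log(1+u)\ge u-u^2/2$ for $u\ge 0$ yields the purely sub-Gaussian bound $\lambda^2\|a\|_2^2$ on $\log\E e^{-\lambda S}$, and the unconstrained optimum recovers $e^{-t}$.
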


\begin{lemma}\label{berstein's inequality for sub-exponential distribution}[Berstein's inequality for sub-exponential distributions. Theorem 2.8.1, \cite{vershynin2018high}]
Let $Z_1,...,Z_n$ be independent mean zero, sub-exponential random variables. Then, for $t>0$, we have
\begin{equation}
\mathbb{P}\cb{\abs{\sum_{j=1}^nZ_j}\ge t}\le \exp\br{-c\min\pr{\frac{t^2}{\sum_{j=1}^n \nm{Z_j}_{\rm subexp}^2},\frac{t}{\max_{1
\le j\le n}\nm{Z_j}_{\rm subexp}}}}
\end{equation}
where $c>0$ is an absolute constant.
\end{lemma}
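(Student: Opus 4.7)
The plan is to follow the classical Chernoff/Cram\'er route: apply Markov's inequality to $e^{\lambda \sum_j Z_j}$, factor the MGF by independence, control each one-variable MGF on a small interval around zero, and then optimize $\lambda$, handling two regimes depending on whether the unconstrained optimum lies inside or outside the admissible interval for $\lambda$.

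The key intermediate step I would establish first is the following MGF estimate for a centered sub-exponential variable $Z$ with $\nm{Z}_{\rm subexp}=K$: there exist absolute constants $c_1,c_2>0$ such that $\E[e^{\lambda Z}]\le \exp(c_2\lambda^2 K^2)$ for all $|\lambda|\le c_1/K$. To prove this I would (i) use Markov's inequality on $\E[e^{|Z|/K}]$ together with the defining bound $\E[|Z|/K]\le 2$ (after passing through the equivalence between exponential-Orlicz and linear-absolute-value forms of the sub-exponential norm) to derive a tail bound of the form $\P(|Z|\ge u)\le 2e^{-u/K}$; (ii) integrate this tail to obtain the moment estimate $\E|Z|^p\le C^p p!K^p$; and (iii) expand $e^{\lambda Z}=1+\lambda Z+\sum_{p\ge 2}\lambda^p Z^p/p!$, use $\E Z=0$ to kill the linear term, and sum the remaining geometric series in $|\lambda|K$, which converges to something of order $\lambda^2 K^2$ on the claimed interval. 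By item 4 of \cref{basic properties of subgau and subexp} this bound transfers (up to a change in absolute constants) to each $Z_j$, which is already centered by assumption.

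Next I would combine these one-variable estimates: for any $|\lambda|\le c_1/M$ with $M:=\max_j\nm{Z_j}_{\rm subexp}$,
\begin{equation*}
\P\pr{\sum_{j=1}^n Z_j\ge t}\le e^{-\lambda t}\prod_{j=1}^n\E[e^{\lambda Z_j}]\le \exp\pr{-\lambda t+c_2\lambda^2 V},
\end{equation*}
where $V:=\sum_{j=1}^n \nm{Z_j}_{\rm subexp}^2$. Minimizing the right-hand side in $\lambda$ gives two regimes: when $t\le 2c_1 c_2 V/M$, the unconstrained minimizer $\lambda^\star=t/(2c_2 V)$ is admissible and produces the Gaussian-type exponent $-t^2/(4c_2 V)$; otherwise I would pin $\lambda$ to the boundary $c_1/M$, giving an exponent of order $-t/M$ (the quadratic term $c_2 \lambda^2 V$ is then dominated by $\lambda t$ in this regime, absorbed by adjusting constants). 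Taking the smaller of the two exponents yields the unified bound $\exp(-c\min(t^2/V,t/M))$. A symmetric argument applied to $-Z_j$ in place of $Z_j$ and a two-term union bound then deliver the two-sided statement for $\abs{\sum_j Z_j}$.

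The step I expect to be the main obstacle is establishing the quadratic MGF bound in the first paragraph in the sharp form needed: the argument requires that the constants produced by the tail-to-moment-to-Taylor-series chain neither blow up with $n$ nor depend on the distribution beyond the sub-exponential norm, and that the geometric remainder in the Taylor expansion is genuinely absorbed into a $\lambda^2 K^2$ term rather than an inflated $\lambda^2 K^2 + O(\lambda^3 K^3)$ expression that would spoil the optimization. Once this MGF estimate is nailed down, the remainder of the proof reduces to two elementary one-variable optimizations of a quadratic subject to a linear constraint.
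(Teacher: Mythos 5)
The paper gives no proof of this lemma---it is quoted directly from Theorem 2.8.1 of \cite{vershynin2018high}---and your Chernoff/MGF argument (quadratic MGF bound for $|\lambda|\le c_1/K$, factorization over independent terms, two-regime optimization in $\lambda$, and a two-sided union bound) is precisely the standard proof given there, so it is correct and essentially the same approach. One caveat: the tail bound $\P\pr{|Z|\ge u}\le 2e^{-u/K}$ in your step (i) requires the Orlicz form $\E\br{\exp\pr{|Z|/K}}\le 2$ of the sub-exponential norm (the displayed definition in the paper omits the exponential, evidently a typo carried over from \cite{vershynin2018high}); there is no genuine equivalence with the literal linear condition $\E\br{|Z|/K}\le 2$, which by Markov alone yields only a polynomial tail, so your argument should simply be phrased with the Orlicz definition from the outset.
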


The next lemma is a version of the well-known Berry-Esseen theorem. 

\begin{lemma}\label{BE}(non-iid Berry-Esseen theorem \citep{Ber-Ess})
Let $Z_1, Z_2, \ldots, Z_n$ be independent, zero mean random variables with finite second moment. Suppose that there exists a fixed number $\kappa$ such that  
$$\max_{1\leq i \leq n}\frac{\mathbb{E}|Z_i|^3}{\mathbb{E}(Z_i)^2} \leq \kappa.$$
 If $F, \Phi$ denote the CDFs of $\frac{\sum_i Z_{i}}{\sqrt{\sum_i \mathbb{E} Z_{i}^{2}}}$ and $\mathcal{N}(0,1)$ respectively, then there exists a constant $C_0$, such that
$$\sup _{x}\left|F(x)-\Phi(x)\right|\leq C_0 \frac{\kappa}{\sqrt{\sum_i \mathbb{E} Z_{i}^{2}}}.$$
\end{lemma}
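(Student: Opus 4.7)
The plan is to follow the classical Fourier-analytic proof of Berry--Esseen, adapted to the non-identically distributed setting. Setting $B_n^2 := \sum_{i=1}^n \E Z_i^2$ and $X_i := Z_i/B_n$, we have $S_n := \sum_i X_i$ with mean zero and unit variance, and the hypothesis yields $\sum_i \E |X_i|^3 \leq \kappa/B_n$. The goal then reduces to showing that the distribution function $F$ of $S_n$ satisfies $\sup_x |F(x)-\Phi(x)| \leq C_0 \kappa / B_n$.

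The main tool is Esseen's smoothing inequality: for any $T>0$,
\[
\sup_x |F(x)-\Phi(x)| \;\leq\; \frac{1}{\pi}\int_{-T}^{T}\left|\frac{\phi_{F}(t)-e^{-t^2/2}}{t}\right|dt \;+\; \frac{c}{T},
\]
where $\phi_F(t) = \prod_{i=1}^n \phi_{X_i}(t)$ is the characteristic function of $S_n$. I would Taylor-expand each factor as $\phi_{X_i}(t) = 1 - t^2 \E X_i^2/2 + r_i(t)$ with $|r_i(t)| \leq |t|^3 \E|X_i|^3/6$. Passing to logarithms on the range $|t| \leq B_n/(2\kappa)$, where each factor stays bounded away from zero, and using $\sum_i \E X_i^2 = 1$, one obtains
\[
\left|\log \phi_F(t) + \tfrac{t^2}{2}\right| \;\leq\; C\, |t|^3\, \frac{\kappa}{B_n},
\]
which after exponentiation gives $|\phi_F(t) - e^{-t^2/2}| \leq C|t|^3 (\kappa/B_n) e^{-t^2/4}$ on that range. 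Choosing the cutoff $T = B_n/\kappa$ and substituting into Esseen's inequality, the integral is dominated by $C(\kappa/B_n)\int_{\R} t^2 e^{-t^2/4}\,dt = C'\kappa/B_n$ and the $c/T$ term contributes another $c\kappa/B_n$, yielding the desired bound.

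The main obstacle is the passage from the pointwise Taylor estimate of $\phi_{X_i}$ to a clean uniform bound on the product $\phi_F$: one must justify the branch of the logarithm by verifying that each factor stays close to $1$ on a sufficiently long $t$-interval, and then combine the polynomial Taylor remainder with Gaussian decay so that the integrand in Esseen's inequality is absolutely integrable. A secondary subtlety is the choice of cutoff $T = B_n/\kappa$: shrinking $T$ too far leaves an irreducible $c/T$ error, while pushing it past the radius on which the Taylor/log argument is valid requires a separate argument showing that $|\phi_F(t)|$ itself decays, which one obtains from the classical pointwise estimate $|\phi_{X_i}(t)| \leq 1 - c\, t^2/B_n^2 + \text{smaller}$ valid for $|t|/B_n$ not too large, and the product then retains enough Gaussian-type decay to close the argument.
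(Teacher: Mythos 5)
The paper does not prove this lemma; it cites it from the literature as a classical non-iid Berry--Esseen theorem, so there is no in-paper proof to compare against. Your sketch is the standard Fourier-analytic argument via Esseen's smoothing inequality, and it is correct in outline. The reduction worth making explicit: with $B_n^2 := \sum_i \E Z_i^2$, the hypothesis $\max_i \E|Z_i|^3/\E Z_i^2 \le \kappa$ yields $\sum_i \E|Z_i|^3 \le \kappa B_n^2$, so the Lyapunov ratio $L_n := \sum_i \E|Z_i|^3/B_n^3$ satisfies $L_n \le \kappa/B_n$, and the stated bound is an immediate corollary of the classical Esseen form $\sup_x|F(x)-\Phi(x)| \le C_0 L_n$. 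After the normalization $X_i := Z_i/B_n$, you correctly obtain $\sum_i \E|X_i|^3 \le \kappa/B_n$ and $\sum_i \E X_i^2 = 1$, which is exactly what the smoothing argument needs.

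One small point on the cutoff: your worry that taking $T = B_n/\kappa$ past the Taylor radius ``requires a separate argument showing that $|\phi_F(t)|$ itself decays'' is not needed for the Berry--Esseen rate. It suffices to pick $T = c_0 B_n/\kappa$ for a fixed small numerical constant $c_0$, chosen so that (i) each $|\phi_{X_i}(t)-1| < 1/2$ on $|t|\le T$, which already holds on $|t|\le B_n/(2\kappa)$ since $\E|X_i|^3 \le (\kappa/B_n)\E X_i^2$ together with Lyapunov's inequality gives $\E X_i^2 \le (\kappa/B_n)^2$ and $\E|X_i|^3 \le (\kappa/B_n)^3$; and (ii) the exponentiation step $|e^{z}-1|\le |z|e^{|z|}$ leaves enough Gaussian decay, say $e^{-t^2/4}$. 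The $c/T$ term in Esseen's inequality then contributes $(c/c_0)\,\kappa/B_n$, which is still of order $\kappa/B_n$. A separate pointwise decay estimate on $|\phi_F|$ beyond the Taylor region only becomes relevant when chasing the sharp absolute constant or Edgeworth-type corrections, not for the $O(\kappa/B_n)$ rate itself.
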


Our last lemma will be used in the proofs of the lower bounds.

\begin{lemma}\label{GV}(Gilbert-Varshamov bound \citep{Gilbert},\citep{Varshamov}, \citep{Mainbook}) 
Let $\Omega=\{0,1\}^m$, where $m \geq 8$. Then, there exists a hypercube  $\Omega^{\prime}:=\left\{\omega^1, \ldots, \omega^M\right\} \subseteq \Omega$ such that
$M \geq 2^{m / 8}$, each $\omega_i$ has at least $m/16$ nonzero entries, and we have
$\rho\left(\omega^j, \omega^k\right) \geq m / 16$ for each $j \neq k$.
\end{lemma}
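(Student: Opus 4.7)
The plan is the classical Varshamov--Gilbert greedy packing argument, lightly adapted so that the resulting codewords automatically satisfy the minimum-weight constraint. I restrict the ambient universe to $W := \cb{\omega \in \cb{0,1}^m : |\omega| \geq m/16}$, where $|\omega|$ denotes the Hamming weight, and then greedily pick codewords from $W$ while maintaining pairwise Hamming distance at least $m/16$. Both the weight condition and the distance condition then hold by construction, so only the lower bound $M \geq 2^{m/8}$ remains to verify.

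Concretely, I set $\omega^1 = (1,1,\ldots,1) \in W$, and given $\omega^1,\ldots,\omega^{k-1}$ pick any $\omega^k \in W$ with $\rho(\omega^k,\omega^j) \geq m/16$ for all $j<k$ if such a vector exists, and otherwise stop after $M$ selections. When the process terminates, every point of $W$ must lie in some Hamming ball $B(\omega^j, m/16) := \cb{\omega : \rho(\omega, \omega^j) < m/16}$, because otherwise the selection could be continued; hence
\[
|W| \leq M \cdot \max_{x \in \cb{0,1}^m} |B(x, m/16)|.
\]

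Both sides are controlled by the standard binomial--entropy bound $\sum_{k \leq pm}\binom{m}{k} \leq 2^{m H(p)}$ valid for $p \leq 1/2$, where $H(p) := -p\log_2 p - (1-p)\log_2(1-p)$. Each ball satisfies $|B(x, m/16)| \leq 2^{m H(1/16)}$, and the complement $\cb{0,1}^m \setminus W$ (strings of weight strictly less than $m/16$) has the same cardinality bound $2^{m H(1/16)}$, so $|W| \geq 2^m - 2^{m H(1/16)}$. Combining gives
\[
M \geq \frac{2^m - 2^{m H(1/16)}}{2^{m H(1/16)}} = 2^{m(1-H(1/16))} - 1,
\]
and it therefore suffices to check the numerical inequality $H(1/16) \leq 7/8$. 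A direct estimate
\[
H(1/16) = \tfrac{1}{4} + \tfrac{15}{16}\log_2\pr{1 + \tfrac{1}{15}} \leq \tfrac{1}{4} + \tfrac{1}{16 \ln 2} < \tfrac{1}{2}
\]
settles this with enormous slack, and for $m \geq 8$ the hypothesis absorbs the additive $-1$ to yield $M \geq 2^{m/8}$.

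The combinatorial skeleton is routine, so I do not anticipate any real obstacle. The only point requiring a bit of care is the boundary case $m = 8$, where the weight and distance thresholds both become $m/16 = 1/2$ and thus degenerate to ``nonzero'' and ``distinct'' respectively; the bound $M \geq 2$ demanded by $2^{m/8} = 2$ holds trivially there. The genuine content of the argument is the entropy inequality $1 - H(1/16) \geq 1/8$, and the ample gap between $H(1/16) \approx 0.34$ and $7/8$ indicates the constants $1/16, 1/8$ in the statement are comfortably within range of what this packing strategy delivers.
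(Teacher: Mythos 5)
The paper states this lemma as a citation (to Gilbert, Varshamov, and to Tsybakov's book) and does not supply its own proof, so there is nothing internal to the paper to compare your argument against. Your proof — greedy maximal packing inside the weight-constrained set $W=\{\omega:|\omega|\ge m/16\}$ followed by a covering/volume bound — is correct and self-contained: the termination argument (every point of $W$ lies in some ball $B(\omega^j,m/16)$ once the greedy process stalls) is sound; the binomial--entropy bound $\sum_{k\le pm}\binom{m}{k}\le 2^{mH(p)}$ correctly controls both the ball volume and the complement $\{0,1\}^m\setminus W$; the numerical estimate $H(1/16)\le \tfrac14+\tfrac{1}{16\ln 2}<\tfrac12$ is right; and the slack $1-H(1/16)>\tfrac12>\tfrac18$ is indeed ample enough to absorb the additive $-1$ for $m\ge 8$. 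The only remark worth adding is that your route differs from the proof in the cited reference \cite{Mainbook} (Lemma~2.9 there), which establishes the stronger assertion of pairwise distance $\ge m/8$ for a code containing the all-zeros word by a second-moment counting argument over pairs rather than a packing argument; the paper's statement (distance and minimum weight both $\ge m/16$) then follows by discarding the zero codeword. Your greedy packing argument is closer to Gilbert's original approach and has the pleasant feature of building the weight constraint directly into the ambient universe, landing exactly on the thresholds the lemma asks for.
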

%Finally, we restate a helpful lemma (\cite{vershynin2018high}, Lemma 6.2.2).
%
%
%\begin{lemma}\label{chaos} Let $X, X^{\prime} \overset{iid}{\sim}  N\left(0, I_n\right)$ and let $A=\left(a_{i j}\right)$ be an $n \times n$ matrix. Then
$$
%\mathbb{E} \exp \left(\theta X^{T} A X^{\prime}\right) \leq \exp \left(B \theta^2\|A\|_F^2\right)
$$
%for some constant $B>0$ and all $\theta$ satisfying $|\theta| \leq \frac{1}{\sqrt{B}\|A\|}$.
%\end{lemma}
%\textcolor{red}{I am not sure when the name "Gaussian Chaos" has come from. It does not make sense to me. Make sure it it correctly cited. Also, please define $B$. Also, try to specify $b$ as much as you can. I think it should be possible to say exactly what $b$ is. After all these are Gaussian integrals and have explicit forms.  }
%
%

\subsection{Auxiliary lemmas}

In our main proofs, we need a few properties of the functions in $\Sigma(\beta, L)$ that we prove below.

\begin{lemma}\label{lem: all derivatives}
    Suppose $|f| \le 1$ and $f\in \Sigma(\beta, L)$. Then there exists a constant $c_{\beta, L}>0$ (independent of $f$) such that for all $1\le r\le k$, where $k$ is the largest integer strictly less than $\beta$ and $x\in [0,1]$, $|f^{(r)}(x)|\le c_{\beta, L}$. 
\end{lemma}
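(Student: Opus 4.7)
The plan is to combine the mean value theorem for divided differences with the H\"older continuity of $f^{(k)}$ to first bound $\|f^{(k)}\|_\infty$, and then to propagate this bound downward to $\|f^{(r)}\|_\infty$ for $1 \le r \le k-1$ via the mean value theorem. Note first that if $\beta \le 1$, then $k = 0$ and the range $1 \le r \le k$ is empty, so the lemma is vacuous. Hence assume $k \ge 1$ in what follows.

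For the first step, I would invoke the mean value theorem for divided differences: for the $k+1$ equally spaced nodes $x_i = i/k$, $0 \le i \le k$, there exists $\xi \in [0,1]$ with
\begin{equation}
\frac{f^{(k)}(\xi)}{k!} \;=\; \sum_{i=0}^{k} \frac{f(i/k)}{\prod_{j\ne i}(i/k - j/k)}.
\end{equation}
Each denominator $\prod_{j\ne i}(i/k - j/k)$ is a fixed nonzero constant depending only on $k$, and $|f(i/k)| \le 1$ by hypothesis, so the right-hand side is bounded in absolute value by a constant $M_k$ depending only on $k$. Combining this with the H\"older continuity of $f^{(k)}$, which gives $|f^{(k)}(x) - f^{(k)}(\xi)| \le L|x-\xi|^{\beta-k} \le L$ for all $x \in [0,1]$, yields
\begin{equation}
\|f^{(k)}\|_\infty \;\le\; k!\,M_k + L \;=:\; C_k,
\end{equation}
a bound depending only on $\beta$ and $L$.

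For the second step, I would perform a downward induction on $r = k-1, k-2, \ldots, 1$. Assuming $\|f^{(r+1)}\|_\infty \le C_{r+1}$ has already been established, the same divided-difference identity applied to $r+1$ equally spaced nodes yields some $\xi_r \in [0,1]$ with $|f^{(r)}(\xi_r)| \le r!\, M_r$ for a constant $M_r$ depending only on $r$. The mean value theorem then gives
\begin{equation}
|f^{(r)}(x)| \;\le\; |f^{(r)}(\xi_r)| + |x - \xi_r|\cdot \|f^{(r+1)}\|_\infty \;\le\; r!\, M_r + C_{r+1} \;=:\; C_r
\end{equation}
for every $x \in [0,1]$. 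Setting $c_{\beta, L} := \max_{1 \le r \le k} C_r$ completes the proof.

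I do not anticipate any serious obstacle: the argument is standard, and the only subtlety is recognizing that the mean value theorem for divided differences (together with boundedness of $f$) is exactly the tool needed to pin down $f^{(k)}$ at a single interior point, after which H\"older continuity promotes this to a uniform bound. The bookkeeping of constants is routine and depends only on $k$ (hence on $\beta$) and $L$, as required.
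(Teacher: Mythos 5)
Your proof is correct, and it follows the same two-step outline as the paper's argument: (1) establish a pointwise bound on $f^{(r)}$ at some interior point using only $|f|\le 1$, and then (2) promote this to a uniform bound — via the H\"older continuity of $f^{(k)}$ when $r=k$, and via the mean value theorem together with the already-established bound on $f^{(r+1)}$ when $r<k$. The genuine difference lies in how step (1) is executed. The paper builds the pointwise bound by hand: it applies the ordinary mean value theorem iteratively across a nested family of $2^k$ nodes taken from the $k$-th step of the Cantor set construction, relying on the $3^{-k}$ spacing to keep each pair of intermediate points well separated, and it works this out explicitly only for $k=2$ and $k=3$ before sketching the general case. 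You instead invoke the mean value theorem for divided differences on $k+1$ equally spaced nodes, which packages exactly that iterated differencing into a single known identity. This is cleaner: it uses linearly many rather than exponentially many nodes, yields an explicit constant $M_k$ immediately from the Lagrange weights $\prod_{j\ne i}(i/k - j/k)$, and replaces a lengthy ad hoc argument with a cited theorem. The paper's version is more elementary and self-contained (it avoids appealing to the divided-differences MVT), at the cost of length and opacity for general $k$. Both routes are valid; yours is the shorter and more standard one.
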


\begin{proof}
The first step of the proof is to establish the lemma for the case $r = k$. To this end, we first show that there exists a constant $c_{\beta} > 0$, depending only on $\beta$, such that for any function $f \in \Sigma(\beta, L)$ with $|f| \le 1$, there exists a point $\eta_k \in [0, 1]$ satisfying $|f^{(k)}(\eta_k)| \le c_{\beta}$. Since the proof in the general case involves more intricate notation, we provide detailed arguments for the cases $k = 2$ and $k = 3$, and then present a proof sketch for the general case.

The case $k=1$ is immediate from Lagrange's mean value theorem. For $k=2$, we consider points $a_1=0, a_2=\frac{1}{3}, a_3=\frac{2}{3}, a_4=1$ and apply the mean value theorem twice to quotient
\begin{align*}
    \abs{\frac{\frac{f(a_1)-f(a_2)}{a_1-a_2} - \frac{f(a_3)-f(a_4)}{a_3-a_4}}{a_2-a_3}}.
\end{align*}
By the mean value theorem, $\frac{f(a_1)-f(a_2)}{a_1-a_2}=f'(\xi_{12})$ for some $\xi_{12}\in [a_1,a_2]$ and $\frac{f(a_3)-f(a_4)}{a_3-a_4}=f'(\xi_{34})$ for some  $\xi_{34}\in [a_3,a_4]$. Since $|f|\le 1$, we have 
\begin{align*}
    |f'(\xi_{12})|\le \frac{|f(a_1)|+|f(a_2)|}{|a_1-a_2|}\le \frac{2}{1/3}=6
\end{align*}
and similarly $|f'(\xi_{34})|\le 6$.

Note that $|\xi_{12}-\xi_{34}|\ge |a_2-a_3|= \frac{1}{3}$. We have that this quotient can be lower-bounded by
\begin{align*}
    \abs{\frac{f'(\xi_{12})-f'(\xi_{34})}{a_2-a_3}}= \abs{\frac{f'(\xi_{12})-f'(\xi_{34})}{\xi_{12}-\xi_{34}}\frac{\xi_{12}-\xi_{34}}{a_2-a_3}}\ge\abs{f''(\xi_{1234})},
\end{align*}
for some $\xi_{1234}\in [0,1]$. It follows that 
\begin{align*}
\abs{f''(\xi_{1234})}\le \frac{6 \times 2}{1/3}=36.    
\end{align*}
Hence, by setting $c_\beta= 36$, this shows that for $k=2$, any for any $f \in \Sigma(\beta, L)$, there exists $\eta_2$ such that $\abs{f''(\eta_2)} \leq c_\beta$. 

To prove a similar result for $k=3$, we have to construct more intervals. Hence, in this case we consider  $a_1=0, a_2=\frac{1}{9}, a_3=\frac{2}{9},a_4=\frac{3}{9},a_5=\frac{6}{9},a_6=\frac{7}{9},a_7=\frac{8}{9}$, and $a_8=1$. To be able to find an upper bound for the third derivative, we consider the following ratios:
\begin{align*}
\left|\frac{\frac{f(a_4)- f(a_3)}{a_4-a_3} - \frac{f(a_2)-f(a_1)}{a_2-a_1} }{a_3-a_2} \right| \quad \text{and} \quad \left|\frac{\frac{f(a_8)- f(a_7)}{a_8-a_7} - \frac{f(a_6)- f(a_5)}{a_6-a_5}}{a_7-a_6}\right|.
\end{align*}

Using an argument similar to the one we used for the case $k=2$, we can prove that there exists $\xi_{1234}\in [a_1,a_4]$ such that  $|f''(\xi_{1234})|\le 3^2\cdot 3^2 \cdot 4=324$. Similarly, there exists $\xi_{5678}\in [a_5,a_8]$, such that $|f''(\xi_{5678})|\le 3^2\cdot 3^2 \cdot 4=324$. Notice that by construction, $|\xi_{1234}- \xi_{5678}|\ge |a_4-a_5|\ge \frac{1}{3}$. Hence, we have that there exists $\xi_{12345678}\in [\xi_{1234},\xi_{5678}]$ such that 
\begin{align}\label{eq:thirdtermbound}
\left| \frac{f''(\xi_{5678}) - f''(\xi_{1234})}{a_5-a_4} \right| \geq \left| \frac{f''(\xi_{5678}) - f''(\xi_{1234})}{\xi_{5678}-\xi_{1234}}\right| =|f'''(\xi_{12345678})|, 
\end{align}
where the last equality is the result of the mean value theorem. 

Using \eqref{eq:thirdtermbound}, it is straightforward to prove that: 
\begin{align*}
|f'''(\xi_{12345678})|\le 324 \times 2 \times 3=2^3\times 3^3.
\end{align*}
Hence, in the case $k=3$, if we set $c_{\beta, L} = 2^3\times 3^3$, we have shown the existence of $\eta_3$ for which 
$|f'''(\eta_3)| \leq c_{\beta, L}$. Note that the bound is solely determined by our division of $[0,1]$ and the only properties we have used about $f$ are the existence of its derivatives and and the uniform bound $|f|\le 1$. 

For the general $k$-th derivative case, let $a_1<\cdots<a_{2^k}$ be the end points of the intervals in the $k$-th step construction of the Cantor set. Applying the mean value theorem inductively on these end points like in the case of $k=2$ and $k=3$ and observe that any two end points are at least $\frac{1}{3^k}$ apart, one can show there exist $c_{\beta}>0$ ($c_{\beta}$ is a constant that only depends on $\beta$, and hence $k$) such that  for every $f \in \Sigma(\beta, L)$, there exists $\eta_k$ such that $|f^{(k)}(\eta_k)|\le c_{\beta}$. 

So far, we have only proved the boundedness of $f^k(\cdot)$ at a particular point we called $\eta_k$ that can depend on the choice of $f$. The second step of the proof of \cref{lem: all derivatives} is to use this fact to show that $f^k(\cdot)$ is uniformly bounded by a constant independent of the choice of $f$. Towards this goal, note that since $f\in \Sigma(\beta, L)$ we have
\begin{align}
    |f^{(k)}(x)-f^{(k)}(\eta_k)|\le L|x-\eta_k|^{\beta-k}\le L
\end{align}
and thus $|f^{(k)}(x)|\le L+c_{\beta}$ for any $x\in [0,1]$.

So far we have proved that $|f^{(k)}(x)| \leq c_{\beta, L}$. Now we have to prove the result for $r= k-1, k-2, \ldots, 1$. Note that since we have proved the boundedness of $|f^{(k)}(x)|$ we can conclude by using the mean value theorem that:
\begin{align*}
    |f^{(k-1)}(x) - f^{(k-1)}(y)| \leq c_{\beta, L} |x-y|. 
\end{align*}
Now, to prove that there exists $c_{\beta, L}$ such  that $|f^{(k-1)}(x)| \leq c_{\beta, L}$, we follow the same steps as the ones we followed for bounding $f^{(k)}(\cdot)$: 
\begin{enumerate}
\item Proving that there exists $c_{\beta, L}$ such that for every $f \in \Sigma(\beta, L)$ there exists an $\eta$ for which $|f^{(k-1)}(\eta)| \leq c_{\beta,L}$.

\item We use the fact that $|f^{(k-1)}(x) - f^{(k-1)}(y)| \leq c_{\beta, L} |x-y|$ in conjunction with the Step 1 to prove the uniform boundedness of  $f^{(k-1)}(x)$. 
\end{enumerate}
The argument above shall be used inside an inductive argument to finish the proof for  $r=k-2,...,1$.  
  
\end{proof}

\begin{lemma}\label{lem: product is holder}
There exists a constant $C_{\beta, L}$, such that for all functions $f$ and $g$ that satisfy $|f|\le 1$, $|g|\le 1$ and $f,g\in \Sigma(\beta, L)$, we have $fg\in \Sigma(\beta, C_{\beta, L})$.  
\end{lemma}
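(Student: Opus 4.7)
The plan is to combine Leibniz's product rule with Lemma \ref{lem: all derivatives} and the telescoping identity $AC - BD = A(C - D) + D(A - B)$. Let $k$ be the largest integer strictly less than $\beta$, so that $\beta - k \in (0, 1]$. I would first dispose of the easy case $k = 0$: here the condition is simply $|f(x)g(x) - f(y)g(y)| \le |f(x)||g(x) - g(y)| + |g(y)||f(x) - f(y)| \le 2L|x - y|^{\beta}$, using $|f|, |g| \le 1$, so that $C_{\beta, L} = 2L$ works.

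For $k \ge 1$, the functions $f, g$ are $k$-times differentiable, so Leibniz's rule gives
\begin{align*}
(fg)^{(k)}(x) = \sum_{j=0}^{k} \binom{k}{j} f^{(j)}(x)\, g^{(k-j)}(x).
\end{align*}
For any $x, y \in [0, 1]$, I would expand each summand as
\begin{align*}
f^{(j)}(x) g^{(k-j)}(x) - f^{(j)}(y) g^{(k-j)}(y)
&= f^{(j)}(x)\br{g^{(k-j)}(x) - g^{(k-j)}(y)} \\
&\quad + g^{(k-j)}(y)\br{f^{(j)}(x) - f^{(j)}(y)}.
\end{align*}

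Next, I would supply the two ingredients needed to bound these terms uniformly. By Lemma \ref{lem: all derivatives} combined with the hypothesis $|f|, |g| \le 1$, there is a constant $M_{\beta, L}$ such that $|f^{(r)}(x)|, |g^{(r)}(x)| \le M_{\beta, L}$ on $[0, 1]$ for every $0 \le r \le k$. For the modulus of continuity of each derivative I would distinguish two cases. When $r = k$, the defining property of $\Sigma(\beta, L)$ gives $|f^{(k)}(x) - f^{(k)}(y)| \le L|x - y|^{\beta - k}$ directly, and analogously for $g$. When $0 \le r < k$, the bound $|f^{(r+1)}| \le M_{\beta, L}$ together with the mean value theorem yields the Lipschitz estimate $|f^{(r)}(x) - f^{(r)}(y)| \le M_{\beta, L}|x - y|$; since $|x - y| \le 1$ and $\beta - k \le 1$, this upgrades to $|f^{(r)}(x) - f^{(r)}(y)| \le M_{\beta, L}|x - y|^{\beta - k}$.

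Inserting these uniform and Hölder bounds into each summand of the Leibniz expansion shows that every term is bounded by some constant depending only on $\beta$ and $L$ times $|x - y|^{\beta - k}$. Summing over $j = 0, 1, \ldots, k$ absorbs the binomial coefficients into a single constant $C_{\beta, L}$ and yields
\begin{align*}
\abs{(fg)^{(k)}(x) - (fg)^{(k)}(y)} \le C_{\beta, L}\, |x - y|^{\beta - k},
\end{align*}
which is the required membership $fg \in \Sigma(\beta, C_{\beta, L})$. I expect no substantive obstacle beyond careful bookkeeping of constants; the heart of the argument is the uniform boundedness of all lower-order derivatives, which Lemma \ref{lem: all derivatives} has already established, and the key observation that on $[0, 1]$ Lipschitz continuity implies $(\beta - k)$-Hölder continuity since $\beta - k \le 1$.
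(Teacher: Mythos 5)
Your proposal is correct and takes essentially the same route as the paper's proof: Leibniz's rule, the telescoping split of each product term, uniform boundedness of the lower-order derivatives from \cref{lem: all derivatives} combined with the mean value theorem, and the observation that on $[0,1]$ a Lipschitz bound upgrades to a $(\beta-k)$-H\"older bound since $\beta-k\le 1$. The only cosmetic differences are your explicit handling of the $k=0$ case and your unified case split at $r=k$, where the paper instead treats the middle terms and the two extreme terms $f^{(k)}g$ and $g^{(k)}f$ separately.
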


\vspace{3mm}
\begin{proof}
 By Leibnitz's rule for derivatives, we have
\begin{align*}
    (fg)^{(k)}(x)=\sum_{r=0}^k {k\choose r}f^{(k-r)}(x) g^{(r)}(x)
\end{align*}
Now for $1\le r\le k-1$
\begin{align*}
\begin{split}
        & \abs{f^{(k-r)}(x) g^{(r)}(x)-f^{(k-r)}(y) g^{(r)}(y)}  \\
\le & \abs{f^{(k-r)}(x) g^{(r)}(x)-f^{(k-r)}(y) g^{(r)}(x)}+\abs{f^{(k-r)}(y) g^{(r)}(x)-f^{(k-r)}(y) g^{(r)}(y)}  \\
\overset{(a)}{\le} & c_{\beta, L}\abs{f^{(k-r)}(x) -f^{(k-r)}(y) }+c_{\beta, L} \abs{ g^{(r)}(x)- g^{(r)}(y)}   \\
\overset{(b)}{\le} & 2c_{\beta,L}^2|x-y|\le 2c_{\beta,L}^2 |x-y|^{\beta-k},
\end{split}
\end{align*}
where to obtain Inequality (a), we have used \cref{lem: all derivatives} that shows $|g^{(r)}(x)|< c_{\beta, L}$ and $|f^{(k-r)}(y)| < c_{\beta, L}$. To obtain the inequality (b) we have used the mean value theorem combined with \cref{lem: all derivatives}. To obtain the last inequality we have used the fact that $0\leq\beta-k<1$. Next we study $f^{(k)}(x)g(x)$. 

\begin{align*}
    &\abs{f^{(k)}(x)g(x)-f^{(k)}(y)g(y)} \\
\le & \abs{f^{(k)}(x) g(x)-f^{(k)}(y) g(x)}+\abs{f^{(k)}(y) g(x)-f^{(k)}(y) g(y)}  \\
\le & 2L c_{\beta, L} |x-y|^{\beta-k}.
\end{align*}
Similarly, we have
\begin{align*}
    &\abs{g^{(k)}(x)f(x)-g^{(k)}(y)f(y)}
\le 2L|x-y|^{\beta-k}.
\end{align*}
Setting $C_{\beta,L}:=2^k c_{\beta,L}^2 + 2c_{\beta, L}L$ and using triangular inequality complete the proof.
\end{proof}

Another component we will need in our main proofs, is the following definition and construction borrowed from Section 2.4 of \cite{Mainbook}.

\begin{defn}\label{def:basicfunction}
For the function class $\Sigma(\beta, L)$, we call a function $\phi: \mathbb{R} \rightarrow \mathbb{R}$ a \emph{basic function} if and only if it satisfies the following four properties:

(a) $\phi$ is infinitely differentiable on $\mathbb{R}$.

(b) $\phi(x)=0$ if $x \notin\left(-\frac{1}{2}, \frac{1}{2}\right)$.

(c) $\phi(x)>0$ if $x \in\left(-\frac{1}{2}, \frac{1}{2}\right)$.

(d) $\max _x\left|\phi^{(k+1)}(x)\right| \leq L$, where $k$ is the largest integer strictly less than $\beta$.
\end{defn}

\vspace{5mm}
Now we construct an explicit basic function for $\Sigma(\beta, L/2)$ that will be used throughout this paper. Consider 
\begin{equation}
    \phi_0(x):=\left\{\begin{array}{cc}
\exp \left(\frac{4}{4x^2 -1}\right), & |x| < \frac{1}{2}, \\
0, & \text{otherwise}.
\end{array}\right.
\end{equation}
It is easy to see that $\phi_0$ satisfies the conditions (a), (b) and (c). If we define the function
\begin{align}\label{explicit construction of basic function}
    \phi(x):=\frac{L}{2} \phi_0(x)/ \overline{\phi_0},
\end{align}
with $\overline{\phi_0} := \max_x |\phi_0^{(k+1)}(x)|$,  then $\phi(x)$  will  satisfy condition (d) as well. Hence, $\phi$ is a basic function for $\Sigma(\beta, L/2)$.

\subsection{Proof of the lower bound  of \cref{l_2}}\label{roadmap:lb}

\quad

For the lower bound, we need to show $ \sup_{f \in \Sigma_{\mathfrak{h}}(\beta, L)} \mathbb{E}_f \|f- \hat{\nu}\|_2$ is large for all estimators $\hat{\nu}$. It is clear that $$ \max_{\nu \in \{\nu_0,\nu_1, \ldots \nu_M\} \subseteq \Sigma_{\mathfrak{h}}(\beta, L)} \mathbb{E}_{\nu} \|\nu - \hat{\nu}\|_2 \leq \sup_{f \in \Sigma_{\mathfrak{h}}(\beta, L)} \mathbb{E}_f \|f- \hat{\nu}\|_2,$$
for any $\{\nu_0, \nu_1, \ldots \nu_M\} \subseteq \Sigma_{\mathfrak{h}}(\beta, L)$. Therefore, it is sufficient to construct $M$ functions, $\nu_0, \nu_1, \ldots, \nu_M \in \Sigma_{\mathfrak{h}}(\beta, L)$ such that for all estimators $\hat{\nu}$, $\mathbb{E}_{\nu_j} \|\nu_j - \hat{\nu}\|_2$ is large for at least one $\nu_j$. In order to obtain a lower bound for $\mathbb{E}_{\nu_j} \|\nu_j - \hat{\nu}\|_2$, we can use Markov inequality and obtain
\[
\mathbb{E} \|\nu_j -\hat{\nu}\|_2 \geq t \cdot \mathbb{P}(\|\nu_j- \hat{\nu}\|_2>t ),
\]
which holds for every $t>0$. Hence, it suffices to construct functions $\nu_0, \nu_1,..., \nu_M$ in $\Sigma_{\mathfrak{h}}(\beta, L)$ such that for all estimators $\hat{\nu}$, there exists at least one $\nu_j$ for which $ \mathbb{P}(\|\nu_j- \hat{\nu}\|_2>t )$   is bounded away from zero. It is straightforward to see that the larger $\mathbb{P}_{\nu_j}\left( \|\nu_j - \hat{\nu}\|_2 > t \right) $ is, the larger the lower bound will be. Hence the problem of finding a good lower bound reduces to the problem of choosing $M$ and $\nu_0, \nu_1, \nu_2, \ldots, \nu_M$. 

Our next lemma which is a restatement of Theorem 2.5.3 in \cite{Mainbook}, clarifies what properties we expect the functions to satisfy, and what lower bound we can obtain for $\max_{0\leq j\leq M}\mathbb{P}_{\nu_j}(\|\hat{\nu}- \nu_j\|_2\geq t)$. Recall that the notation of likelihood ratio $\Lambda(\nu_0, \nu_1)$ is defined in \cref{lr:def}.

\begin{lemma}\label{thm1}
    Suppose $\nu_0 , \nu_1, \ldots, \nu_M$ are real-valued functions such that the following conditions hold:
    
    (i) For all $0 \leq j \neq k \leq M$, $d(\nu_j,\nu_k) \geq 2 s_n >0$, where $d$ is a distance on $\Sigma(\beta, L)$. (For our current proof, assume $d$ is given by $L_2$ norm.) 
    
    (ii) For all $j \in \{1,2,\ldots, M\}$, there exists $\lambda < 1$ such that $\lambda_j < \lambda$ and we have $$\mathbb{P}_{\nu_j}(\Lambda(\nu_0, \nu_j) > M^{-\lambda_j}) > p^* ,$$ where $p^*$ is independent of $n$, $j$, and $\Lambda (\nu_0, \nu_j)$ is defined in \cref{lr:def}. Then, for every estimator $\hat{\nu}_n$, we have 
    $$\max_{0\leq j\leq M}\mathbb{P}_{\nu_j}(d(\hat{\nu}_n, \nu_j)\geq s_n)\geq p^*/2.$$
\end{lemma}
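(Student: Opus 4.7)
The plan is to follow the standard reduction from estimation to multiple hypothesis testing (as in Chapter 2 of Tsybakov). First, given any estimator $\hat{\nu}_n$, I would define the associated test
\[
\psi^* := \arg\min_{0 \leq j \leq M} d(\hat{\nu}_n, \nu_j),
\]
breaking ties arbitrarily. The separation hypothesis (i) combined with the triangle inequality gives that on the event $\{d(\hat{\nu}_n, \nu_j) < s_n\}$ we must have $\psi^* = j$, so
\[
\mathbb{P}_{\nu_j}(d(\hat{\nu}_n, \nu_j) \geq s_n) \;\geq\; \mathbb{P}_{\nu_j}(\psi^* \neq j).
\]
It therefore suffices to show $\max_{0\le j\le M}\mathbb{P}_{\nu_j}(\psi^* \neq j) \geq p^*/2$ for any test $\psi^*$ with values in $\{0,1,\ldots,M\}$.

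Next I would apply the change-of-measure identity based on the likelihood ratio \eqref{lr:def}. For any $j\in\{1,\ldots,M\}$ and any measurable set $B$,
\[
\mathbb{P}_{\nu_0}(B) = \mathbb{E}_{\nu_j}\!\left[\Lambda(\nu_0,\nu_j)\,\mathbbm{1}_B\right].
\]
Restricting to the event $A_j := \{\Lambda(\nu_0,\nu_j) > M^{-\lambda_j}\}$ with $B = \{\psi^* = j\}$ gives
\[
\mathbb{P}_{\nu_0}(\psi^* = j) \;\geq\; M^{-\lambda_j}\,\mathbb{P}_{\nu_j}(\psi^*=j,\,A_j) \;\geq\; M^{-\lambda}\,\mathbb{P}_{\nu_j}(\psi^*=j,\,A_j),
\]
using $\lambda_j < \lambda$. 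Summing over $j=1,\ldots,M$ and using $\sum_{j=1}^M \mathbb{P}_{\nu_0}(\psi^* = j) \leq 1$ yields
\[
\sum_{j=1}^M \mathbb{P}_{\nu_j}(\psi^* = j,\,A_j) \;\leq\; M^{\lambda}.
\]

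To convert this into a lower bound on the testing error, I would use the elementary inclusion–exclusion
\[
\mathbb{P}_{\nu_j}(\psi^*=j,\,A_j) \;\geq\; \mathbb{P}_{\nu_j}(A_j) - \mathbb{P}_{\nu_j}(\psi^* \neq j) \;\geq\; p^* - \mathbb{P}_{\nu_j}(\psi^* \neq j),
\]
where the last step invokes hypothesis (ii). Summing and rearranging gives $\sum_{j=1}^M \mathbb{P}_{\nu_j}(\psi^* \neq j) \geq M p^* - M^{\lambda}$, and passing to the maximum yields
\[
\max_{0\le j\le M} \mathbb{P}_{\nu_j}(\psi^* \neq j) \;\geq\; \frac{1}{M}\sum_{j=1}^M \mathbb{P}_{\nu_j}(\psi^* \neq j) \;\geq\; p^* - M^{\lambda - 1}.
\]
Since $\lambda < 1$ is fixed and independent of $n$, the term $M^{\lambda - 1}$ can be made smaller than $p^*/2$ by taking $M$ sufficiently large, which in the applications is guaranteed by the Gilbert–Varshamov construction (\cref{GV}) producing exponentially many hypotheses.

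The argument is essentially bookkeeping once the three ingredients (triangle-inequality reduction, change of measure, and $\max\ge$ average) are in place; the only subtlety I anticipate is making sure the direction of the likelihood ratio $\Lambda(\nu_0,\nu_j)=\mathbb{P}_{\nu_0}/\mathbb{P}_{\nu_j}$ is correctly handled when applying the change-of-measure identity, and being careful that $\lambda_j < \lambda < 1$ is used uniformly so that $M^{\lambda-1}\to 0$ as $M\to\infty$. No concentration or probabilistic obstacle arises in this lemma itself — the hypothesis on $\mathbb{P}_{\nu_j}(\Lambda > M^{-\lambda_j})$ is exactly what is needed for the change-of-measure step, and verifying that hypothesis in the speckle model will be the substantive task performed later in the proofs of \cref{l_2} and \cref{l_infty}.
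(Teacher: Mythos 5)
Your proof is correct and follows the standard Tsybakov-style reduction; the paper itself gives no proof of this lemma (it simply cites it as a restatement of a textbook result), so there is no internal argument to compare against. Your three steps — reduction to testing via the triangle inequality and the minimum-distance test $\psi^*$, change of measure $\mathbb{P}_{\nu_0}(B)=\mathbb{E}_{\nu_j}[\Lambda(\nu_0,\nu_j)\mathbbm{1}_B]$ restricted to the event $A_j$, and then ``max $\geq$ average'' — are exactly the ingredients of that classical proof, and each step is carried out correctly (in particular the direction of $\Lambda=\mathbb{P}_{\nu_0}/\mathbb{P}_{\nu_j}$ and the inequalities $M^{-\lambda_j}\geq M^{-\lambda}$ for $\lambda_j<\lambda$, $M>1$ are handled properly). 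One point worth being explicit about: you obtain $\max_j\mathbb{P}_{\nu_j}(d(\hat\nu_n,\nu_j)\geq s_n)\geq p^*-M^{\lambda-1}$, which yields the stated bound $p^*/2$ only once $M\geq(2/p^*)^{1/(1-\lambda)}$; the lemma as written in the paper omits this implicit largeness condition on $M$, but you correctly note it and observe that it is automatic in the applications, where $M$ is exponentially large by Gilbert--Varshamov. That is the right reading of the statement, and your proof is complete in the regime in which the lemma is actually used.
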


So, one of the main challenges of the proof is to construct the suitable choices of functions $\nu_0, \nu_1, \ldots, \nu_M$ that satisfy the assumptions in \cref{thm1}. To construct these functions, we use $\phi$ which is a basic function (see Definition \ref{def:basicfunction}) for  $\Sigma(\beta, L/2)$. Set
\begin{equation}\label{choice of delta}
    \delta_n = \pr{\frac{\max\pr{1,\sigma_n^4}}{n}}^{1 \over 2\beta+1} \quad \text{and} \quad m=[\delta_n^{-1}], 
\end{equation}
and define the bump functions
\begin{align}
f_{j,n}(x):=\delta_n^{\beta} \phi\left(\frac{x-b_j}{\delta_n}\right), \quad j=1, \ldots, m,
\end{align}
where
\begin{align}
 b_{j}=(2 j-1) \frac{\delta_n}{2}.  
\end{align}
Clearly, $f_{j,n}$'s have disjoint supports. Further more, we have

\begin{lemma}[Proposition 2.4.3 of \cite{Mainbook}]\label{f and f^2 holder}
  For $n\ge 1$, we have $f_{j,n}\in \Sigma(\beta,L/2)$.
\end{lemma}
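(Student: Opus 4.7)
The plan is to verify the H\"older bound defining $\Sigma(\beta, L/2)$ directly from the rescaled form of $f_{j,n}$. The chain rule yields
$$f_{j,n}^{(k)}(x) = \delta_n^{\beta - k}\, \phi^{(k)}\!\left(\frac{x - b_j}{\delta_n}\right),$$
where $k$ is the largest integer strictly less than $\beta$. Since property (d) of \cref{def:basicfunction} gives $\max_u |\phi^{(k+1)}(u)| \leq L/2$, the mean value theorem immediately furnishes the Lipschitz bound $|\phi^{(k)}(u) - \phi^{(k)}(v)| \leq (L/2)\,|u - v|$ for all $u, v \in \R$. This is the only analytic fact about $\phi$ that I would need.

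I would then bound $|f_{j,n}^{(k)}(x) - f_{j,n}^{(k)}(y)|$ for $x, y \in [0, 1]$ by splitting on the size of $|x - y|$ relative to $\delta_n$. In the core case $|x - y| \leq \delta_n$, the change of variable $u = (x - b_j)/\delta_n$ and $v = (y - b_j)/\delta_n$ gives $|u - v| \leq 1$; since $\beta - k \in (0, 1]$, the Lipschitz bound $|u - v|$ can be replaced by the weaker $|u - v|^{\beta - k}$, and the prefactor $\delta_n^{\beta - k}$ combines with the $\delta_n^{-(\beta - k)}$ produced by the substitution to yield exactly $(L/2)\,|x - y|^{\beta - k}$.

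For the remaining case $|x - y| > \delta_n$, I would exploit that $f_{j,n}$ has support of length $\delta_n$, so at least one of $x, y$ lies outside the support. If both are outside, the difference vanishes. Otherwise, say $x$ is inside and $y$ outside, and let $x'$ be the endpoint of the support closer to $y$. Then $x' \in [0, 1]$, $|x - x'| \leq \delta_n$, $|x - x'| \leq |x - y|$, and crucially $f_{j,n}^{(k)}(x') = 0$, because every derivative of the $C^\infty$ compactly supported $\phi$ vanishes at $\pm 1/2$. Reducing to the core case on the pair $(x, x')$ and using $|x - x'| \leq |x - y|$ then closes this case.

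The only mildly subtle point is the boundary-vanishing of all derivatives of $\phi$ at $\pm 1/2$, which relies on the $C^\infty$ smoothness in property (a) of the basic function; everything else is a routine rescaling computation, which is why this is essentially a restatement of Proposition~2.4.3 of \cite{Mainbook}.
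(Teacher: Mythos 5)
Your proof is correct and is essentially the standard rescaling argument behind the result the paper simply cites (Proposition 2.4.3 of \cite{Mainbook}) without reproducing a proof: the chain-rule identity $f_{j,n}^{(k)}(x)=\delta_n^{\beta-k}\phi^{(k)}\bigl((x-b_j)/\delta_n\bigr)$ plus the bound $\max|\phi^{(k+1)}|\le L/2$ from the construction in \cref{explicit construction of basic function}. Your explicit split into $|x-y|\le\delta_n$ and $|x-y|>\delta_n$ (using compact support and the vanishing of all derivatives of $\phi$ at $\pm 1/2$) is exactly the detail the bare scaling identity glosses over, since condition (d) only gives a Lipschitz, not a $(\beta-k)$-H\"older, bound on $\phi^{(k)}$ directly; so your write-up is, if anything, more complete than what the paper relies on.
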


\iffalse
\begin{proof}[Proof of \cref{f and f^2 holder}]

\textcolor{red}{I do not like the way the following proof is written. Please consider the different cases, and prove it carefully. } \textcolor{blue}{This proof is actually in the book, say page 93 of \cite{Nonp}. Eventually should be removed.}
    Recall the definition of $\phi_0$ as defined in \cref{explicit construction of basic function} and $\phi(x)=\frac{L}{2} \phi_0(x)/ \overline{\phi_0}$, with $\overline{\phi_0} := \max_x |\phi_0^{(k+1)}(x)|$. Then 
    \begin{align}
        \begin{split}
         & \abs{\frac{d^k}{dx^k}{f_{i,n}}(x)-\frac{d^k}{dy^k}{f_{j,n}}(y)}
        \le \frac{L}{2}\cdot  \de_n^{\beta-k}\cdot \abs{\frac{x-b_j}{\de_n}-\frac{y-b_j}{\de_n}}^{\beta-k}
        = \frac{L}{2} \abs{x-y}^{\beta-k}.
        \end{split}
    \end{align}
    This completes the proof of the lemma.
\end{proof}
\fi

 By combining these bump functions, we aim to construct the functions that are required in \cref{thm1}. Towards this goal, for $l=1, \ldots, 2^{m}$, consider $\omega_{l} \in\{0,1\}^{m}$ and define 
\begin{equation}\label{eq:g_def}
g_{l}(x):=1-\sum_{j=1}^{m}\left(w_{l}\right)_{j} f_{j,n}(x).
\end{equation}
In order to construct $\nu_0 , \nu_1, \ldots, \nu_M$, we will pick $M$ of these functions that satisfy certain properties that we will clarify later. We first observe that
\begin{lemma}\label{belonging}
For sufficiently large $n$ and $l=1, \ldots, 2^{m}$, $g_l \in  \Sigma_{\mathfrak{h}}(\beta, L)$.
\end{lemma}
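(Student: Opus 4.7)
The plan is to verify the two defining properties of $\Sigma_{\mathfrak{h}}(\beta, L)$ for each $g_l$: the $\beta$-H\"older smoothness with constant $L$, and the pointwise sandwich $\mathfrak{h} \leq g_l(x) \leq 1$.

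For the H\"older part, I would rely on the fact that by \cref{f and f^2 holder} each $f_{j,n} \in \Sigma(\beta, L/2)$, together with the crucial disjointness of the supports $(b_j - \delta_n/2,\, b_j + \delta_n/2)$. Since $\phi$ is $C^{\infty}$ and vanishes outside $(-1/2,1/2)$, all derivatives $f_{j,n}^{(r)}$ vanish outside the open support and at its boundary. Hence, at each $x$ at most one term in $g_l^{(k)}(x) = -\sum_j (w_l)_j f_{j,n}^{(k)}(x)$ is nonzero (the constant $1$ only affects the zeroth derivative). To verify the H\"older condition on the $k$-th derivative I would split into three cases according to where $x,y$ lie relative to the supports: (a) both in the same support, where the bound follows directly from $f_{j,n}\in \Sigma(\beta,L/2)$; (b) both outside all supports, where $g_l^{(k)}\equiv 0$; (c) the mixed case (either in distinct supports, or one inside and one outside), where I insert a point $c$ lying in the gap between the relevant supports so that $f_{j,n}^{(k)}(c) = 0 = f_{j',n}^{(k)}(c)$. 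Then, using the triangle inequality together with $|x-c|,|c-y|\le |x-y|$, one obtains
\[
|g_l^{(k)}(x) - g_l^{(k)}(y)| \;\le\; \tfrac{L}{2}|x-c|^{\beta-k} + \tfrac{L}{2}|c-y|^{\beta-k} \;\le\; L\,|x-y|^{\beta-k}.
\]
The additive constant $1$ does not alter derivatives of order $\ge 1$, and in the case $k=0$ it does not affect the H\"older constant either; thus $g_l\in \Sigma(\beta,L)$.

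For the pointwise bounds, $g_l(x)\le 1$ is immediate because $(w_l)_j\in\{0,1\}$ and $f_{j,n}\ge 0$. For the lower bound, disjointness of supports again implies that at every $x$ only a single $f_{j,n}$ is nonzero, so
\[
g_l(x) \;\ge\; 1 - \max_j \|f_{j,n}\|_{\infty} \;=\; 1 - \delta_n^{\beta}\,\|\phi\|_{\infty}.
\]
From the choice $\delta_n = (\max(1,\sigma_n^4)/n)^{1/(2\beta+1)}$, the lower-bound construction is only informative in the regime where $\delta_n\to 0$ (otherwise the $\min\{1,\cdot\}$ in \cref{l_2} makes the claim trivial). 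In this regime, for all sufficiently large $n$ we have $\delta_n^{\beta}\|\phi\|_{\infty}\le 1-\mathfrak{h}$, which yields $g_l(x)\ge \mathfrak{h}$.

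The main obstacle in the argument is the mixed-support case of the H\"older condition; the critical observation is that all derivatives of $f_{j,n}$ vanish at the boundary of its support, which permits the insertion of a common zero point $c$ between two active bumps. The price of this splitting is a factor of two, which is exactly why the bump height was calibrated using the basic function for $\Sigma(\beta, L/2)$ rather than $\Sigma(\beta, L)$, leaving the required slack to land inside $\Sigma(\beta,L)$.
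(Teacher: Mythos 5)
Your proposal is correct and follows essentially the same route as the paper: verify the sandwich $\mathfrak{h}\le g_l\le 1$ via disjoint supports, then check the H\"older condition on $g_l^{(k)}$ by casework on where $x,y$ fall relative to the bump supports, using that each $f_{j,n}\in\Sigma(\beta,L/2)$ leaves a factor-of-two margin. The only cosmetic difference is that in the mixed-support case you insert an intermediate zero point $c$ and use $|x-c|,|c-y|\le|x-y|$, whereas the paper directly writes $|g_l^{(k)}(x)-g_l^{(k)}(y)|\le|f_{j_x,n}^{(k)}(x)-f_{j_x,n}^{(k)}(y)|+|f_{j_y,n}^{(k)}(x)-f_{j_y,n}^{(k)}(y)|$ using the vanishing of each $f_{j,n}^{(k)}$ off its own support; both yield the same bound, and your treatment of the ``outside all supports'' sub-case makes the casework slightly more complete.
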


The proof of this lemma is presented in \cref{proofl_2}. Now, define $\nu_{0} \equiv 1$ and $\nu_{1}, \ldots, \nu_{M}$ to be elements in $\left\{g_{l}: l=1, \ldots, 2^{m}\right\}$ such that for the corresponding elements in $\{0,1\}^{m}$, denoted $\omega_{1}, \ldots, \omega_{M}$, we have $\rho\left(\omega_{i}, \omega_{j}\right) \geq \frac{m}{16}$ for all $i, j \in\{1, \ldots, M\}$, and each $\omega_i$ has at least $m/16$ nonzero entries.
We fix $M$ to be $[2^{m/8}]$ which is guaranteed by \cref{GV}. It is straightforward to confirm that for all $0 \leq j \neq k \leq M$, $$ \|\nu_j-\nu_k\|_{2} \geq \sqrt{\frac{m}{16}}\|f_{1,n}\|_2 = \sqrt{\frac{m}{16}} \delta_n^{\beta+1/2}\|\phi\|_2 = \frac{\delta_n^{\beta} \|\phi\|_2}{4}.$$

The likelihood ratio required in  \cref{thm1} is given by (for simplicity we denote $\prod_{i}=\prod_{i}^n$ and $\sum_i=\sum_{i=1}^n$)
$$
\begin{aligned}
& \Lambda\left(\nu_{0}, \nu_{l}\right)=\prod_{i} \sqrt{\frac{{\sigma}^2_n +  \nu_{l}^{2}\left(i/n\right)}{1+ {\sigma}^2_n}} \exp \left[\sum_i \frac{y_{i}^{2}}{2 ({\sigma}^2_n + \nu_{l}^{2}\left(i/n\right))}-\sum_i \frac{y_{i}^{2}}{2(1+{\sigma}^2_n)}\right] \\
& =\exp \left[\sum_{i} \frac{y_{i}^{2}}{2}\left( \frac{1}{{\sigma}^2_n + \nu_{l}^{2}\left(i/n\right)}- \frac{1}{1+ {\sigma}^2_n}\right)+\frac{1}{2}\sum_{i} \log \frac{\sigma_n^2+\nu_{l}^{2}\left(i/n\right)}{\sigma_n^2+1}\right]\\
& =\exp \left[\sum_{i} \frac{y_{i}^{2}}{2(1 + {\sigma}^2_n)}\left( \frac{\sigma_n^2+1}{\sigma_n^2+\nu_{l}^{2}\left(i/n\right)}- 1\right)+\frac{1}{2}\sum_{i} \log \frac{\sigma_n^2+\nu_{l}^{2}\left(i/n\right)}{\sigma_n^2+1}\right].
\end{aligned}
$$

To apply \cref{thm1}, we need to show the following result:
% \textcolor{red}{So far we have kept the choice of $\alpha$ general. We have to fix it before this lemma. }

\begin{lemma}\label{lem:likelihoodratio}
There exists some $\lambda< 1$, such that for all $l$,
$$\mathbb{P}_{\nu_{l}}\left(\Lambda\left(\nu_{0}, \nu_{l}\right)> M^{-\lambda_l}\right)>p^{*},$$
where $\lambda_l< \lambda$ and the functions $\nu_l$ are defined earlier.
%\[
%\nu_l = 1- \sum_{j=1}^m (\omega_l)_j f_{j,n},
%\]
%where $\omega_l \in \{0,1\}^m$. Furthermore, assume that $\|\nu_0- \nu_\ell\|\geq \frac{\delta_n^{\beta}}{4}$.
%Then, 
%$$\mathbb{P}_{\nu_{l}}\left(\Lambda\left(\nu_{0}, \nu_{l}\right)> M^{-\lambda_l}\right)>p^{*},$$ and $\lambda_l< \lambda$ for some $\lambda< 1$.
\end{lemma}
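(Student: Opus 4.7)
The plan is to show that, viewed under $\mathbb{P}_{\nu_l}$, the random variable $\log\Lambda(\nu_0,\nu_l)$ concentrates around its expectation $-D_{\rm KL}(\nu_l\|\nu_0)$ with constant probability, and that the Kullback-Leibler divergence itself is bounded by $\lambda'\log M$ for some $\lambda'<1$. Combining these two ingredients via Chebyshev's inequality will yield the desired lower bound on $\Lambda(\nu_0,\nu_l)$ and hence the probability bound in the lemma.

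First, I would rewrite the log-likelihood ratio in a tractable form. Under $\mathbb{P}_{\nu_l}$, each $y_i$ is centered Gaussian with variance $\sigma_n^2+\nu_l^2(i/n)$, so $y_i^2=(\sigma_n^2+\nu_l^2(i/n))Z_i^2$ with $Z_i\overset{iid}{\sim}N(0,1)$. Substituting into the explicit formula for $\Lambda(\nu_0,\nu_l)$ displayed just before the lemma and introducing
$$a_i:=\frac{1-\nu_l^2(i/n)}{1+\sigma_n^2}\in [0,1),$$
a direct algebraic simplification (using $\log\frac{\sigma_n^2+\nu_l^2(i/n)}{\sigma_n^2+1}=\log(1-a_i)$) yields
$$\log\Lambda(\nu_0,\nu_l)=\tfrac{1}{2}\sum_{i=1}^n a_i(Z_i^2-1)-D_{\rm KL}(\nu_l\|\nu_0),$$
where $D_{\rm KL}(\nu_l\|\nu_0)=\tfrac{1}{2}\sum_i[-a_i-\log(1-a_i)]$ is the standard KL formula between centered Gaussians. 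The random portion has mean zero and, by independence and $\mathrm{Var}(Z_i^2)=2$, variance $\tfrac{1}{2}\|a\|_2^2$.

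Second, I would bound $\|a\|_2^2$ and the KL divergence in terms of $\log M$. Disjointness of the supports of $f_{j,n}$ gives $1-\nu_l^2(i/n)=\sum_j(w_l)_j(2f_{j,n}(i/n)-f_{j,n}^2(i/n))\le 2\sum_j(w_l)_j f_{j,n}(i/n)$, so
$$\|a\|_2^2\le \frac{4}{(1+\sigma_n^2)^2}\sum_{j=1}^m (w_l)_j\sum_i f_{j,n}^2(i/n).$$
A Riemann-sum approximation gives $\sum_i f_{j,n}^2(i/n)\le C_\phi\, n\delta_n^{2\beta+1}$ with $C_\phi$ depending only on $\phi$. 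Using $\sum_j (w_l)_j\le m$, the choice $\delta_n=(\max(1,\sigma_n^4)/n)^{1/(2\beta+1)}$, and checking both regimes $\sigma_n\ge 1$ and $\sigma_n<1$ separately (where $n\delta_n^{2\beta+1}/(1+\sigma_n^2)^2$ is respectively $\le 1$ and $\le 1$) yields $\|a\|_2^2\le 4C_\phi m$. By \cref{GV}, $\log M\ge (m/8)\log 2$, so $\|a\|_2^2=O(\log M)$. Since $-a-\log(1-a)\le a^2$ for $a\in [0,1/2]$ and $\max_i a_i\lesssim \delta_n^\beta\to 0$ as $n\to\infty$, this gives $D_{\rm KL}(\nu_l\|\nu_0)\le \tfrac{1}{2}\|a\|_2^2$. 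A rescaling of the basic function $\phi$ by a small constant (equivalently, multiplying $\delta_n$ by a small constant, which does not change the asymptotic rate in \cref{l_2}) then ensures $D_{\rm KL}(\nu_l\|\nu_0)\le \lambda'\log M$ for any pre-specified $\lambda'<1$.

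Third, applying Chebyshev's inequality to the mean-zero sum $\tfrac{1}{2}\sum_i a_i(Z_i^2-1)$ with variance $\tfrac{1}{2}\|a\|_2^2$, taking $t:=\|a\|_2/\sqrt{2(1-p^*)}$, yields
$$\mathbb{P}_{\nu_l}\pr{\tfrac{1}{2}\sum_i a_i(Z_i^2-1)>-t}\ge p^*,$$
and on this event $\log\Lambda(\nu_0,\nu_l)>-D_{\rm KL}(\nu_l\|\nu_0)-t\ge -\lambda'\log M-O(\sqrt{\log M})$. Since $\sqrt{\log M}=o(\log M)$, for any $\lambda\in(\lambda',1)$ and all sufficiently large $n$ the right-hand side is at least $-\lambda\log M$, so taking $\lambda_l$ slightly above $\lambda'$ finishes the argument. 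The main obstacle is the constant calibration in the second step: the product of $C_\phi$, $\|\phi\|_2^2$, and the factor $1/8$ from Gilbert-Varshamov must combine to a quantity strictly less than $1/2$ so that $D_{\rm KL}(\nu_l\|\nu_0)<\log M$ with a definite gap; this is precisely what the freedom to rescale the basic function (or $\delta_n$) by a small constant ensures, at no cost to the asymptotic rate.
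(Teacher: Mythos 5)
Your proposal is correct, but it reaches the conclusion by a genuinely different and more elementary route than the paper. You exploit the exact identity $\log\Lambda(\nu_0,\nu_l)=\tfrac12\sum_i a_i(Z_i^2-1)-D_{\rm KL}(\mathbb{P}_{\nu_l}\|\mathbb{P}_{\nu_0})$ with $a_i=s_{l,i}$, bound both the KL term (via $-a-\log(1-a)\le a^2$ for $a\le 1/2$) and the fluctuation variance by $\tfrac12\|a\|_2^2=O(m)=O(\log M)$ using the disjoint supports, the choice of $\delta_n$ (which makes $n\delta_n^{2\beta+1}/(1+\sigma_n^2)^2\le 1$), and \cref{GV}, and then finish with Chebyshev, using the freedom to rescale $\phi$ so that the KL constant is strictly below $1$ times $\log M$. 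The paper instead Taylor-expands $1/(1-s_{l,i})$ and $\log(1-s_{l,i})$ with Lagrange remainders, controls the resulting quadratic terms with the Laurent--Massart bound (\cref{lM}), computes $\sum_i s_{l,i}^r$ for $r=2,3,4$ by Riemann sums, and applies the Berry--Esseen theorem (\cref{BE}) to the normalized linear term, with the calibration $\|\phi\|_2<1/10$ and $\lambda_l=\sqrt{C_l/2}$. Your route buys brevity and robustness: since \cref{thm1} only needs a constant probability $p^*$, second-moment control plus Chebyshev suffices, and the exact KL decomposition removes all the remainder bookkeeping and the separate quadratic-term concentration; the paper's CLT-based route yields a sharper (asymptotically Gaussian) description of the fluctuation, which is more than is needed here. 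Both arguments rely on the same two implicit ingredients, which you handle consistently with the paper: the lemma is invoked only in the regime $\delta_n\to 0$ (so $m\to\infty$, $\max_i s_{l,i}\to 0$, and "sufficiently large $n$" statements are legitimate), and the rescaling of the basic function only changes constants, not the rate $\delta_n^{2\beta}$ in the lower bound of \cref{l_2}, so the calibration costs nothing.
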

The proof of this lemma is postponed to \cref{proofl_2}.
Combining the results of  \cref{thm1} and \cref{lem:likelihoodratio} gives us
$$
\begin{aligned}
& \max_{0\leq j\leq M}\mathbb{P}_{\nu_j}\left(\|\hat{\nu}- \nu_j\|_2\geq \frac{ \delta_n^{\beta}\|\phi \|_2}{8} \right) = \max_{0\leq j\leq M}\mathbb{P}_{\nu_j}\left(\|\hat{\nu}- \nu_j\|^2_2\geq \frac{ \delta_n^{2\beta}\|\phi \|^2_2}{64} \right) \geq p^*/2  .\end{aligned}$$
Using Markov's inequality, for any estimator $\hat{\nu}$,

\begin{align*}
& \max_{0\leq j\leq M}\mathbb{E}_{\nu_j}\|\hat{\nu}- \nu_j\|^2_2 \geq \frac{p^* \delta_n^{2\beta}\|\phi \|^2_2}{128}
\end{align*}
Hence, in the case when $\sigma_n^4=o(n^4)$, we have
\begin{align*}
   R_2(\Sigma_{\mathfrak{h}}(\beta, L), {\sigma}_n)\geq \frac{p^* \delta_n^{2\beta}\|\phi \|^2_2}{128} = \Omega\left(\pr{\max \pr{1,\sigma_n^4} \over n}^{\frac{2\beta}{2\beta+1}}\right).
 \end{align*}
 If $\sigma_n^4=\Omega(n)$, then using the same argument as in \cref{proof of the lower bound for the classical model in L_2 norm}, we have by \cref{lem:risk:monotonicity} that
\begin{align*}
   R_2(\Sigma_{\mathfrak{h}}(\beta, L), {\sigma}_n)=\Omega(1).
 \end{align*}
 This completes the proof of the lower bound.

\subsubsection{Proof of \cref{belonging} and \cref{lem:likelihoodratio}}\label{proofl_2}

\begin{proof}(\cref{belonging})
Recall
\begin{equation*}
g_{l}(x)=1-\sum_{j=1}^{m}\left(w_{l}\right)_{j} f_{j,n}(x).
\end{equation*}
where $f_{j,n}(x):=\delta_n^{\beta} \phi\left(\frac{x-b_j}{\delta_n}\right)$. Note that according to \cref{f and f^2 holder}  $f_{j,n}(x) \in \Sigma(\beta,L/2)$. Since $f_{j,n}$'s have disjoint supports, for each $x\in [0,1]$
\begin{align*}
    g_{l}(x)
    = & 1-\sum_{j=1}^{m}\left(w_{l}\right)_{j} f_{j,n}(x)
    \ge 1- \delta_n^{\beta} \phi^*,
\end{align*}
where $\phi^* = \max_{z \in [0,1]} \phi(z)$.
Therefore for large enough $n$, $g_l\in [\mathfrak{h},1]$. Let $k$ be the largest integer strictly less than $\beta$. Then according to \cref{f and f^2 holder} we have
    \begin{align*}
        |f_{j,n}^{(k)}(x)-f_{j,n}^{(k)}(y)|\le \frac{L}{2}|x-y|^{\beta-k}.
    \end{align*}

    Now for any $x,y\in [0,1]$, let $j_x,j_y\in \{1,2,...,m\}$ such that $x\in \text{supp}(f_{j_x,n})$ and $y\in \text{supp}(f_{j_y,n})$. If $j_x\ne j_y$, since the supports of $f_{j_x,n}$ and  $f_{j_y,n}$ are disjoint, by the triangular inequality we have that 
\begin{equation*}
    |g_l^{(k)}(x)-g_l^{(k)}(y)|\le |f_{j_x,n}^{(k)}(x)-f_{j_x,n}^{(k)}(y)|+|f_{j_y,n}^{(k)}(x)-f_{j_y,n}^{(k)}(y)|\le L|x-y|^{\beta-k}.
\end{equation*}
If $j_x=j_y$, then
\begin{equation*}
    |g_l^{(k)}(x)-g_l^{(k)}(y)|\le |f_{j_x,n}^{(k)}(x)-f_{j_x,n}^{(k)}(y)|\le \frac{L}{2}|x-y|^{\beta-k} \le L|x-y|^{\beta-k}.
\end{equation*}
\end{proof}
\vspace{.2cm}

\begin{proof}(\cref{lem:likelihoodratio})
Our goal is to prove that for random variables
\begin{equation}
Y_i=Y_{l,i}:=\nu_l(i/n)\xi_i+\tau_i,~i=1,2,...,n
\end{equation}
we have
\begin{align}\label{eq:likelihood}
\begin{split}
    & \mathbb{P}_{\nu_l}\left(\sum_{i} \frac{Y_{i}^{2}}{1 + {\sigma}^{2}_n}\left( \frac{\sigma_n^2+1}{\sigma_n^2+\nu_{l}^{2}\left(i/n\right)}- 1\right)+\sum_{i} \log \frac{\sigma_n^2+\nu_{l}^{2}\left(i/n\right)}{\sigma_n^2+1}>-2\lambda_{l} \log M\right)  \\
& >p^{*},
\end{split}
\end{align}

for all $l=1, \ldots, M$. If we define $s_{l,i}:=\frac{1-\nu_l^2(i/n)}{{\sigma}^{2}_n +1}$, then proving \cref{eq:likelihood} is equivalent to proving that for some $p^*>0$,
\begin{align}\label{eq:likelihood3}
& \mathbb{P}_{\nu_l} \left[\sum_{i} \frac{Y_{i}^{2}}{1+ {\sigma}^{2}_n}\left( \frac{1}{1 - s_{l,i}}- 1\right)+\sum_{i} \log (1- s_{l,i})>-2\lambda_{l} \log M \right]>p^{*}.
\end{align}
We aim to simplify the likelihood expression further by using the mean value theorem. According to this theorem, we have
that for some $\epsilon_{l,i}, \epsilon_{l,i}^{\prime} \in\left(0, s_{l,i}\right),$
$$
\begin{aligned}
& \frac{1}{1-s_{l,i}}=1+s_{l,i}+\frac{s_{l,i}^{2}}{\left(1-\epsilon_{l,i}\right)^{3}}\\
& \log (1-s_{l,i})=-s_{l,i}-s_{l,i}^{2} \frac{1}{2\left(1-\epsilon_{l,i}^{\prime}\right)^{2}}.
\end{aligned}$$
Using these two equations we conclude that proving \cref{eq:likelihood3} is equivalent to proving:

\begin{align}\label{eq:likelihood4}
\begin{split}
    & \mathbb{P}_{\nu_l}\left[\sum_{i} \frac{Y_{i}^{2}}{1+ {\sigma}^{2}_n}\left(s_{l,i}+\frac{ s_{l,{i}}^{2}}{\left(1-\epsilon_{l,i}\right)^{3}}\right)-\sum_{i}\left(s_{l,i}+\frac{s_{l,i}^{2}}{2\left(1-\epsilon_{l,i}^{\prime}\right)^{2}}\right) >-2\lambda_{l} \log M\right]
\end{split}
\end{align}

Since $\delta_n \to 0$ as $n\to \infty$, for large enough $n$, we have that  $0< \epsilon_{l,i}, \epsilon_{l,i}^{\prime} < s_{l,i} < 2 [1-\nu_l(i/n)]\leq 2 \delta_n^{\beta} \phi^{*}<\frac{1}{2}$ (Notice that for one and only one value of $j=1, \ldots, m$, we have $f_{j,n}(i / n) = \delta_n^{\beta} \phi\left(\frac{i/n-b_j}{\delta_n}\right)>0$, since $f_{j,n}$'s have disjoint support by definition). Hence, in order to prove \cref{eq:likelihood4} it is sufficient to prove the following:
\begin{align*}
& \mathbb{P}_{\nu_l}\left[\sum_{i} \frac{Y_{i}^{2}}{1+ {\sigma}^{2}_n}\left(s_{l,i}+ s_{l,i}^{2}\right)-\sum_{i}\left(s_{l,i}+2 s_{l,i}^{2}\right)>-2\lambda_{l} \log M\right]>p^{*}  
\end{align*}
which is equivalent to proving:
\begin{align}\label{eq:likelihood5}
\mathbb{P}_{\nu_l}\left[\sum_{i} s_{l,i}\left(\frac{Y_{i}^{2}}{1+ {\sigma}^{2}_n}-1\right)+\sum_i s_{l,i}^{2} \left( \frac{Y_{i}^{2}}{1+ {\sigma}^{2}_n}-2\right)>-2\lambda_{l} \log M\right]>p^{*} . 
\end{align}

Using \cref{lM}, for any $t \geq 0$ we have,

$$
\begin{aligned}
&  \mathbb{P}_{\nu_l}\left(\sum_i Y_{i}^{2}\left(\frac{1}{{\sigma}^{2}_n+\nu_{l}^2{(i / n)}}\right)  s_{l,i}^{2} \frac{\sigma_n^2+\nu_{l}^{2}(i/n)}{\sigma_n^2+1} \leq \sum_{i} s_{l,i}^{2} \frac{\sigma_n^2+\nu_{l}^{2}(i/n)}{\sigma_n^2+1} \right.\\
& \left. -2 \sqrt{t}  \sqrt{\sum_{i}  s_{l,i}^{4} \left(\frac{\sigma_n^2+\nu_{l}^{2}(i/n)}{\sigma_n^2+1}\right)^2}\right)\leq e^{-t}.
\end{aligned}
$$

Recalling the definition of $s_{l,i}$, we know $1-s_{l,i}=\frac{\sigma_n^2+\nu_{l}^{2}(i/n)}{\sigma_n^2+1}$. 
Choosing $t=\log 3$, we have
\begin{align}\label{eq:quadratic_terms} 
 \begin{split}
&\mathbb{P}_{\nu_l}\left(\sum_i Y_{i}^{2}\left(\frac{1}{{\sigma}^{2}_n+\nu_{l}^2{(i / n)}}\right)  s_{l,i}^{2} (1-s_{l,i}) \leq \sum_{i} s_{l,i}^{2} (1-s_{l,i}) \right. \\
& \left. -2 \sqrt{\log 3} \sqrt{\sum_{i}  s_{l,i}^{4} (1-s_{l,i})^2}\right)\leq \frac{1}{3}.
 \end{split}
\end{align}
Combining \cref{eq:quadratic_terms} in \cref{eq:likelihood5}, we conclude that a sufficient condition to prove \cref{eq:likelihood5} is to show that for some $p^*\geq0$,

\begin{align}\label{eq:likelihood6}
\begin{split}
    & \mathbb{P}_{\nu_l}\left(\sum_{i} s_{l,i}\left(\frac{Y_{i}^{2}}{1+ {\sigma}^{2}_n}-1\right)- \sum_{i} s_{l,i}^{2} (1+s_{l,i})-2 \sqrt{\log 3} \sqrt{\sum_{i}  s_{l,i}^{4} } >-2\lambda_{l} \log M \right)  \\
&>p^{*}+\frac{1}{3} .
\end{split}
\end{align}

The terms with a factor $f_{j,n}(i/n) f_{k,n}(i/n)$ are zero for $j \neq k$. Hence, we can conclude that for $\forall r \in \mathbb{Z}^{+}$, we have 
$$
\begin{aligned}
&\sum_{i}\pr{1- \nu_l(i/n)}^{r}=\sum_{i}\left[\sum_{j=1}^{m}\left(\omega_{l}\right)_j f_{j,n}(i / n)\right]^{r}=\sum_{i} \sum_{j=1}^{m}\left(\omega_{l}\right)_{j}^{r} f_{j,n}^r(i/n).
\end{aligned}
$$
Therefore,
\begin{align}\label{eq:moments_f}
\begin{split}
    &\sum_{i} \sum_{\{j: (\omega_l)_j=1\}} f_{j,n}^r(i/n) = \sum_{i} \sum_{\{j: (\omega_l)_j=1\}} \delta_n^{r\beta} \phi^r\pr{\frac{i/n-b_j}{\delta_n}}  \\
&= \sum_{\{j: (\omega_l)_j=1\}}  \delta_n^{r\beta}  \sum_{i} \phi^r\pr{\frac{i/n-b_j}{\delta_n}} 
 \overset{(a)}{=}  \sum_{\{j: (\omega_l)_j=1\}} \delta_n^{\beta r +1}n \left(\|\phi\|_r^r + o(1)\right) \\
 &\overset{(b)}{=} C_l m \delta_n^{\beta r +1}n \left(\|\phi\|_r^r + o(1)\right),
\end{split}
\end{align}

where to obtain equality (a) we have used the fact that for approximately $n \delta_n$ different values of $i$, $\frac{i/n - b_j}{\delta_n}$ falls in the support of $\phi$ and gives a non-zero value, and that when $n\de_n\to \infty$, there is an the approximation of the integral by Riemann sum with error $o(1)$.  To obtain (b) we have chosen the coefficient $C_l$  such that $C_l m$ is the number of nonzero elements in $\omega_l$. By construction, we know $ 1\geq C_l \geq 1/16$. 

Since $\frac{1}{1+\sigma_n^2}(1-\nu_l(i/n)) \le s_{l,i}:=\frac{1-\nu_l^2(i/n)}{{\sigma}^{2}_n +1}\le \frac{2}{1+\sigma_n^2}(1-\nu_l(i/n))$, 
we can use \eqref{eq:moments_f} to show that 
\begin{align}\label{eq:sr_first}
\frac{1}{(1+\sigma_n^2)^r} C_l m \delta_n^{\beta r +1}n \left(\|\phi\|_r^r + o(1)\right)<\sum s_{l,i}^{r}< \frac{2^r}{(1+\sigma_n^2)^r} C_l m \delta_n^{\beta r +1}n \left(\|\phi\|_r^r + o(1)\right).
\end{align}
Now, if we use the upper bounds for $\sum s_{l,i}^{2}, \sum s_{l,i}^{3}$, and $\sum s_{l,i}^{4}$ in \cref{eq:likelihood6}, we conclude that for showing \cref{eq:likelihood6} it is sufficient to prove that for some $p^*>0$ independent of $n,j$, we have

\begin{align}\label{eq:likelihood7}
\begin{split}
    & \mathbb{P}_{\nu_l}\left(\sum_{i} s_{l,i}\left(\frac{Y_{i}^{2}}{1+ {\sigma}^{2}_n}-1\right)- \frac{4}{(1+\sigma_n^2)^2} C_l m \delta_n^{2\beta  +1}n \left(\|\phi\|_2^2 + o(1)\right) \right.  \\
& \left. - \frac{8}{(1+\sigma_n^2)^3} C_l m \delta_n^{3\beta  +1}n \left(\|\phi\|_3^3 + o(1)\right) -2 \sqrt{3} \sqrt{\frac{16}{(1+\sigma_n^2)^4} C_l m \delta_n^{4\beta  +1}n \left(\|\phi\|_4^4 + o(1)\right) } >-2\lambda_{l} \log M \right)  \\
&>p^{*}+\frac{1}{3} .
\end{split}
\end{align}

 Now for our choice of $\delta_n$ in \cref{choice of delta}, we have 
$\frac{1}{4} \leq \frac{\delta_n^{2\beta + 1}n}{(1+\sigma_n^2)^2}\leq 1 $. Therefore, to prove \cref{eq:likelihood7}, it is sufficient to show that for some $p^*>0$ independent of $n,j$, we have

\begin{align}\label{eq:intermediate_step2}
\begin{split}
    & \mathbb{P}_{\nu_l}\left(\sum_{i} s_{l,i}\left(\frac{Y_{i}^{2}}{1+ {\sigma}^{2}_n}-1\right)- 4 C_l m  \left(\|\phi\|_2^2 + o(1)\right) \right.  \\
& \left. - 8 C_l m \delta_n^{\beta } \left(\|\phi\|_3^3 + o(1)\right) -2 \sqrt{3} \sqrt{16 C_l m \delta_n^{2\beta } \left(\|\phi\|_4^4 + o(1)\right) } >-2\lambda_{l} \log M \right) >p^{*}+\frac{1}{3}.
\end{split}
\end{align}

Note that we have assumed that $\de_n=o(1)$. Hence, proving \eqref{eq:intermediate_step2} is equivalent to proving that for some $p^*>$ we have 
\begin{align}\label{eq:likelihood8}
\mathbb{P}_{\nu_l}\left(\sum_{i} s_{l,i}\left(\frac{Y_{i}^{2}}{1+ {\sigma}^{2}_n}-1\right)- 4 C_l m \|\phi\|_2^2 +o(m) >-2\lambda_{l} \log M \right) >p^{*}+\frac{1}{3}.
\end{align}

As we discussed before, from \cref{GV} we know that $M=\left[2^{\frac{m}{8}}\right]$. Furthermore, from \eqref{eq:sr_first} we have
\begin{align}\label{eq:bd:sum:sli}
\sum s_{l,i}^{2}<\frac{4}{(1+\sigma_n^2)^2} C_l m \delta_n^{2\beta +1}n \left(\|\phi\|_2^2 + o(1)\right)
\leq 5 C_l m \|\phi\|_2^2, 
\end{align}
where to obtain the last inequality we used the fact that $\frac{\delta_n^{2\beta + 1}n}{(1+\sigma_n^2)^2}\leq 1$. 
Hence, for proving \cref{eq:likelihood8} it is sufficient to show that for some $p^*>0$ independent of $n,j$ we have
\begin{align}\label{eq:likelihood9}
\begin{split}
    &  \mathbb{P}_{\nu_l}\left(\sum_{i} \frac{s_{l,i}}{\sqrt{2 \sum s_{l,i}^{2} \left(1-s_{l,i}\right)^2}}\left(\frac{Y_{i}^{2}}{1+ {\sigma}^{2}_n}-1 + s_{l,i}\right) > \frac{m\left(9 C_l  \|\phi\|_2^2 -\lambda_l/5 \right)} {\sqrt{2 \sum s_{l,i}^{2} \left(1-s_{l,i}\right)^2}}\right) \\
& >p^{*}+\frac{1}{3}.
\end{split}
\end{align}

 As we discussed earlier, for large enough $n$, we have $0< \epsilon_{l,i}, \epsilon_{l,i}^{\prime} < s_{l,i} < 2 \nu_l(i/n)\leq 2 \delta_n^{\beta} \phi^{*}<\frac{1}{2}$. By
bounding $\sqrt{ \left(1-s_{l,i}\right)^2}$ on the right hand side of \cref{eq:likelihood8}, we realize that if we show the following it will imply \cref{eq:likelihood9}:
$$\begin{aligned}
&  \mathbb{P}_{\nu_l}\left(\sum_{i} \frac{s_{l,i}}{\sqrt{2 \sum s_{l,i}^{2} \left(1-s_{l,i}\right)^2}}\left(\frac{Y_{i}^{2}}{1+ {\sigma}^{2}_n}-1 + s_{l,i}\right) > \frac{m \sqrt{2}\left(9 C_l  \|\phi\|_2^2 -\lambda_l /5 \right)} {\sqrt{ \sum s_{l,i}^{2}}}\right)\\
& >p^{*}+\frac{1}{3}.
\end{aligned}
$$
Note that if we use \cref{BE}, proving the above equation will be equivalent to showing:

\begin{align}\label{LR:afterBerryEsseen}
& \mathbb{P}\left(\mathcal{N}(0,1)>\frac{m \sqrt{2}\left(9 C_l  \|\phi\|_2^2 -\lambda_l/5 \right)} {\sqrt{ \sum s_{l,i}^{2}}}\right)
>p^{*}+\frac{1}{3}.
\end{align}

Hence we have to answer two questions to finish the proof:
\begin{itemize}
\item Does \cref{LR:afterBerryEsseen} hold?

\item Do the conditions of \cref{BE} hold in deriving \cref{LR:afterBerryEsseen}?
\end{itemize}

We answer these two questions below:

\begin{itemize}
\item Justification of \cref{LR:afterBerryEsseen}:

Using the bounds we discussed before for $\sum s_{l,i}^{2}$, in order to show \cref{LR:afterBerryEsseen} it is sufficient to show that

$$
\begin{aligned}
& \mathbb{P}\left(\mathcal{N}(0,1)>\frac{m \sqrt{10}\left(9 C_l  \|\phi\|_2^2 -\lambda_l/5 \right)} {\sqrt{ C_l m \|\phi\|_2^2}}\right)
>p^{*}+\frac{1}{3} \Longleftrightarrow \\
& \mathbb{P}\left(\mathcal{N}(0,1)> \sqrt{10m}\left(9\|\phi\|_{2} \sqrt{C_{l}}-\frac{\lambda_{l}}{5 \sqrt{C_{l}}{ }\|\phi\|_{2}}\right) \right)
>p^{*}+\frac{1}{3}.
\end{aligned}
$$

Note that $\frac{1}{16} \leq C_{l} \leq 1$. Hence, if we choose $\lambda_{l}=\sqrt{\frac{C_{l}}{2}}$, and $\phi$ is small enough such that $\|\phi\|_{2}<1/10$, we have $ B_{l}<0$ where $B_{l}:=9\|\phi\|_{2} \sqrt{C_{l}}-\frac{\lambda_{l}}{5 \sqrt{C_{l}}{ }\|\phi\|_{2}} $. 
Hence, we need to show 
$$\mathbb{P}\left(\mathcal{N}(0,1)> \sqrt{10m} B_l \right)>p^{*}+\frac{1}{3}.$$
Since $B_l<0$, we can take $p^* = \frac{1}{6}$.

\item Verifying the conditions of \cref{BE}:
\end{itemize}

For $i=1, \ldots, n$, define $X_{i}:=s_{l,i}\left(\frac{Y_{i}^{2}}{1+ {\sigma}^{2}_n}-1 +s_{l,i}\right)$. It is straightforward to see that
$$
\begin{aligned}
& \mathbb{E}\left(X_{i}\right)=s_{l,i} \left(\frac{\mathbb{E}\left(Y_{i}^{2}\right)}{(1+ {\sigma}^{2}_n)}-1 +s_{l,i}\right)= s_{l,i}\left(\frac{{\sigma}^{2}_n +  \nu_{l}^{2}\left(i/n\right)}{1+ {\sigma}^{2}_n} -1 + s_{l,i}\right)\\
& = s_{l,i} (1 - s_{l,i} -1 + s_{l,i}) =0,
\end{aligned}$$
and
$$\begin{aligned}
&\mathbb{E}\left(X_{i}^{2}\right)=\operatorname{Var}\left(X_{i}\right)= \frac{s_{l,i}^{2}}{(1+ {\sigma}^{2}_n)^2} \operatorname{Var}\left(Y_{i}^{2}\right)=\frac{s_{l,i}^{2}}{(1+ {\sigma}^{2}_n)^2} \left(\mathbb{E}\left(Y_{i}^{4}\right)-\mathbb{E}\left(Y_{i}^{2}\right)^{2}\right) \\
& =s_{l,i}^{2} \cdot 2  \left( \frac{{\sigma}^{2}_n +  \nu_{l}^{2}\left(i/n\right)}{1+ {\sigma}^{2}_n}\right)^2 =2\left(1-s_{l,i}\right)^{2} s_{l,i}^{2}.
\end{aligned}
$$

Moreover,
$$\begin{aligned}
& \mathbb{E}\left|X_i\right|^3 \leq s_{l, i}^3 \mathbb{E}\left[\left(\frac{Y_i^2}{1+{\sigma}^{2}_n}-\left(1-s_{l,i}\right)\right)^3\right] =s_{l,i}^3 \mathbb{E}\left[\left(\frac{Y_i^2-\left({\sigma}^{2}_n +\nu_l^2\left(i/n\right)\right)}{1+{\sigma}^{2}_n}\right)^3\right] \\
& \leq\left(\frac{s_{l, i}}{1+{\sigma}^{2}_n}\right)^3\left[\mathbb{E} Y_i^6+3 \mathbb{E} Y_i^4\left({\sigma}^{2}_n +\nu_l^2(i/n)\right)+3 \mathbb{E} Y_i^2\left({\sigma}^{2}_n +\nu_l^2(i / n)\right)^2\right. \\
& \left.+\left({\sigma}^{2}_n+\nu_l^2(i/ n)\right)^3\right] \\
& =\left(\frac{s_{l, i}}{1+{\sigma}^{2}_n}\right)^3\left(15\left({\sigma}^{2}_n +\nu_l^2(i / n)\right)^3+9\left({\sigma}^{2}_n+\nu_l^2(i / n)\right)^3\right. \\
& \left.  +3\left({\sigma}^{2}_n+\nu_l^2(i/n)\right)^3+\left({\sigma}^{2}_n +\nu_l^2(i/n)\right)^3\right)=28 s^3 _{l,i}\left(1-s_{l,i}\right)^3.
\end{aligned}
$$
Now we are ready to check the conditions of \cref{BE}. Note that we have
\begin{align}
\begin{split}
    \frac{\mathbb{E} (|X_i|^3)}{\mathbb{E} (X_i^2)} &= \frac{28 s_{l,i}^3 (1- s_{l,i}^3)}{2 (1- s_{l,i}^2) s_{l,i}^2} = 14 s_{l,i} (1- s_{l,i}) = 14 \frac{1-\nu_l^2(i / n) }{1+ \sigma_n^2} \frac{\sigma_n^2+\nu_l^2(i / n)}{1+ \sigma_n^2}  \\ 
& \leq 14 \frac{1-\nu_l^2(i / n) }{1+ \sigma_n^2} \leq 28 \frac{1 -\nu_l(i / n)}{1+ \sigma_n^2} \leq 28 (1 -\nu_l(i / n)) \leq 28 \delta_n^{\beta} \phi^*.
\end{split} 
\end{align}
Hence, we set $\kappa$ (defined in the statement of \cref{BE}) in the following way: 
$$\kappa=  28 \delta_n^{\beta} \phi^*.$$ Leta $S_{*}:=\frac{\sum_i X_{i}}{\sqrt{\sum_i \mathbb{E} X_{i}^{2}}},$ and $F_{S_*}$ denote the CDF of random variable $X_*$. From \cref{BE} we have 
\begin{align}\label{eq:CLT:upper}
\begin{split}
    & \sup _{x}\left|F_{S_{*}}(x)-\Phi(x)\right|<C_{o}\left(\frac{28 \delta_n^{\beta} \phi^* }{\sqrt{2 \sum s_{l,i}^{2} \left(1-s_{l,i}\right)^2}}\right) <\frac{C_{0}^{\prime} \delta_n^{\beta} }{\left(\sqrt{ \sum s_{l,i}^{2}}\right)} \overset{(a)}{=} O\left(\frac{ \delta_n^{\beta} }{\sqrt{m} }\right)  \\
& \overset{(b)}{=} O\left(\delta_n^{\beta+1/2}\right)= o(1),
\end{split}
\end{align}
where to obtain Equality (a) we used the bound we obtained for $\sum s_{l,i}^{2}$ in \eqref{eq:bd:sum:sli} and to obtain equality $b$ we used the fact that $m = [\delta_n^{-1}]$. Hence, \eqref{eq:CLT:upper} implies that 
\begin{align*}
\Big|\mathbb{P}\left(S_{*}> x \right)-\mathbb{P}(\mathcal{N}(0,1)> x )\Big|= o(1),
\end{align*}
and therefore
$$
\begin{aligned}
& \left|\mathbb{P}_{\nu_l}\left(\sum_{i}\left(\frac{Y_{i}^{2}}{1 + {\sigma}^{2}_n}-1+s_{l,i}\right)\frac{s_{i}}{\sqrt{2\sum_i s^2_i \left(1-s_{l,i}\right)^2}}>x\right)-\mathbb{P}\left(\mathcal{N}(0,1)> x \right)\right|\\
&=o(1). \end{aligned}.$$ 

\end{proof}

\subsection{Proof of the upper bound of \cref{l_2}}\label{ssec:upper:lpe}

In this subsection, to obtain an upper bound, we evaluate the performance of the modified version of the local polynomial estimator (LPE) approach presented in Section 1.6 of \cite{Nonp}.  

When $\sigma_n^4=\Omega(n)$, we can take the estimator $\hfn=0$ and it follows that
\begin{align}
R_2\pr{\Sigma_{\mathfrak{h}}(\beta,L),\sigma_n}\le \sup_{f\in \Sigma_{\mathfrak{h}}(\beta,L)}\|f\|_2^2\le 1
\end{align}

To an obtain an estimate of $f\in \Sigma_{\mathfrak{h}}(\beta,L)$ in the case when $\sigma_n^4=o(n)$, we  adopt the kernel method and LPE . Let $K:\R \to \R$ be a bounded, non-negative compactly supported kernel function, satisfying $\int_{\R}K(u)du=1$ and $K(u)>0$ for $u\in [-1,1]$. To simplify our calculations we consider the following kernel $K(\cdot)$:
\begin{equation}\label{the kernel function}
    K(u):=\frac{1}{2}I(|u|\le 1).
\end{equation}
Define
\begin{align}
    \mathbf{U}(u):=& \pr{1,u,{u^2\over 2!},...,{u^k \over k!}}^T.
\end{align}
For a given bandwidth $h_n>0$ satisfying
\begin{equation*}
    nh_n\to \infty, \text{ as }n\to \infty,
\end{equation*}
we define the function 
\begin{equation}\label{eq:W*def}
    W_{ni}^*(x):=\frac{1}{nh_n}\mathbf{U}^T(0)\mathcal{B}_{nx}^{-1}\mathbf{U}\pr{\frac{i/n-x}{h_n}}K\pr{\frac{i/n-x}{h_n}} 
\end{equation}
where the matrix $\mathcal{B}_{nx}$ is defined as 
\begin{equation}
    \mathcal{B}_{nx}=\sum_{i=1}^n \mathbf{U}\pr{\frac{i/n-x}{h_n}}\mathbf{U}^T\pr{\frac{i/n-x}{h_n}}K\pr{\frac{i/n-x}{h_n}}
\end{equation}

Defining $g(x):=f^2(x)$ we obtain an estimate of $g$ using the following: 
\begin{equation}
\hat{g}_n(x):=\sum_{i=1}^n \pr{Y_i^2-\sigma_n^2} \cdot W_{ni}^*(x),
\end{equation}
where
\begin{equation}
Y_i=f(i/n)\xi_i+\tau_i.
\end{equation}

Our goal is to find an upper bound for the mean square error. 
\begin{align}
\begin{split}
         & \E_f \br{\uint\br{\hat{g}_n(x)-f^2(x)}^2 dx} \\
     = & \uint b^2(x)dx+\uint \sigma^2(x)dx,
\end{split}
\end{align}
where 
\begin{align}
    b(x):=\E_f\br{\hat{g}_n(x)}-f^2(x)
\end{align}
represents the bias of $\hat{g}_n(x)$ and
\begin{align}
\sigma^2(x):=\E_f\br{\pr{\hat{g}_n(x)-\E_f\br{\hat{g}_n(x)}}^2}
\end{align}
denotes the variance of the estimator. Our aim is to uniformly bound $b(x)$ and $\sigma(x)$ for any $x\in [0,1]$. Here we recall a few properties of $W_{ni}^*$ and $\mathcal{B}_{nx}$ as follows:

\begin{proposition}[Proposition 1.12 and Lemma 1.3, \cite{Nonp}]\label{basic properties of W}
Under our choice of kernel function as in \cref{the kernel function}, there exists a constant $C_*$ independent of $n$ such that
    \begin{enumerate}
    \item $\sum_{i=1}^n W_{ni}^*(x)=1$ and $\sum_{i=1}^k(i/n-x)^k W_{ni}^*(x)=0$, where $k$ is the largest integer stricly smaller than $\beta$.
        \item $\sup_{i,x}\abs{W_{ni}^*(x)}\le \frac{C_*}{nh_n}$;
        \item $\sum_{i=1}^n \abs{W_{ni}^*(x)}\le C_*$
        \item $\mathcal{B}_{nx}$ is non-singular. Moreover,
        \begin{equation} \nm{\mathcal{B}_{nx}^{-1}v}_2\le C_*\nm{v}_2
        \end{equation}
    \end{enumerate}
\end{proposition}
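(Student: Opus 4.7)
The plan is to prove Property 1 first as a direct algebraic consequence of the normal-equations identity $\mathcal{B}_{nx}^{-1}\mathcal{B}_{nx} = I$, then treat Property 4 as the main technical step, and finally recover Properties 2 and 3 as easy corollaries.

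For Property 1, the starting point is the observation that $u^r/r!$ is exactly the $(r+1)$-th entry of $\mathbf{U}(u)$. Setting $u_i := (i/n - x)/h_n$ and writing $\mathbf{e}_j$ for the $j$-th standard basis vector in $\mathbb{R}^{k+1}$, one has $u_i^r \mathbf{U}(u_i) = r!\,\mathbf{U}(u_i)\mathbf{U}^T(u_i)\mathbf{e}_{r+1}$. Plugging this identity into $\sum_i (i/n-x)^r W_{ni}^*(x)$ and using the defining formula for $\mathcal{B}_{nx}$, the summand collapses via $\mathcal{B}_{nx}^{-1}\mathcal{B}_{nx} = I$, leaving a quantity proportional to $\mathbf{U}^T(0)\mathbf{e}_{r+1}$. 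Since $\mathbf{U}(0) = \mathbf{e}_1$, this vanishes for $1 \le r \le k$ and gives the required reproducing identity for $r=0$.

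For Property 4, the idea is to approximate the discrete matrix $\frac{1}{nh_n}\mathcal{B}_{nx}$ by its Riemann-integral counterpart
\begin{equation*}
B(x) := \int \mathbf{U}(u)\mathbf{U}^T(u)K(u)\,\mathbbm{1}\cb{x + h_n u \in [0,1]}\,du.
\end{equation*}
Since $K$ is bounded with compact support and $nh_n \to \infty$, a standard Riemann-sum estimate yields $\nm{\tfrac{1}{nh_n}\mathcal{B}_{nx} - B(x)} \to 0$ uniformly in $x \in [0,1]$. Next I would establish uniform positive definiteness of $B(x)$: for any unit vector $v \in \mathbb{R}^{k+1}$, $v^T B(x) v = \int (v^T \mathbf{U}(u))^2 K(u)\,\mathbbm{1}\cb{x + h_n u \in [0,1]}\,du > 0$, because $v^T\mathbf{U}(u)$ is a nonzero polynomial of degree at most $k$ and the effective integration domain always contains an interval of length bounded below (independently of $x$ and $n$) on which $K > 0$. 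A compactness argument over the unit sphere and over $x \in [0,1]$ then gives a uniform lower bound $\lambda_{\min}(B(x)) \ge c > 0$, which transfers to $\lambda_{\min}\pr{\tfrac{1}{nh_n}\mathcal{B}_{nx}} \ge c/2$ for large $n$, yielding the claimed operator-norm bound on $\mathcal{B}_{nx}^{-1}$ after accounting for the normalization.

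Given Property 4, Property 2 follows by Cauchy--Schwarz: since $\nm{\mathbf{U}(u)K(u)}_2 \le C$ for $u \in \operatorname{supp}(K)$, we obtain $|W_{ni}^*(x)| \le \frac{1}{nh_n}\nm{\mathbf{U}(0)}_2 \nm{\mathcal{B}_{nx}^{-1}\mathbf{U}(u_i)K(u_i)}_2 \le C_*/(nh_n)$. Property 3 is then immediate from Property 2 together with the elementary observation that only $O(nh_n)$ indices $i$ satisfy $|i/n - x| \le h_n$, so that $K(u_i) \ne 0$. The main obstacle is Property 4 — specifically, the uniform positive definiteness of $B(x)$ near the boundary points $x=0$ and $x=1$, where the effective integration domain shrinks. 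Verifying that even in the worst boundary geometry an interval of length bounded below (independent of $n$) on which $K > 0$ still lies inside the domain, so that the polynomial non-triviality argument survives, is where the real work lies.
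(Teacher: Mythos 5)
The paper does not prove this proposition; it is imported directly as Proposition 1.12 and Lemma 1.3 of Tsybakov's text \cite{Nonp}, so there is no in-paper proof to compare against. That said, your outline is a faithful reconstruction of Tsybakov's argument, and the key ideas are right: the algebraic identity $u^r\,\mathbf{U}(u) = r!\,\mathbf{U}(u)\mathbf{U}^T(u)\mathbf{e}_{r+1}$ does collapse the moment sums via $\mathcal{B}_{nx}^{-1}\mathcal{B}_{nx}=I$ and $\mathbf{U}(0)=\mathbf{e}_1$; the Riemann-sum comparison with the limit matrix $B(x)$ and the uniform positive-definiteness argument (noting that the effective integration domain always contains an interval of length at least one inside $[-1,1]$ where $K>0$, even at the boundary points $x=0,1$) is precisely how Tsybakov's Lemma 1.3 is obtained; and Properties 2 and 3 then follow from Property 4 plus Cauchy--Schwarz and the compact support of $K$ exactly as you say.

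Two points worth making explicit. First, your stated order is backwards logically: the Property 1 argument uses $\mathcal{B}_{nx}^{-1}\mathcal{B}_{nx}=I$, which presupposes the non-singularity established in Property 4, so Property 4 must come first. Second, the paper's displayed formula for $\mathcal{B}_{nx}$ omits the $\frac{1}{nh_n}$ normalization present in Tsybakov's definition; as literally written, Property 1 would give $\sum_i W_{ni}^*(x) = \frac{1}{nh_n}$ rather than $1$, and $\mathcal{B}_{nx}$ would grow at rate $nh_n$ rather than converge to $B(x)$. Your proof implicitly works with the correctly normalized $\frac{1}{nh_n}\mathcal{B}_{nx}$ in the Riemann-sum step; you should state that normalization explicitly, since the phrase ``a quantity proportional to $\mathbf{U}^T(0)\mathbf{e}_{r+1}$'' at the end of your Property 1 argument only equals $1$ for $r=0$ under the Tsybakov convention, and the operator-norm bound on $\mathcal{B}_{nx}^{-1}$ is tight only in that convention as well.
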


Now we shall estimate the upper bound by estimating $b(x)$ and $\sigma^2(x)$ separately. Note that
\begin{align}
     b(x)=\E_f\br{\hat{g}_n(x)}-f^2(x) =\sum_{i=1}^n f^2(i/n)W_{ni}^*(x)-f^2(x)
\end{align}

In view of \cref{lem: product is holder}, $f^2 \in \Sigma(\beta, C_{\beta,L})$ for some constant $C_{\beta,L}>0$. Hence, following the same steps as the ones in the proof of Theorem 1.6.1 and Theorem 1.6.5 of \cite{Mainbook} we conclude that 
\begin{align}
    &\abs{\sum_{i=1}^n f^2(i/n) W_{ni}^*(x)-f^2(x)}
   = O(h_n^{\beta}).
\end{align}

We should emphasize that the constants are independent of $x\in [0,1]$, and hence the following upper bound
\begin{equation}
b^2(x)=O\pr{h_n^{2\beta}}
\end{equation} 
holds uniformly on $x$. Our next step to find an upper bound for $\sigma^2(x)$. We have

\begin{align}
 \begin{split}
        \sigma^2(x)
    = & \E_f\br{\pr{\hgn(x)-\E_f\br{\hgn(x)}}^2}  \\
    = & \E_f\br{\pr{\sum_{i=1}^n \pr{Y_i^2-\sigma_n^2} W_{ni}^*(x)-\sum_{i=1}^n f^2(i/n)W_{ni}^*(x)}^2}  \\
   =& \E_f\br{\pr{\sum_{i=1}^n \br{f^2(i/n)(\xi_i^2-1)+(\tau_i^2-\sigma_n^2)+2f(i/n)\xi_i\tau_i}W_{ni}^*(x)}^2} \\
   \overset{(a)}{=}& \E\br{\sum_{i=1}^n \br{f^4(i/n)(\xi_i^2-1)^2+(\tau_i^2-\sigma_n^2)^2+4f(i/n)^2\xi_i^2\tau_i^2}W_{ni}^*(x)^2} \\
   \overset{(b)}{\le} & \E\br{\sum_{i=1}^n \br{(\xi_i^2-1)^2+(\tau_i^2-\sigma_n^2)^2+4\xi_i^2\tau_i^2}W_{ni}^*(x)^2} \\
   \overset{(c)}{=}& O(\max(1,\sigma_n^4)) \sum_{i=1}^n W_{ni}^*(x)^2 \\
    = & O(\max(1,\sigma_n^4)) O\pr{\frac{1}{nh_n}},
 \end{split}
\end{align}

where $(a)$ follows from our assumptions that $\xi_i$'s and $\tau_j$'s are independent so the cross product terms have zero mean and $(b)$ uses our assumption that $|f|\le 1$. $(c)$ follows from $\E[(\xi_i-1)^2]=O(1)$, $\E(\tau_i^2-\sigma_n^2)^2=O(\sigma_n^4)$ and $\E(\xi_i^2\tau_i^2)=O(\sigma_n^2)$.

To summarize, we have
\begin{align}\label{eq:upper_biasPvar}
  \begin{split}
         &\E_f \br{\uint\br{\hat{f}_n(x)-f(x)}^2 dx}  \\
   = & \uint b^2(x)dx+\uint \sigma^2(x)dx  \\
   \le & O\pr{h_n^{2\beta} + \frac{\max\pr{1,\sigma_n^4}}{nh_n}}. 
  \end{split}
\end{align}

Recall that we have the restrictions that $h_n\to 0$ and $nh_n \to \infty$ as $n\to \infty$. Optimizing $h_n^{2\beta} + \frac{\max\pr{1,\sigma_n^4}}{nh_n}$ over $ h_n>0$ yields 
\begin{equation}\label{optimized choice for $h_n$}
     \widetilde h_n\sim \left(\frac{\max\pr{1,\sigma_n^4}}{n}\right)^{1 \over 2\beta+1}.
 \end{equation}
For this choice of $h_n$, we always have $nh_n\to \infty$. In this case
\begin{equation*}
     \widetilde h_n^{2\beta}\sim \left(\frac{\max\pr{1,\sigma_n^4}}{n}\right)^{2\beta \over 2\beta+1}.
 \end{equation*}

If we plug in this choice of $\tilde{h}_n$ with \eqref{eq:upper_biasPvar} we obtain 
    \begin{equation*}
\E_f \br{\uint\br{\hat{f}_n(x)-f(x)}^2 dx}  = O\pr{\left(\frac{\max\pr{1,\sigma_n^4}}{n}\right)^{2\beta \over 2\beta+1}}.
    \end{equation*}

We can now finish the proof with the following simple steps: 
\begin{align}\label{passing from square to no square}
\begin{split}
    R_2\pr{\Sigma_{\mathfrak{h}}(\beta,L)}
    =&\inf_{\hfn}\sup_{f\in \Sigma_{\mathfrak{h}}(\beta,L)}\E_f\nm{\hfn-f}_2^2  \\
    \le &\inf_{\hfn\ge 0}\sup_{f\in \Sigma_{\mathfrak{h}}(\beta,L)}\E_f\nm{\hfn-f}_2^2 \\
    \overset{(a)}{\le} &\mathfrak{h}^2 \inf_{\hgn\ge 0}\sup_{f\in \Sigma_{\mathfrak{h}}(\beta,L)}\E_f\nm{\hgn-f^2}_2^2 \\
    \le &\mathfrak{h}^2 \inf_{\hgn}\sup_{f\in \Sigma_{\mathfrak{h}}(\beta,L)}\E_f\nm{\hgn-f^2}_2^2 = O_{\beta,L,\mathfrak{h}}\pr{\left(\frac{\max\pr{1,\sigma_n^4}}{n}\right)^{2\beta \over 2\beta+1}}.
\end{split}
\end{align}
where for $(a)$ we have used that assumption that $f\ge \mathfrak{h}$.

\subsection{Proof of the lower bound of \cref{l_infty}}
%The proof of this theorem follows a similar approach to the one used in the proof of \cref{l_2}. 
%Hence, we do not repeat our main steps. 

%\subsubsection{Roadmap of the proof }

\quad
 Our proof strategy for the lower bound mirrors that of \cref{l_2}. As a result, we will omit some steps and focus on explaining the construction of the set \(\{\nu_0, \nu_1, \ldots, \nu_m\}\). Note that our construction is simpler than the one in \cref{roadmap:lb}. For $\sigma_n^4= O\pr{n^{1-\eta}}$ for some $\eta\in (0,1)$, let 
\begin{align}\label{choice of delta for infinity norm}
\delta_n=\left(\log \frac{n}{(1+\sigma_n^2)^2}  \middle/ \frac{n}{(1+\sigma_n^2)^2}\right)^{\frac{1}{2\beta+1}}.
\end{align}

Define $\nu_{0} := 1$, and for $j=1, 2, \ldots, m ,\, m=[\delta_n^{-1}]$, define  $\nu_{j}:=f_{j,n}$, where 

$$
\begin{aligned}
& f_{j,n}(x):=1-\delta_n^{\beta} \phi\left(\frac{x-b_j}{\delta_n}\right), \quad b_{j}=(2 j-1) \frac{\delta_n}{2}.  
\end{aligned}
$$
Similar to \cref{belonging}, we can prove that $f_{j,n} \in \Sigma(\beta,L)$ for $j=1, \ldots, m$. Also, if $\phi^*$ denotes the maximum of the function $\phi$, then we will have
 \begin{align}
 \|\nu_{j}- \nu_{k}\|_{\infty} = \phi^{*} \delta_n^{\beta}, 
 \end{align}
 where $\phi^{*} :=\lVert \phi \rVert_{\infty}:= \sup_{x} |\phi(x)|.$
 Our next goal is to use \cref{thm1} with $d$ being the $L_{\infty}$ distance. Towards this goal, first we need to calculate the likelihood ratio, which is 
 \begin{align*}
& \Lambda\left(\nu_{0}, \nu_{j}\right)=\prod_{i} \sqrt{\frac{{\sigma}^{2}_n +  \nu_{j}^{2}\left(i/n\right)}{1+ {\sigma}^{2}_n}} \exp \left[\sum_i \frac{Y_{i}^{2}}{2 ({\sigma}^{2}_n + \nu_{j}^{2}\left(i/n\right))}-\sum \frac{Y_{i}^{2}}{2(1+{\sigma}^{2}_n)}\right] \\
& =\exp \left[\sum_{i} \frac{Y_{i}^{2}}{2}\left( \frac{1}{ {\sigma}^{2}_n + \nu_{j}^{2}\left(i/n\right)}- \frac{1}{1+ {\sigma}^{2}_n}\right)+\frac{1}{2}\sum_{i} \log \frac{\sigma_n^2+\nu_{j}^{2}\left(i/n\right)}{\sigma_n^2+1}\right]\\
& =\exp \left[\sum_{i} \frac{Y_{i}^{2}}{2(1 + {\sigma}^{2}_n)}\left( \frac{\sigma_n^2+1}{\sigma_n^2+\nu_{j}^{2}\left(i/n\right)}- 1\right)+\frac{1}{2}\sum_{i} \log \frac{\sigma_n^2+\nu_{j}^{2}\left(i/n\right)}{\sigma_n^2+1}\right].
\end{align*}

Now to guarantee the second condition of \cref{thm1}, we need to prove the following lemma:

\begin{lemma}\label{lem:likelihoodratio2}
Let $\nu_j,j=0,1,...,m$ be defined as above, then there exists some $\lambda< 1$, such that for all $j=1,\ldots,m$,
$$\mathbb{P}_{\nu_{j}}\left(\Lambda\left(\nu_{0}, \nu_{j}\right)> m^{-\lambda_j}\right)>p^{*},$$
whenever $\lambda_j< \lambda$.
\end{lemma}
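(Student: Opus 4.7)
The plan is to mirror the proof of \cref{lem:likelihoodratio} step by step, exploiting the simplification that each $\nu_j$ now differs from $\nu_0 \equiv 1$ on only a single bump. First I would set $s_{j,i} := (1-\nu_j^2(i/n))/(1+\sigma_n^2)$ and Taylor-expand $1/(1-s_{j,i})$ and $\log(1-s_{j,i})$ exactly as in that proof. Combined with \cref{lM} to control the random quadratic piece $\sum s_{j,i}^2 Y_i^2/(1+\sigma_n^2)$ at a cost of $1/3$ in probability, this reduces the event $\{\Lambda(\nu_0,\nu_j) > m^{-\lambda_j}\}$ to a one-sided deviation bound for $\sum_i s_{j,i}(Y_i^2/(1+\sigma_n^2) - 1)$, plus explicit correction terms governed by $\sum_i s_{j,i}^r$ for $r\ge 2$.

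The key moment computation is simpler here than in the $L_2$ case. Because $1 - \nu_j(x) = \delta_n^\beta \phi((x-b_j)/\delta_n)$ is supported on a single interval of length $\delta_n$, only $\Theta(n\delta_n)$ indices contribute, and a Riemann-sum approximation analogous to \eqref{eq:moments_f} gives
\begin{equation*}
\sum_{i=1}^n s_{j,i}^r \;\asymp\; \frac{n\,\delta_n^{r\beta+1}}{(1+\sigma_n^2)^r}\,\|\phi\|_r^r \qquad (r\ge 1).
\end{equation*}
With the choice \eqref{choice of delta for infinity norm} we have $\delta_n^{2\beta+1} = (\log M)/M$ for $M := n/(1+\sigma_n^2)^2$, and since $m = [\delta_n^{-1}]$ we obtain $\log m \asymp (\log M)/(2\beta+1)$. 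Consequently $\sum_i s_{j,i}^2 \asymp \|\phi\|_2^2 \log M \asymp (2\beta+1)\|\phi\|_2^2 \log m$, while $\sum_i s_{j,i}^3$ and $\sqrt{\sum_i s_{j,i}^4}$ each pick up an additional vanishing factor of $\delta_n^\beta$ and are therefore $o(\log m)$.

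Plugging these estimates back in, the event of interest reduces, up to an $o(\log m)$ slack, to showing
\begin{equation*}
\mathbb{P}_{\nu_j}\!\left(\sum_i s_{j,i}\left(\frac{Y_i^2}{1+\sigma_n^2} - 1\right) \;>\; (C_\phi - 2\lambda_j)\log m\right) \;>\; p^* + \tfrac{1}{3},
\end{equation*}
for an explicit constant $C_\phi$ proportional to $\|\phi\|_2^2$. Since the variance of the centered sum is $\sum_i 2 s_{j,i}^2(1-s_{j,i})^2 \asymp \log m$, applying the non-iid Berry--Esseen bound (\cref{BE}) exactly as in \eqref{eq:CLT:upper}---whose remainder is $O(\delta_n^\beta/\sqrt{\log m}) = o(1)$---replaces the probability by $\mathbb{P}(\mathcal{N}(0,1) > c(C_\phi - 2\lambda_j)\sqrt{\log m}) + o(1)$. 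Choosing the basic function $\phi$ scaled small enough that $C_\phi < 2$, and then picking $\lambda_j := (1+C_\phi/2)/2 \in (C_\phi/2, 1)$ uniformly in $j$, gives $C_\phi - 2\lambda_j < 0$, so the standardized threshold tends to $-\infty$ and the Gaussian tail converges to $1$; in particular it exceeds $p^* + 1/3$ (e.g. with $p^* = 1/6$) for all sufficiently large $n$, uniformly in $j\in \{1,\ldots,m\}$, which is exactly the statement of \cref{lem:likelihoodratio2} with $\lambda := (1+C_\phi/2)/2 < 1$.

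The main obstacle is bookkeeping: one must verify that the various approximation errors---the Taylor remainders for $1/(1-s_{j,i})$ and $\log(1-s_{j,i})$, the Laurent--Massart fluctuation term, and the Berry--Esseen correction---are all genuinely uniform in $j$ and are absorbed into the $o(\log m)$ slack on the deterministic side. Once the single-bump moment asymptotics above are in hand, the rest is a direct (if somewhat tedious) adaptation of the corresponding computation in \cref{lem:likelihoodratio}, with $m$ playing the role of $\log M$ in the original argument and $\log m$ (rather than $m$) being the natural scale of the deterministic bias.
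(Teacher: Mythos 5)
Your proposal is correct and follows essentially the same route as the paper's own proof: same Taylor expansion of the log-likelihood in the $s_{j,i}$, same use of \cref{lM} to offload the quadratic fluctuation at cost $1/3$, same single-bump Riemann-sum moment asymptotics giving $\sum_i s_{j,i}^2 \asymp \log\frac{n}{(1+\sigma_n^2)^2}$, same application of \cref{BE}, and the same endgame of scaling $\phi$ small and choosing $\lambda_j$ uniformly so the standardized threshold is negative, yielding $p^*=1/6$. The only differences are cosmetic: you parametrize the deterministic shift as $(C_\phi - 2\lambda_j)\log m$ with $\lambda_j := (1+C_\phi/2)/2$, whereas the paper fixes $\lambda_j := 1/2$ and imposes $\|\phi\|_2 < 1/(4\sqrt{2\beta+1})$, but these are interchangeable choices of the same free constants.
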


The proof will be given in \cref{pofll2}. Combining the results of \cref{thm1} (with $M=m$) and \cref{lem:likelihoodratio2} gives us

$$
\begin{aligned}
& \max_{0\leq j\leq m}\mathbb{P}_{\nu_j}\left(\|\hat{\nu}_n- \nu_j\|_\infty\geq \frac{ \delta_n^{\beta}\phi^*}{2} \right)\geq 1/4.\end{aligned}$$
Using Markov's inequality, for any estimator $\hat{\nu}_n$,
\begin{align*}
& \max_{0\leq j\leq m}\mathbb{E}_{\nu_j}\left(\|\hat{\nu}_n- \nu_j\|_\infty \right)\geq \frac{ \delta_n^{\beta}\phi^*}{8}.
\end{align*}
Hence, in the case $\sigma_n^4=O(n^{1-\eta})$ for some $\eta\in (0,1)$, we have
\begin{align*}
R_\infty(\Sigma_{\mathfrak{h}}(\beta, L), {\sigma}_n)\geq \frac{ \delta_n^{\beta}\phi^*}{8} = &\Theta\left(\left(\log \frac{n}{(1+\sigma_n^2)^2}  \middle/ \frac{n}{(1+\sigma_n^2)^2}\right)^{\frac{\beta}{2\beta +1}}\right)\\
=&\Theta_{\eta} \left(\pr{\max(1,\sigma_n^4)\frac{\log n}{n}}^{\frac{\beta}{2\beta +1}}\right)
.\end{align*}
\subsubsection{Proof of \cref{lem:likelihoodratio2}}\label{pofll2} We need to show that there exists $p^*>0$ independent of $j, n$ and $\lambda\in (0,1)$ such that for $j=1, \ldots,m $,

\begin{align}\label{eq:inf:l:1}
\begin{split}
    & \mathbb{P}_{\nu_j}\left(\sum_{i} \frac{Y_{i}^{2}}{1 + {\sigma}^{2}_n}\left( \frac{\sigma_n^2+1}{\sigma_n^2+f_{j,n}^{2}\left(i/n\right)}- 1\right)+\sum_{i} \log \frac{\sigma_n^2+f_{j,n}^{2}\left(i/n\right)}{\sigma_n^2+1}>-2\lambda_{j} \log m\right) \\
& >p^{*}.
\end{split}  \end{align}

If we put $s_{j,i}:=\frac{1-f_{j.n}^2(i/n)}{\sigma_n^2+1},$ then \cref{eq:inf:l:1} is equivalent to 

\begin{align}\label{eq:linf:l:3}
& \mathbb{P}_{\nu_j} \left[\sum_{i} \frac{Y_{i}^{2}}{1+ {\sigma}^{2}_n}\left( \frac{1}{1 - s_{j,i}}- 1\right)+\sum_{i} \log (1- s_{j,i})>-2\lambda_{j} \log m \right]>p^{*}.
\end{align}
Note that according to Taylor expansion, we know that for some $\epsilon_{j,i}, \epsilon_{j,i}^{\prime} \in\left(0, s_{j,i}\right),$ we have
$$
\begin{aligned}
& \frac{1}{1-s_{j,i}}=1+s_{j,i}+\frac{s_{j,i}^{2}}{\left(1-\epsilon_{j,i}\right)^{3}}\\
& \log (1-s_{j,i})=-s_{j,i}-s_{j,i}^{2} \frac{1}{2\left(1-\epsilon_{j,i}^{\prime}\right)^{2}}.
\end{aligned}$$
Using these two equations, we can simplify the problem of showing \cref{eq:linf:l:3} to the problem of proving:

\begin{align}\label{eq:3.24}
\begin{split}
    & \mathbb{P}_{\nu_j}\left[\sum_{i} \frac{Y_{i}^{2}}{1+ {\sigma}^{2}_n}\left(s_{j,i}+\frac{ s_{j,{i}}^{2}}{\left(1-\epsilon_{j,i}\right)^{3}}\right)-\sum_{i}\left(s_{j,i}+\frac{s_{j,i}^{2}}{2\left(1-\epsilon_{j,i}^{\prime}\right)^{2}}\right) >-2\lambda_{j} \log m\right] >p^{*}. 
\end{split}
\end{align}
For large enough $n$, we have
\begin{align}\label{trivial bound for s}
    0< \epsilon_{j,i}, \epsilon_{j,i}^{\prime} < s_{j,i} \leq 2 \delta_n^{\beta} \phi^{*}<\frac{1}{2}.
\end{align}
 Hence to prove \cref{eq:3.24} it is sufficient to prove
$$
\begin{aligned}
& \mathbb{P}_{\nu_j}\left[\sum_{i} \frac{Y_{i}^{2}}{1+ {\sigma}^{2}_n}\left(s_{j,i}+ s_{j,i}^{2}\right)-\sum_{i}\left(s_{j,i}+2 s_{j,i}^{2}\right)>-2\lambda_{j} \log m\right]>p^{*}  
\end{aligned},
$$
which is equivalent to 
\begin{align}\label{3.25}
\mathbb{P}_{\nu_j}\left[\sum_{i} s_{j,i}\left(\frac{Y_{i}^{2}}{1+ {\sigma}^{2}_n}-1\right)+\sum_i s_{j,i}^{2} \left( \frac{Y_{i}^{2}}{1+ {\sigma}^{2}_n}-2\right)>-2\lambda_{j} \log m\right]>p^{*} .
\end{align}

Using \cref{lM}, we have with the choice $t =\log 3$,

\begin{align}\label{3.26}
\begin{split}
    &  \mathbb{P}_{\nu_j}\left(\sum_i Y_{i}^{2}\left(\frac{1}{{\sigma}^{2}_n+f_{j,n}^2{(i / n)}}\right)  s_{j,i}^{2} \frac{\sigma_n^2+ f_{j,n}^{2}(i/n)}{\sigma_n^2+1} \leq \sum_{i} s_{j,i}^{2} \frac{\sigma_n^2+f_{j,n}^{2}(i/n)}{\sigma_n^2+1} \right.\\
& \left. -2 \sqrt{\log 3}  \sqrt{\sum_{i}  s_{j,i}^{4} \left(\frac{\sigma_n^2+ f_{j,n}^{2}(i/n)}{\sigma_n^2+1 }\right)^2}\right)\leq \frac{1}{3}.
\end{split}
\end{align}

Recalling the definition of $s_{j,i}$, we know that $1-s_{j,i}=\frac{\sigma_n^2+f_{j,n}^{2}(i/n)}{\sigma_n^2+1}$. Hence \cref{3.26} is equivalent to 
$$
\begin{aligned}
&  \mathbb{P}_{\nu_j}\left(\sum_i Y_{i}^{2}\left(\frac{1}{{\sigma}^{2}_n +f_{j,n}^2{(i / n)}}\right)  s_{j,i}^{2} (1-s_{j,i}) \leq \sum_{i} s_{j,i}^{2} (1-s_{j,i}) \right.\\
& \left. -2 \sqrt{\log 3} \sqrt{\sum_{i}  s_{j,i}^{4} (1-s_{j,i})^2}\right)\leq \frac{1}{3}.
\end{aligned}
$$

Therefore to show \cref{3.25}, we only need to show for some $p^*\geq0$,

\begin{align}\label{3.27}
\begin{split}
    & \mathbb{P}_{\nu_j}\left(\sum_{i} s_{j,i}\left(\frac{Y_{i}^{2}}{1+ {\sigma}^{2}_n }-1\right)- \sum_{i} s_{j,i}^{2} (1+s_{j,i})-2 \sqrt{\log 3} \sqrt{\sum_{i}  s_{j,i}^{4} } >-2\lambda_{j} \log m \right) \\
&>p^{*}+\frac{1}{3} . 
\end{split}
\end{align}
Note that $\forall r \in \mathbb{Z}^{+}$,
$$
\begin{aligned}
&\sum_{i} (1-f_{j,n}(i/n))^{r}=\delta_n^{r \beta} \sum_{i} \phi^{r} \left( \frac{i/n-b_j}{\delta_n}\right) =\delta_n^{r \beta+1} n \sum_{i} \frac{1}{n \delta_n} \phi^{r} \left(\frac{i/n-b_j}{\delta_n}\right)\\
&=\delta_n^{r \beta+1} n\left(\|\phi\|_{r}^{r}+o(1)\right).
\end{aligned}
$$

Moreover, $\frac{1}{1
+\sigma_n^2}\pr{1-f_{j,n}(i/n)}<s_{j,i} < \frac{2}{1+\sigma_n^2}\pr{1-f_{j,n}(i/n)}$, and therefore we have
\begin{align}\label{b}
{1\over (1+\sigma_n^2)^r}  \delta_n^{r\beta  +1}n \left(\|\phi\|_r^r + o(1)\right)<\sum_i s_{j,i}^{r}< {2^r\over (1+\sigma_n^2)^r}  \delta_n^{r\beta  +1}n \left(\|\phi\|_r^r + o(1)\right).    
\end{align}

If we use the upper bounds for $\sum s_{j,i}^{2}, \sum s_{j,i}^{3}$, and $\sum s_{j,i}^{4}$, to guarantee \cref{3.27} it is enough to show 

$$
\begin{aligned}
& \mathbb{P}_{\nu_j}\left(\sum_{i} s_{j,i}\left(\frac{Y_{i}^{2}}{1+ {\sigma}^{2}_n}-1\right)- {4\over (1+\sigma_n^2)^2}   \delta_n^{2\beta  +1}n \left(\|\phi\|_2^2 + o(1)\right) \right. \\
& \left. - {8\over (1+\sigma_n^2)^3}  \delta_n^{3\beta  +1}n \left(\|\phi\|_3^3 + o(1)\right) -2 \sqrt{\log 3} \sqrt{{16\over (1+\sigma_n^2)^4} \delta_n^{4\beta  +1}n \left(\|\phi\|_4^4 + o(1)\right) } >-2\lambda_{j} \log m \right) \\
&>p^{*}+\frac{1}{3} . 
\end{aligned}
$$
Now our choice of $\delta_n$ gives ${\delta_n^{2\beta + 1}n \over (1+\sigma_n^2)^2}= \log \frac{n}{(1+\sigma_n^2)^2}$ and substitute it into the above inequality, it suffices to establish
\begin{align}\label{before merging}
    \begin{split}
        & \mathbb{P}_{\nu_j}\left(\sum_{i} s_{j,i}\left(\frac{Y_{i}^{2}}{1+ {\sigma}^{2}_n}-1\right)- 4 \log \frac{n}{(1+\sigma_n^2)^2} \left(\|\phi\|_2^2 + o(1)\right) \right. \\
& \left. - 8  \delta_n^{\beta }\log \frac{n}{(1+\sigma_n^2)^2} \left(\|\phi\|_3^3 + o(1)\right) -2 \sqrt{\log 3} \sqrt{16 \delta_n^{2\beta }\log \frac{n}{(1+\sigma_n^2)^2} \left(\|\phi\|_4^4 + o(1)\right) } >-2\lambda_j \log m\right) \\
& >p^{*}+\frac{1}{3}.
    \end{split}
\end{align}
Rearranging the terms and noticing that $\delta_n=\left(\log \frac{n}{(1+\sigma_n^2)^2}  \middle/ \frac{n}{(1+\sigma_n^2)^2}\right)^{\frac{1}{2\beta+1}}$ is bounded and that
\begin{equation}
    \log m= \log \br{\de_n^{-1}}= \frac{1}{2\beta+1}\pr{\log \frac{n}{(1+\sigma_n^2)^2}-\log\log \frac{n}{(1+\sigma_n^2)^2}},
\end{equation}
and by merging all lower order terms involving $o\pr{\log \frac{n}{(1+\sigma_n^2)^2}}$, it suffices to show

\begin{align}\label{3.28}
\begin{split}
    & \mathbb{P}_{\nu_j}\left(\sum_{i} s_{j,i}\left(\frac{Y_{i}^{2}}{1+ {\sigma}^{2}_n}-1\right)- 4 \log \frac{n}{(1+\sigma_n^2)^2} \|\phi\|_2^2 +o\pr{\log \frac{n}{(1+\sigma_n^2)^2}} >-\frac{2\lambda_{j}}{2\beta+1} \log \frac{n}{(1+\sigma_n^2)^2} \right)\\
& >p^{*}+\frac{1}{3}.
\end{split}
\end{align}
Equivalently, we need to show that
$$
\begin{aligned}
& \mathbb{P}_{\nu_j}\left(\sum_{i} s_{j,i}\left(\frac{Y_{i}^{2}}{1+ {\sigma}^{2}_n}-1\right) > 4 \log \frac{n}{(1+\sigma_n^2)^2}  \|\phi\|_2^2 +o\pr{\log \frac{n}{(1+\sigma_n^2)^2}}-\frac{2\lambda_{j}}{2\beta+1} \log \frac{n}{(1+\sigma_n^2)^2} \right) \\
& >p^*+\frac{1}{3}.\end{aligned}$$
Since
\begin{align}\label{r=2}
\log \frac{n}{(1+\sigma_n^2)^2} \left(\|\phi\|_r^r + o(1)\right)<\sum_i s_{j,i}^{2}< 4 \log \frac{n}{(1+\sigma_n^2)^2} \left(\|\phi\|_r^r + o(1)\right),    
\end{align} 
it is sufficient to show that
\begin{align}\label{3.29}
\begin{split}
    &  \mathbb{P}_{\nu_j}\left(\sum_{i} \frac{s_{j,i}}{\sqrt{2 \sum_i s_{j,i}^{2} \left(1-s_{j,i}\right)^2}}\left(\frac{Y_{i}^{2}}{1+ {\sigma}^{2}_n}-1 + s_{j,i}\right)  \right. \\
& \left.  > \frac{\log \frac{n}{(1+\sigma_n^2)^2} \left(9  \|\phi\|_2^2 -\frac{2\lambda_j}{2\beta + 1} + o(1) \right)} {\sqrt{2 \sum_i s_{j,i}^{2} \left(1-s_{j,i}\right)^2}}\right) >p^*+\frac{1}{3}.
\end{split}
\end{align}
 From \cref{trivial bound for s} we have $0<s_{j,i}<\frac{1}{2}$, therefore it suffices to show
\begin{align}\label{3.30}
\begin{split}
    &  \mathbb{P}_{\nu_j}\left(\sum_{i} \frac{s_{j,i}}{\sqrt{2 \sum s_{j,i}^{2} \left(1-s_{j,i}\right)^2}}\left(\frac{Y_{i}^{2}}{1+ {\sigma}^{2}_n}-1 + s_{j,i}\right) \right. \\
& \left. > \frac{\sqrt{2}\log \frac{n}{(1+\sigma_n^2)^2}  \left(9   \|\phi\|_2^2 -\frac{2\lambda_j}{2\beta + 1} + o(1)\right)} {\sqrt{ \sum s_{j,i}^{2}}}\right) >p^*+\frac{1}{3}.
\end{split}
\end{align}

Now by \cref{BE}, the LHS in \cref{3.30} is almost a standard Gaussian random variable. Hence, it is enough to show

$$
\begin{aligned}
& \mathbb{P}\left(\mathcal{N}(0,1)> \frac{\sqrt{2} \log \frac{n}{(1+\sigma_n^2)^2}  \left(9   \|\phi\|_2^2 -\frac{2\lambda_j}{2\beta + 1} + o(1)\right)} {\sqrt{ \sum s_{j,i}^{2}}}\right)
>p^*+\frac{1}{3}.
\end{aligned}
$$
Using \cref{b} and bounding $\sum s_{j,i}^{2}$, the above inequality is guaranteed by showing

$$
\begin{aligned}
& \mathbb{P}\left(\mathcal{N}(0,1)>\frac{\sqrt{10}\log \frac{n}{(1+\sigma_n^2)^2}  \left(9   \|\phi\|_2^2 -\frac{2\lambda_j}{2\beta + 1}+ o(1)\right)} {\sqrt{\log \frac{n}{(1+\sigma_n^2)^2}} \|\phi\|_2} \right)
>p^*+\frac{1}{3},
\end{aligned}
$$

which is equivalent to 
\begin{align}\label{f}
 \mathbb{P}\left(\mathcal{N}(0,1)> \sqrt{10\log \frac{n}{(1+\sigma_n^2)^2} }\left(9\|\phi\|_{2} - \frac{2\lambda_j}{\|\phi\|_2(2\beta + 1)}+ o(1)\right) \right)
>p^*+\frac{1}{3}.   
\end{align}

Now if $\lambda_{j}:=\frac{1}{2}$ and $\phi$ is such that $\|\phi\|_{2}<\frac{1}{4\sqrt{(2 \beta+1)}}$ (Notice that we can always make $\phi$ small enough by scaling.), then we have $$9\|\phi\|_{2} - \frac{2\lambda_j}{\|\phi\|_2(2\beta + 1)}+ o(1)<0$$ and \cref{f} holds with $p^{*}=\frac{1}{6}$. 
%So $\left(\frac{\log n}{n}\right)^{\frac{1}{2 \beta +1}}$ is our lower bound.
Justification of the step that we have used \cref{BE} in the above argument is the same as \cref{proofl_2} and therefore skipped.

\subsection{Proof of the upper bound of \cref{l_infty}}
As in \cref{ssec:upper:lpe}, we consider the estimator
\begin{equation}
\hat{g}_n(x):=\sum_{i=1}^n \pr{Y_i^2-\sigma_n^2} \cdot W_{ni}^*(x),
\end{equation}
where
\begin{equation}
Y_i=f(i/n)\xi_i+\tau_i.
\end{equation}
From the bias-variance decomposition we obtain: 
\begin{align*}
\begin{split}
       &\E \br{\nm{\hgn-f^2}_{\infty}}\\
\le&\E \br{\nm{\E \hgn-f^2}_{\infty}+\nm{\hgn-\E \hgn }_{\infty}}\\
=& \nm{\E \hgn-f^2}_{\infty}+\E\nm{\hgn-\E \hgn }_{\infty}
\end{split}
\end{align*}

For the first part, as in \cref{ssec:upper:lpe}, we have uniformly in $x\in [0,1]$,
\begin{align}\label{bias part infinity norm}
     \begin{split}     &\abs{\E\br{\hat{g}_n(x)}-f^2(x)}\\
    =&\abs{\sum_{i=1}^n f^2(i/n)W_{ni}^*(x)-f^2(x)}\\
    =& O(h_n^{\beta}).
     \end{split}
\end{align}

For the second part,
\begin{align}\label{three parts}
 \begin{split}
        &\E \nm{\hgn-\E \hgn }_{\infty}\\
  = &\E \nm{\sum_{i=1}^n \br{f^2(i/n)(\xi_i^2-1)+(\tau_i^2-\sigma_n^2)+2f(i/n)\xi_i\tau_i}W_{ni}^*(x)}_{\infty}\\
  \le & \E \nm{\sum_{i=1}^n \br{f^2(i/n)(\xi_i^2-1)}W_{ni}^*(x)}_{\infty}+\E \nm{\sum_{i=1}^n \br{\tau_i^2-\sigma_n^2}W_{ni}^*(x)}_{\infty}\\
   & +\E \nm{\sum_{i=1}^n \br{2f(i/n)\xi_i\tau_i}W_{ni}^*(x)}_{\infty}\\
   = & \E \nm{\sum_{i=1}^n \br{f^2(i/n)(\xi_i^2-1)}W_{ni}^*(x)}_{\infty}+\sigma_n^2\E \nm{\sum_{i=1}^n \br{\tau_i^2/\sigma_n^2-1}W_{ni}^*(x)}_{\infty}\\
   & +\sigma_n \E \nm{\sum_{i=1}^n \br{2f(i/n)\xi_i\tau_i/\sigma_n}W_{ni}^*(x)}_{\infty}
 \end{split}
\end{align}

Note that each of the three terms above can be written as $\abs{\sum_{i=1}^n\zeta_i W_{ni}^*(x)}$ where $\zeta_i,i=1,2,..,n$ represents everything except for $W_{ni}^*(x)$ part. As the first step in finding an upper bound for the above three terms, we plug in $W_{ni}^*(x)$ given in \eqref{eq:W*def}, and simplify the expression in the following way:

\begin{align}\label{eq:zeta_idef}
\begin{split}
      \abs{\sum_{i=1}^n\zeta_i W_{ni}^*(x)} &= \frac{1}{nh_n} \left|\sum_{i=1}^n \zeta_i \mathbf{U}^T(0)\mathcal{B}_{nx}^{-1}\mathbf{U}\pr{\frac{i/n-x}{h_n}}K\pr{\frac{i/n-x}{h_n}}\right| \\
  &\overset{(a)}{=} \frac{1}{nh_n} \left|\sum_{i=1}^n \zeta_i \mathbf{U}^T(0)\mathcal{B}_{nx}^{-1} S_i(x)\right| \overset{(b)}{\le} \frac{1}{nh_n}\nm{\mathcal{B}_{nx}^{-1}\sum_{i=1}^n \zeta_i S_i(x)}_2  \\
  &\overset{(c)}{\le} \frac{1}{\lambda_0nh_n}\nm{\sum_{i=1}^n \zeta_iS_i(x)}_2,
\end{split}
\end{align}
where we have used the following facts to obtain the above relations:
\begin{itemize}
\item Equality (a): To obtain this equality we have defined: 
\begin{align}
S_i(x):=\mathbf{U}\pr{i/n-x \over h_n}K\pr{i/n-x\over h_n}.
\end{align}
\item Inequality (b): This inequality is an application of Cauchy-Schwartz inequality. 
\item Inequality (c): To obtain this inequality we have used part (4) of  \cref{basic properties of W} and have set $\lambda_0= \frac{1}{C_*}$
\end{itemize}
We should mention that the above steps have been used in Section 1.6.2 of \cite{Nonp} for the classical nonparametric regression.

Since the function $K$ and all components of $\mathbf{U}$ are bounded and Lipchitz on $[-1,1]$, there exists $L_{S}>0$ such that
\begin{align}
    \|S_i(x)-S_i(x')\|_2\le \frac{L_s}{h_n}|x-x'|
\end{align}

Now set $M=n^2$ and define $x_j:=j/M$ for $j=1,2,...,M$. We have

\begin{align}\label{switching to max}
   \begin{split}
          |\sum_{i=1}^n &\zeta_i W_{ni}^*(x)| \leq  \frac{1}{\lambda_0nh_n}\sup_{x\in [0,1]}\nm{\sum_{i=1}^n \zeta_iS_i(x)}_2  \\
 \le &\frac{1}{\lambda_0 nh_n}\pr{\max_{1\le j \le M}\nm{\sum_{i=1}^n \zeta_iS_i(x_j)}_2+\sup_{|x-x'|\le 1/M}\nm{\sum_{i=1}^n \zeta_i\pr{S_i(x)-S_j(x')}}_2} \\
 \le &\frac{1}{\lambda_0 \sqrt{nh_n}}\max_{1\le j \le M}\nm{\frac{1}{\sqrt{nh_n}}\sum_{i=1}^n \zeta_iS_i(x_j)}_2+\frac{L_s}{\lambda_0 Mnh_n^2}\sum_{i=1}^n |\zeta_i|  \\
 = & \frac{1}{\lambda_0 \sqrt{nh_n}}\max_{1\le j \le M}\nm{Z_j}_2+\frac{L_s}{\lambda_0 Mnh_n^2}\sum_{i=1}^n |\zeta_i|,
   \end{split}
\end{align}
where to obtain the last equality we have used the definition:
\begin{align}
    Z_j:=&\frac{1}{\sqrt{nh_n}}\sum_{i=1}^n \zeta_iS_i(x_j). 
\end{align}
Note that for our choice of $M$, we have
\begin{align}
    \E\br{\frac{L_s}{\lambda_0 Mnh_n^2}\sum_{i=1}^n |\zeta_i|}=O\pr{1\over n^2h_n^2}.
\end{align}
Now for each $1\le j\le M$, define 
\begin{align}
  \begin{split}
        Z_j:=&\frac{1}{\sqrt{nh_n}}\sum_{i=1}^n \zeta_iS_i(x_j)\\
    =&\frac{1}{\sqrt{nh_n}}\sum_{i=1}^n \zeta_i\mathbf{U}\pr{i/n-x_j \over h_n}K\pr{i/n-x_j\over h_n} 
  \end{split}
\end{align}

We will show (after choosing an optimized $h_n$) that $\E (\max_{1 \leq j \leq M} \|Z_j\|_2 )$ grows at the rate of $\sqrt{\log n}$ by first bounding $\E (\max_{1 \leq j \leq M} \|Z_j\|_2^2)$. To this end, we use a truncation trick for $Z_j$ and Bernstein's concentration inequality as we will describe below.

For fixed $j$, and $1\le r\le k$, let $u_{i,r}=u_{i,j,r}$ denote the $r$-th component of the vector $\mathbf{U}\pr{i/n-x_j \over h_n}$.  
Define the event 
\begin{align}
    \mathcal E_n(t):=\bigcup_{j=1}^M  \bigcup_{r=1}^k \cb{\abs{\sum_{i=1}^n \zeta_iu_{i,r} K\pr{i/n-x_j\over h_n}}\ge \frac{t}{r!}}
\end{align}

Comparing \eqref{three parts} and \eqref{eq:zeta_idef} we can see that for obtaining an upper bound for \eqref{three parts} we need to bound $\nm{\sum_{i=1}^n \zeta_iS_i(x)}_2$ for the following three choices of $\zeta_i$: $\zeta_i=f^2(i/n)(\xi_i^2-1), \tau_i^2/\sigma_n^2-1$ and $f(i/n)\xi_i\tau_i/\sigma_n$. It is straightforward to see that for these three choices there exists an absolute constant $c_{\zeta}$ such that 
$\xi_i, i=1,2,...,n$ is a sub-exponential random variable with sub-exponential norm bounded by $c_{\zeta}$.\footnote{Indeed, let $C_{sn}$ denote the sub-gaussian norm of a standard normal Gaussian random variable, namely $\|\xi_i\|_{\rm subgau}=C_{sn}$. From \cref{basic properties of subgau and subexp} we have, $    \|\tau_i^2/\sigma_n^2-1\|_{\rm subexp}=\|\xi_i^2-1\|_{\rm subexp}\le C\|\xi_i^2\|_{\rm subexp}$,  where $ C\|\xi_i^2\|_{\rm subexp} \le C\|\xi_i\|_{\rm subgau}^2=CC_{sn}^2$. Similarly, $ 
\|\xi_i^2\tau_i^2/\sigma_n^2\|_{\rm subexp}\le \|\xi_i\|_{\rm subgau}\|\tau_i/\sigma_n\|_{\rm subgau}=C_{sn}^2$. 
}

\begin{lemma}\label{lem:proof:subexp}
There exists an absolute constant, $c_{\zeta}$, that does not depend on $n$, such that all the random variables $f^2(i/n)(\xi_i^2-1), \tau_i^2/\sigma_n^2-1$, and $f(i/n)\xi_i\tau_i/\sigma_n$ for $i=1, 2, \ldots, n$ are sub-exponential random variables with sub-exponential norm bounded by $c_{\zeta}$.
\end{lemma}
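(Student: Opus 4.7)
The plan is to reduce all three random variables to products or squares of standard Gaussians multiplied by a bounded deterministic factor, and then invoke \cref{basic properties of subgau and subexp} piece by piece. Let $C_{\mathrm{sn}} := \|\xi\|_{\rm subgau}$ where $\xi \sim \mathcal{N}(0,1)$; this is an absolute constant by part (1) of \cref{basic properties of subgau and subexp}.

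First I would handle $\tau_i^2/\sigma_n^2 - 1$. Since $\tau_i/\sigma_n \sim \mathcal{N}(0,1)$, part (2) of \cref{basic properties of subgau and subexp} gives $\|(\tau_i/\sigma_n)^2\|_{\rm subexp} = \|\tau_i/\sigma_n\|_{\rm subgau}^2 = C_{\mathrm{sn}}^2$, and then part (4) yields $\|\tau_i^2/\sigma_n^2 - 1\|_{\rm subexp} \leq C\, C_{\mathrm{sn}}^2$ for an absolute $C$. Next, for $f^2(i/n)(\xi_i^2 - 1)$, I would note that $\xi_i^2 - 1$ is sub-exponential with norm at most $C C_{\mathrm{sn}}^2$ by the same argument, and since $|f(i/n)| \leq 1$ (because $f \in \Sigma_{\mathfrak{h}}(\beta,L) \subseteq [\mathfrak{h},1]$), multiplication by the deterministic scalar $f^2(i/n) \in [0,1]$ does not increase the sub-exponential norm. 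Thus $\|f^2(i/n)(\xi_i^2-1)\|_{\rm subexp} \leq CC_{\mathrm{sn}}^2$.

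For the cross term $f(i/n)\xi_i \tau_i/\sigma_n$, I would use part (3) of \cref{basic properties of subgau and subexp}: since $\xi_i$ and $\tau_i/\sigma_n$ are both standard Gaussian (hence sub-Gaussian with norm $C_{\mathrm{sn}}$), their product is sub-exponential with $\|\xi_i \tau_i/\sigma_n\|_{\rm subexp} \leq C_{\mathrm{sn}}^2$. Again $|f(i/n)| \leq 1$ so the deterministic scaling does not enlarge the norm. Note this term already has zero mean by independence of $\xi_i$ and $\tau_i$, so no centering is needed; for the first two terms the centering is built into the definition.

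Finally, I would take
\[
c_\zeta := \max\{CC_{\mathrm{sn}}^2,\ C_{\mathrm{sn}}^2\},
\]
which is an absolute constant independent of $i$, $n$, $\sigma_n$, and the particular $f \in \Sigma_{\mathfrak{h}}(\beta,L)$. I expect no real obstacle here—the proof is essentially a careful bookkeeping exercise using the properties collected in \cref{basic properties of subgau and subexp}. The only mildly delicate point is to recognize that the bound must be uniform in $\sigma_n$, which is why the problem is stated in terms of the normalized quantities $\tau_i/\sigma_n$ and $\tau_i^2/\sigma_n^2$: once normalized, every random factor becomes (a function of) a standard Gaussian, and the bounds are truly absolute.
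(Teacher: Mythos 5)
Your proof is correct and takes essentially the same route as the paper's own argument, which is sketched in the footnote immediately preceding \cref{lem:proof:subexp} (the dedicated proof subsection is left empty in the source). You invoke the same parts of \cref{basic properties of subgau and subexp}—squaring a standard Gaussian, taking the product of two standard Gaussians, centering, and scaling by the bounded deterministic factor $f(i/n)\in(0,1]$—and your bookkeeping is if anything slightly more careful, e.g.\ in noting explicitly that the cross term $f(i/n)\xi_i\tau_i/\sigma_n$ is already mean-zero so no centering constant is needed there.
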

While the proof of the claim is straightforward, we include it in \cref{ssec:proof:lemma:subexpterms} for completeness. Now by Bernstein's inequality \cref{berstein's inequality for sub-exponential distribution}, we have 
\begin{align}\label{bound using Berstein's inequality}
 \begin{split}
        & \P \pr{\abs{\sum_{i=1}^n \zeta_iu_{i,r} K\pr{i/n-x_j\over h_n}}\ge \frac{t}{r!}} \\
    \overset{(a)}{\le} & \exp\pr{-c\min\cb{\frac{t^2/r!^2}{c_{\zeta}^2\sum_{i=1}^n (\frac{1}{r!})^2K^2\pr{i/n-x_j\over h_n}},\frac{t/r!}{c_{\zeta}\max_{1\le i \le n}\frac{1}{r!}K\pr{i/n-x_j\over h_n}}}} \\
    \overset{(b)}{\le} & \exp\pr{-c\min\cb{\frac{2t^2}{c_{\zeta}^2nh_n},\frac{2t}{c_{\zeta}}}},
 \end{split}
\end{align}
where to obtain Inequality (a) we have used \cref{berstein's inequality for sub-exponential distribution}, \cref{lem:proof:subexp} and the fact that since $K$ is supported on $[-1,1]$, $\abs{u_{i,r}K\pr{i/n-x_j\over h_n}}\le \frac{1}{r!}K\pr{i/n-x_j\over h_n}$, and
\begin{align*}
\sum_{r=1}^ku_{i,r}^2K^2\pr{i/n-x_j\over h_n}\le 
    \sum_{r=1}^k\frac{1}{(r!)^2}K^2\pr{i/n-x_j\over h_n}.
\end{align*}
To obtain Inequality (b) we have used the fact that by construction, $K\pr{i/n-x_j\over h_n}=0$ whenever $\abs{i/n-x_j\over h_n}\ge 1$, and by definition \cref{the kernel function}, $K(x)=2K^2(x)$. Hence,
\begin{align}
  \begin{split}
        &\frac{1}{nh_n}\sum_{i=1}^n K^2\pr{i/n-x_j\over h_n}=\frac{1}{2nh_n}\sum_{i=1}^n K\pr{i/n-x_j\over h_n} \\
    =&\frac{1}{4nh_n}\sum_{i=1}^n I(-h_n+x_j\le i/n\le h_n+x_j)
    \le  \frac{1}{4}\max\pr{2,\frac{1}{nh_n}} \le \frac{1}{2},
  \end{split}
\end{align}
where to obtain the last inequality we have used the fact that $nh_n\ge \frac{1}{2}$ as $n\to\infty$.

Using the union bound and \eqref{bound using Berstein's inequality} we can  conclude that
\begin{align}\label{event:tailbound}
    \P\pr{\mathcal E_n(t)}\le Mk \exp\pr{-c\min\cb{\frac{2t^2}{nh_nc_{\zeta}^2},\frac{2t}{c_{\zeta}}}}.
\end{align}
Note that on the complement event $\mathcal E_n(t)^c$, we have that for every $1\le j \le M$,
\begin{align}\label{bound of Z_j}
  \begin{split}
        \|Z_j\|_2^2:=&\frac{1}{nh_n}\br{
        \sum_{i=1}^n \zeta_iu_{i,1} K\pr{i/n-x_j\over h_n}}^2+\cdots+\frac{1}{nh_n}\br{\sum_{i=1}^n \zeta_iu_{i,k} K\pr{i/n-x_j\over h_n}}^2 \\
        \le & \pr{\sum_{r=1}^k \frac{1}{r!^2}}
        \frac{t^2}{nh_n}
        \le \frac{2t^2}{nh_n}.
  \end{split}
\end{align}
Our next goal is to use the tail bound obtained in \eqref{event:tailbound} for obtaining an upper bound for $\E\br{\max_{1\le j\le M} \nm{Z_j}_2^2}$. We have 
\begin{align}\label{eq:E:Z_j:max}
   \begin{split}
        &\E\br{\max_{1\le j\le M}\nm{Z_j}_2^2}\\
        =&\E\br{\max_{1\le j\le M} \nm{Z_j}_2^2\mathbf{1}_{\mathcal E_n(t)^c}}+\E\br{\max_{1\le j\le M} \nm{Z_j}_2^2\mathbf{1}_{\mathcal E_n(t)}}  \\
    =&\E\br{\max_{1\le j\le M} \nm{Z_j}_2^2\mathbf{1}_{\mathcal E_n(t)^c}}+\int_0^{\infty}\P\br{\max_{1\le j\le M} \nm{Z_j}_2^2\mathbf{1}_{\mathcal E_n(t)}>s}ds  \\
    \overset{(a)}{\le }&\frac{2t^2}{nh_n}+\int_{2t^2 \over nh_n}^{\infty}\P\br{\max_{1\le j\le M} \nm{Z_j}_2^2>s}ds
   \end{split}
\end{align}

Here $(a)$ follows from \cref{bound of Z_j}. Hence, the main remaining step is to obtain an upper bound for $\int_{2t^2 \over nh_n}^{\infty}\P\br{\max_{1\le j\le M} \nm{Z_j}_2^2>s}ds$.

For any $s\ge 0$, it follows from  \cref{bound using Berstein's inequality} and the union bound that we have the tail probability bound
\begin{align}
\begin{split}
    & \P\br{\max_{1\le j\le M} \nm{Z_j}_2^2>s} \le  \sum_{j=1}^M \P\br{\nm{Z_j}_2^2>s} \\
\le & \sum_{j=1}^M \sum_{r=1}^k \P \pr{\frac{1}{nh_n}\abs{\sum_{i=1}^n \zeta_iu_{i,r} K\pr{i/n-x_j\over h_n}}^2> \frac{s}{4r!^2}}\\
\le & \sum_{j=1}^M \sum_{r=1}^k \P \pr{\frac{1}{\sqrt{nh_n}}\abs{\sum_{i=1}^n \zeta_iu_{i,r} K\pr{i/n-x_j\over h_n}}> \frac{\sqrt{s}}{2r!}}\\
\le & Mk\exp\pr{-c\min\cb{\frac{s}{2c_{\zeta}^2},\frac{\sqrt{nh_ns}}{c_{\zeta}}}},
\end{split}
\end{align}
where the last equality is obtained from \eqref{bound using Berstein's inequality}. Using this expression we now are in a position to bound $\int_{2t^2 \over nh_n}^{\infty}\P\br{\max_{1\le j\le M} \nm{Z_j}_2^2>s}ds$.

Setting $\frac{s}{2c_{\zeta}^2}=\frac{\sqrt{snh_n}}{c_{\zeta}}$, we have $s=c'nh_n$ where $c'=4c_{\zeta}^2$. By splitting the integral $\int_{2t^2 \over nh_n}^{\infty}\P\br{\max_{1\le j\le M} \nm{Z_j}_2^2>s}ds$ at $s=c'nh_n$ we have:

\begin{align}\label{eq:int:max:Z_j:prob}
\begin{split}
        & \int_{2t^2 \over nh_n}^{\infty}\P\br{\max_{1\le j\le M} \nm{Z_j}_2^2>s}ds  \\
\le & Mk \pr{\int_{2t^2 \over nh_n}^{c'nh_n}\exp\pr{-c\frac{s}{2c_{\zeta}^2}}ds+ \int_{c'nh_n}^{\infty}\exp\pr{-c\frac{\sqrt {nh_ns}}{c_{\zeta}}}ds}  \\
= & Mk \br{\frac{2c_{\zeta}^2}{c}\exp\pr{-c\frac{2t^2/nh_n}{2c_{\zeta}^2}}-\frac{2c_{\zeta}^2}{c}\exp\pr{-c\frac{c'nh_n}{2c_{\zeta}^2}}} \\
+ & Mk\cdot  \pr{\frac{\sqrt{c'}c_{\zeta}}{c}} \cdot \exp\pr{-\frac{c}{c_{\zeta}}\sqrt{c'}nh_n} + Mk\cdot  \pr{\frac{c^2_{\zeta}}{c^2 n h_n}} \cdot \exp\pr{-\frac{c}{c_{\zeta}}\sqrt{c'}nh_n},
\end{split}
\end{align}
where to obtain the last inequality we have used the following integration formulas:
\begin{align}
  \begin{split}
        &\int_A^{\infty} \exp \pr{-\lambda s}ds=\frac{\exp\pr{-\lambda A}}{\lambda},  \\
    &\int_A^{\infty}\exp\pr{-\lambda\sqrt{s}}ds=\pr{\frac{\sqrt{A}}{\lambda}+\frac{1}{\lambda^2}}\exp(-\lambda \sqrt{A})
  \end{split}
\end{align}

\vspace{3mm}
Choosing $t=C_k \sqrt{nh_n\log n}$ for sufficiently large constant $C_k>0$, and combining \eqref{eq:E:Z_j:max} and \eqref{eq:int:max:Z_j:prob} we have
\begin{align*}
    \E\br{\max_{1\le j\le M} \nm{Z_j}_2^2}=O\pr{\log n +M\exp\pr{-Cnh_n}}. 
\end{align*}
Hence,
%$\eta_i:=\zeta_i^2-\E\br{\zeta_i^2}$ with $\zeta_i=\xi_i^2-1, \tau_i^2/\sigma_n^2-1$ and $\xi_i^2\tau_i^2/\sigma_n^2$, and $m=M=n^l$ for sufficiently large $l$  yields \textcolor{blue}{Now the previous $\log^2 n$ issue seems to be fixed. But still, the most ideal form is $\log \frac{n}{\max(1,\sigma_n^4)}$. Our bound is only sharp when $\sigma_n^4=O(n^{1-\eta})$. If $\sigma_n^4=n/\log n$ then not good. This could be the limitation of our estimator. Also at the beginning we singled our $\sigma_n^2$ which limits our freedom of putting $\sigma_n$ inside $\log(n)$}

\begin{align*}
\E\br{\max_{1\le j\le M}\|Z_j\|_2}\le \sqrt{\E\br{\max_{1\le j\le M}\|Z_j\|_2^2}}=O\pr{\sqrt{\log n}+\sqrt{M\exp\pr{-Cnh_n}}}.
\end{align*}

 Plugging this back to \cref{switching to max} and combining three parts from \cref{three parts} yields the upper bound (recall that $M=n^2$)
\begin{equation*}
   \E \nm{\hgn-\E \hgn }_{\infty}=O\pr{ \max(1,\sigma_n^2)\sqrt{\frac{\log n}{nh_n}}+\max(1,\sigma_n^2)\sqrt{n^2\exp\pr{-Cnh_n}\over nh_n}}
\end{equation*}
Together with \cref{bias part infinity norm}, we have
\begin{align*}
\begin{split}
       &\E \br{\nm{\hgn-f^2}_{\infty}}\le O\pr{ \max(1,\sigma_n^2)\frac{\sqrt{\log n}}{\sqrt{nh_n}}+\max(1,\sigma_n^2)\sqrt{n^2\exp\pr{-Cnh_n}\over nh_n}+h_n^{-\beta}}
\end{split}
\end{align*}

Now by choosing $h_n=\pr{\max(1,\sigma_n^4)\frac{\log n}{n}}^{\frac{1}{2\beta+1}}$, we see that the middle term has a faster decay rate than the first term, and that 

\begin{equation}
    R_{\infty} \left(\Sigma_{\mathfrak{h}}(\beta, L), \sigma_n\right) \overset{(a)}{\le} \sup_{f\in \Sigma_{\mathfrak{h}}(\beta,L)} \frac{1}{\mathfrak{h}}\E \br{\nm{\hgn-f^2}_{\infty}}=O_{\beta,L,\mathfrak{h},\eta}\pr{\pr{\max(1,\sigma_n^4)\frac{\log n}{n}}^{\frac{\beta}{2\beta+1}}},
\end{equation}
where for $(a)$ we have again used the assumption that $f\ge \mathfrak{h}$ (c.f. \cref{passing from square to no square}).

\subsubsection{Proof of \cref{lem:proof:subexp}}\label{ssec:proof:lemma:subexpterms}

\section{Conclusion}\label{Conclusion}
In this paper, we presented the first minimax analysis of recovering a smooth function $f$ from its samples that are corrupted with speckle noise. 
More specifically, let \(n\) represents the number of data points, \(f\) be a \(\beta\)-H{\"o}lder continuous function, and \(\hat{\nu}_n\) be an estimator of \(f\) obtained from the samples of $f$ corrupted by the speckle and additive noises. We proved that the minimax risk can then be expressed as \(\inf_{\hat{\nu}_n} \sup_f \mathbb{E}_f\|\hat{\nu}_n - f \|^2_2 = \Theta_{\beta, L}\pr{ \min\cb{1,\pr{\frac{\max(1,\sigma_n^4)}{n}}^{\frac{2\beta}{2\beta+1}}}}\). 

%While the minimax analysis is mainly concerned with large values of $n$, our simulation results have confirmed that this rate is achieved for practically relevant values of $n$. 

\newpage
\appendix
\section{Rationale for De-Speckling Modeling Assumptions}\label{app:model_justification}

 In this section, we describe key properties of the de-speckling problem that motivated our choice of the function class $\Sigma_{\mathfrak{h}}(\beta, L)$. From the model formulation,
 $$
 y_i = f(x_i)\, \xi_i + \tau_i, \quad i = 1, 2, \ldots, n,
 $$
 it is evident that the symmetry of the noise introduces an identifiability issue: both $f$ and $-f$ produce identical observations. However, the identifiability challenges extend beyond this simple symmetry. For example, the functions $f(x) = x - \frac{1}{2}$ and $g(x) = \left|x - \frac{1}{2}\right|$, defined on $[0,1]$, also lead to identical observations. One natural constraint to ensure identifiability is to restrict the range of $f(\cdot)$ to be a subset of $[0, \infty)$.

In addition to the positivity constraint imposed in $\Sigma_{\mathfrak{h}}(\beta, L)$, we also assume that $f(x) \geq \mathfrak{h} > 0$. To motivate this assumption, consider a simplified setting in which we aim to estimate a scalar parameter $\theta_0 > 0$ from observations of the form:
$$
y_i = \theta_0 \xi_i + \tau_i, \quad i = 1, 2, \ldots, n.
$$
A straightforward calculation shows that the maximum likelihood estimator of $\theta_0$ is
$$
\hat{\theta} = \sqrt{\frac{1}{n} \sum_{i=1}^n y_i^2 - \sigma_n^2}.
$$
Applying the delta method, we obtain the asymptotic distribution:

$$
\sqrt{n}(\hat{\theta} - \theta_0) \overset{d}{\longrightarrow} \mathcal{N}\left(0, \frac{1}{4\theta_0^2} \cdot 2(\theta_0 + \sigma_n^2)^2\right).
$$
As this expression shows, when $\theta_0 \to 0$, the variance of the estimator diverges, and accurate estimation becomes infeasible. A similar issue arises in our setting: allowing the function $f(x)$ to approach zero can significantly degrade the performance of locally polynomial regression. For this reason, we define $\Sigma_{\mathfrak{h}}(\beta, L)$ to include functions satisfying $f(x) \geq \mathfrak{h}$ for some fixed constant $\mathfrak{h} > 0$.

 The final point we would like to make is that, in the definition of $\Sigma_{\mathfrak{h}}(\beta, L)$, we also assume that the function is upper bounded by 1. This assumption is natural in image processing applications, where pixel values—corresponding to each $f(i/n)$ in our model—are stored using a finite number of bits. Without loss of generality, We have scaled the function so that the upper bound is equal to $1$.

\section{Proof of \cref{mimimax error for the classical additive noise model}}\label{proof of mimimax error for the classical additive noise model}
In this appendix, we follow the approach used in \cite{Mainbook} and \cite{Nonp} to present the proof of \cref{mimimax error for the classical additive noise model}. While \cite{Mainbook} and \cite{Nonp} assume \( \sigma_n = \Theta(1) \), allowing certain terms to be neglected, our analysis also considers the regime \( \sigma_n \to \infty \), and $\sigma_n \rightarrow 0$ requiring us to account for these previously negligible terms. Aside from this distinction, the overall proof strategy closely follows the methods in \cite{Mainbook} and \cite{Nonp}.

\subsection{The $L_2$-norm case}

\subsubsection{Proof of the lower bound}\label{proof of the lower bound for the classical model in L_2 norm}

For obtaining lower bounds we consider the following three cases:

\begin{itemize}
\item $\sigma_n=O(n^{-\beta})$: Note that in this case \cref{mimimax error for the classical additive noise model} suggests that
\[
\widetilde{R}_2(\Sigma_{\mathfrak{h}}(\beta, L), \sigma_n) = \Theta (n^{-2\beta}).
\]
To prove this claim, first note that from the monotonicity of $\widetilde{R}_2(\Sigma_{\mathfrak{h}}(\beta, L), \sigma_n)$, i.e. \cref{lem:risk:monotonicity}, we have
\[
\widetilde{R}_2(\Sigma_{\mathfrak{h}}(\beta, L), \sigma_n) \geq \widetilde{R}_2(\Sigma_{\mathfrak{h}}(\beta, L), 0). 
\]
Hence, we prove a lower bound for the case $\sigma_n=0$. To obtain a lower bound in this case, we choose two hypotheses \begin{align}\label{two hypotheses}
    \begin{split}
        & f_{0,n}(x):\equiv \mathfrak{h}, \text{ and}\\
    & f_{1,n}(x):= \mathfrak{h}+\sum_{i=0}^{n-1} n^{-\beta}\phi\pr{\frac{x-(2i+1)}{2n}},
    \end{split}
\end{align}
where $\phi$ is the basic function constructed in \cref{explicit construction of basic function}. Similar to the proof of \cref{f and f^2 holder} it is straightforward to confirm that $f_{j,n}\in \Sigma(\beta, L)$ for $j=0,1$ and
\begin{align}
 \begin{split}
        \nm{f_{1,n}-f_{0,n}}_2^2 & = \frac{L^2 n^{-2\beta}}{\overline{\phi_0}^2}\sum_{i=1}^{n-1}\int_{i/n}^{(i+1)/n}\exp\pr{-\frac{8}{1-4n^2 \pr{x-\frac{2i+1}{2n}}^2}} dx  \\
    =& \frac{L^2 n^{-2\beta}}{\overline{\phi_0}^2}\cdot \frac{1}{2n}\sum_{i=0}^{n-1}\int_{-1}^{1}\exp\pr{-\frac{8}{1-x^2}} dx  \\
    =& \Theta_{\beta, L}\pr{n^{-2\beta}}
 \end{split}
\end{align}
Therefore

\begin{align}
  \begin{split}
    \inf_{\hat{f}_n} \sup_{f \in \Sigma_{\mathfrak{h}}(\beta,1)} \mathbb{E} \| \hat{f}_n - f \|_2^2 
    &\ge \inf_{\hat{f}_n} \max \left\{ \| \hat{f}_n - f_{1,n} \|_2^2, \| \hat{f}_n - f_{0,n} \|_2^2 \right\} \\
    &\ge \inf_{\hat{f}_n} \left( \max \left\{ \| \hat{f}_n - f_{1,n} \|_2, \| \hat{f}_n - f_{0,n} \|_2 \right\} \right)^2 \\
    &\ge \frac{1}{4} \inf_{\hat{f}_n} \left( \| \hat{f}_n - f_{1,n} \|_2 + \| \hat{f}_n - f_{0,n} \|_2 \right)^2 \\
    &\ge \frac{1}{4} \| f_{1,n} - f_{0,n} \|_2^2 
    = \Theta_{\beta, L} \left( n^{-2\beta} \right).
  \end{split}
\end{align}
 \item $\sigma_n=w(n^{-\beta})$ and $\sigma_n=o(n^{1/2})$:
In this case, our proof follows steps similar to those in Section 2.6.1 of \cite{Nonp}, with minor modifications to account for the variance of the noise.  

Set $m:=\left \lceil c_0 (n/\sigma_n^2)^{1 \over 2\beta+1} \right \rceil$ and $h_n:=\frac{1}{m}$. For $i=0,1,...,m-1$, we define 
\begin{align*}
    \varphi_i(x):=Lh_n^{\beta}\phi\pr{x-\frac{2i+1}{m} \over h_n}.
\end{align*}
where $\phi$ is the basic function constructed in \cref{explicit construction of basic function}. Set $M=\left \lceil 2^{m/8} \right \rceil$ and for $j=1, 2, \ldots, M$ construct the binary vectors $\omega^{(j)}:=(\omega_1^{(j)},...,\omega_m^{(j)})\in \cb{0,1}^m$ for $j=0,1,...,M$ (with $\omega^{(0)}:=(0,...,0)$) such that they satisfy
\begin{align*}
    \rho(\omega^{(j)},\omega^{(j')})\ge \frac{m}{8}.
\end{align*}
The existence of these vectors is guaranteed by the Gilbert-Varshamov bound (\cref{GV}).

Now we construct the functions $f_{j,n}(x)$ in the following way: 
\begin{align*}
    f_{j,n}(x):=\mathfrak{h}+f_{\omega^{(j)}}(x)=\mathfrak{h}+\sum_{i=0}^m \omega_i^{(j)}\varphi_{i}(x)
\end{align*}
Now if we show
\begin{align*}\label{critial lower bound for tail probability}
    \inf_{\hfn}\max_{f\in \cb{f_{0n},...,f_{Mn}}}\P\pr{\|\hfn-f\|_2^2\ge Ah_n^{-2\beta}}\ge C
\end{align*}
for some constant $C>0$, then it will follow from the Markov's inequality that
\begin{align*}
   \begin{split}
         &\inf_{\hfn} \sup_{f\in \Sigma_{\mathfrak{h}}(\beta, L)} \E\br{\|\hfn-f\|_2^2} \\
    \ge & Ah_n^{-2\beta}\inf_{\hfn}\max_{f\in \cb{f_{0,n},...,f_{M,n}}}\P\pr{\|\hfn-f\|_2^2\ge Ah_n^{-2\beta}}  \\
    =& \Omega(h_n^{-2\beta}).
   \end{split}
\end{align*}

Now, to establish \cref{critial lower bound for tail probability}, we use Theorem 2.5 from \cite{Nonp}. It suffices to verify two assumptions
\begin{enumerate}
    \item $\|f_{j,n}-f_{j',n}\|_2=\Omega_{\beta, L}\pr{ h_n^{-\beta}}$ for $0\le j< j'\le M$; and
    \item For any $1\le j \le M$,
    $\P_{f_{j,n}}\ll \P_{f_{0,n}}$ and 
    \begin{align}
        \frac{1}{M}\sum_{j=1}^M {\rm KL}(\P_{f_{j,n}},\P_{f_{0,n}})\le \alpha \log M
    \end{align}
    for some constant $\alpha>0$.

\vspace{3mm}
    The verification of these two conditions are given in the pages 103 and 106 of \cite{Nonp}, and we will not repeat it here.
\end{enumerate}

\item $\sigma_n = \Omega(n^{1/2})$. 
In the previous step,  for any $\tilde \sigma_n=o(n^{1/2})$, we have established the following lower bound:$\tilde{R}_2(\Sigma_{\mathfrak{h}}(\beta,L),\tilde \sigma_n)= \Omega \pr{\pr{\frac{\tilde \sigma_n^2}{n}}^{\frac{2\beta}{2\beta+1}}}.$ We claim that this forces $\tilde{R}_2(\Sigma_{\mathfrak{h}}(\beta,L),\sigma_n)=\Omega(1)$ for $\sigma_n = \Omega(\sqrt{n})$. Suppose for contradiction that $$u_n:=\tilde{R}_2(\Sigma_{\mathfrak{h}}(\beta,L),\sigma_n)\to 0.$$
Choose $\tilde \sigma_n^2 =nu_n^{\beta+1 \over 2\beta}$. Note that 
\begin{align*}
    \pr{\frac{\tilde \sigma_n^2}{n}}^{\frac{2\beta}{2\beta+1}}=u_n^{\beta+1 \over 2\beta+1}\gg u_n.
\end{align*}
Hence, using the monotonicity of $\tilde{R}_2(\Sigma_{\mathfrak{h}}(\beta,L),\sigma_n)$ as a function of $\sigma_n$ proved in \cref{lem:risk:monotonicity} we should have: 
\begin{align*}\label{A:contradict:squarelower1}
\tilde{R}_2(\Sigma_{\mathfrak{h}}(\beta,L),\tilde \sigma_n) \leq \tilde{R}_2(\Sigma_{\mathfrak{h}}(\beta,L),\sigma_n)=u_n
\end{align*}
However, from the previous section since $\tilde{\sigma}_n=w(n^{-\beta})$ and $\tilde{\sigma}_n=o(n^{1/2})$
\begin{align*}\label{A:contradict:squarelower2}
\tilde{R}_2(\Sigma_{\mathfrak{h}}(\beta,L),\tilde{\sigma}_n) = \Omega\pr{\left(\frac{\tilde \sigma_n^2}{n}\right)^{\frac{2\beta}{2\beta+1}}} =\Omega\left(u_n^{\beta+1 \over 2\beta+1} \right). 
\end{align*}
Note that \eqref{A:contradict:squarelower2} and \eqref{A:contradict:squarelower1} are in contradiction with each other. 
\end{itemize}

\subsubsection{Proof of the upper bound}\label{Proof of the upper bound for the L_2 case}
To obtain an upper bound for $R_2(\Sigma_{\mathfrak{h}}(\beta,L), \sigma_n)$, following Section 1.6 of \cite{Nonp}, we also consider the local polynomial estimator 
\begin{align*}
    \hfn:=\sum_{i=1}^n Y_iW_{ni}^*(x)
\end{align*}
where $W_{ni}^*$ is defined in \cref{eq:W*def}. We have 
\begin{align*}
     \mathbb{E}_f \|f- \hat{f}_n\|_2^2=\uint b^2(x)dx+\uint \sigma^2(x)dx,
\end{align*}
where
\begin{align*}
    b(x):=\E_f\br{\hat{f}_n(x)}-f(x) \quad \text{ and } \quad \sigma^2(x):=\E_f\br{\pr{\hat{f}_n(x)-\E_f\br{\hat{f}_n(x)}}^2}
\end{align*}
Combining the result of Lemma 1.5 and Proposition 1.13 of \cite{Nonp} leads to the following lemma:
\begin{lemma}\label{lem:tsybakov:result}
If as $n \rightarrow \infty$, $h_n \rightarrow 0$ and $n h_n \rightarrow \infty$, then there exists two constants $q_1, q_2$ (not depending on $n$, but can depend on $\beta$ and $L$) such that 
\begin{align}
\begin{split}
    b^2(x) &\leq q_1 h_n^{2 \beta},  \\
\sigma^2(x) &\leq q_2 \frac{\sigma_n^2}{n h_n}. 
\end{split}
\end{align}
\end{lemma}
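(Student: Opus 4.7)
The plan is to handle the bias and variance separately, both relying entirely on \cref{basic properties of W} so that no new technical ingredient is needed. Writing $\hat f_n(x) - f(x) = b(x) + \sum_{i=1}^n \tau_i W_{ni}^*(x)$, I would identify $b(x) = \sum_i f(i/n) W_{ni}^*(x) - f(x)$ as the (deterministic) bias term, and $\sum_i \tau_i W_{ni}^*(x)$ as the zero-mean stochastic error whose variance is $\sigma^2(x)$, using independence of the $\tau_i$'s and the fact that $W_{ni}^*$ is a deterministic function of $x$ and $n$.

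For the bias bound, the key observation is part (1) of \cref{basic properties of W}: the weights $W_{ni}^*$ exactly reproduce polynomials of degree at most $k$, where $k$ is the largest integer strictly less than $\beta$. Letting $T_k(y;x)$ denote the Taylor polynomial of $f$ of order $k$ around $x$, this reproduction property gives $\sum_i T_k(i/n;x)W_{ni}^*(x) = f(x)$, so $b(x) = \sum_i [f(i/n) - T_k(i/n;x)] W_{ni}^*(x)$. The Hölder condition $f\in\Sigma(\beta,L)$ yields the Taylor remainder estimate $|f(y) - T_k(y;x)| \leq (L/k!)|y-x|^{\beta}$. Because the kernel $K$ in \cref{the kernel function} is supported on $[-1,1]$, $W_{ni}^*(x) = 0$ whenever $|i/n - x| > h_n$, so the remainder is uniformly bounded by $(L/k!)h_n^{\beta}$ on the active indices. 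Combining this with part (3) of \cref{basic properties of W}, namely $\sum_i |W_{ni}^*(x)| \leq C_*$, gives $|b(x)| \leq (LC_*/k!)h_n^{\beta}$ uniformly in $x\in[0,1]$, so $b^2(x) \leq q_1 h_n^{2\beta}$ with $q_1 := (LC_*/k!)^2$.

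For the variance, the calculation is shorter. Since the $\tau_i$'s are independent $\mathcal{N}(0,\sigma_n^2)$ and the weights are deterministic, $\sigma^2(x) = \sigma_n^2 \sum_i (W_{ni}^*(x))^2$. Applying parts (2) and (3) of \cref{basic properties of W} in tandem, I bound $\sum_i (W_{ni}^*(x))^2 \leq (\max_i |W_{ni}^*(x)|)\cdot \sum_i |W_{ni}^*(x)| \leq (C_*/(nh_n))\cdot C_*$, so $\sigma^2(x) \leq C_*^2 \sigma_n^2/(nh_n)$, which is the claimed bound with $q_2 := C_*^2$.

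I do not foresee a serious obstacle: the entire argument is bookkeeping on top of \cref{basic properties of W}. The only subtle point worth flagging is ensuring that the constants appearing in that proposition---in particular the uniform spectral lower bound on $\mathcal{B}_{nx}$ underlying part (4)---are genuinely independent of $x\in[0,1]$ and of $n$. This uniformity is precisely where the hypothesis $nh_n\to\infty$ enters: it lets one approximate $(1/(nh_n))\mathcal{B}_{nx}$ by a fixed positive-definite moment matrix of the kernel via a Riemann-sum argument, as carried out in Section~1.6 of \cite{Nonp}, which is what ultimately legitimizes treating $C_*$ as a constant depending only on $\beta$ and $L$.
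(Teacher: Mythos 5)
Your proposal is correct, and it is essentially the standard derivation behind the results that the paper merely cites (Lemma~1.5 and Proposition~1.13 of Tsybakov): the bias bound uses the polynomial-reproduction property plus the Taylor remainder estimate $|f(y)-T_k(y;x)|\le (L/k!)|y-x|^{\beta}$ on the kernel's support, and the variance bound uses $\sum_i (W^*_{ni})^2\le \max_i|W^*_{ni}|\cdot\sum_i|W^*_{ni}|$. The only thing worth noting is that you correctly interpret part~(1) of Proposition~\ref{basic properties of W} as giving $\sum_i(i/n-x)^rW^*_{ni}(x)=0$ for all $r=1,\dots,k$ (a typo in the displayed statement aside), which is the reproduction property actually needed.
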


Now based on this result we consider the following three case:

\begin{itemize}
    \item $\sigma_n=\Omega(n^{1/2})$: In this case, by choosing the estimator function $\hfn=0$, it is trivial to confirm that:
    \[
    \tilde{R}_2 (\Sigma_{\mathfrak{h}}(\beta, L), \sigma_n) \leq \sup_{f \in \Sigma_{\mathfrak{h}}(\beta, L)} \mathbb{E}\|0- f\|_2^2 \leq 1. 
    \]

    \item $\sigma_n=\omega(n^{-\beta})$,  but $\sigma_n=o(n^{-1/2})$: In this case, we set $h_n:=\pr{\sigma_n^2 \over n}^{1 \over 2\beta+1}$. It is straightforward to confirm that this choice of $h_n$ satisfies the requirements of \cref{lem:tsybakov:result}. Hence,
\begin{align}
  \begin{split}
        &\tilde{R}_2 (\Sigma_{\mathfrak{h}}(\beta, L), \sigma_n)\\ \leq & \sup_{f \in \Sigma1(\beta, L)} \mathbb{E}\|\hat{f}_n(x)- f\|_2^2  \\
    =& \uint b^2(x)dx+\uint \sigma^2(x)dx  \\
    \leq & q_1 \pr{\sigma_n^2 \over n}^{2 \beta \over 2\beta+1} + q_2 \pr{\sigma_n^2 \over n}^{2 \beta \over 2\beta+1} = O\pr{\pr{\sigma_n^2 \over n}^{2\beta \over 2\beta+1}}. 
  \end{split}
    \end{align}

    \item $\sigma_n=O( n^{-\beta})$: Recall that in this case we established a lower bound of  $\Theta(n^{-2\beta})$ for the minimax MSE. Suppose that for some $\sigma_n=O( n^{-\beta})$ we have
    \[
    u_n := \frac{R_2 (\Sigma_{\mathfrak{h}}(\beta, L), \sigma_n^2)}{n^{-2\beta}} \rightarrow \infty. 
    \]
    Define $\tilde{\sigma}_n = n^{-\beta}u_n$. Note that $\tilde{\sigma}_n = \omega(n^{-\beta})$. Hence, by using the locally linear polynomial estimator we can prove that 
    \[
    \tilde{R}_2(\Sigma_{\mathfrak{h}}(\beta, L), \tilde{\sigma}_n) = O\left(\left(\frac{\tilde{\sigma}_n^2}{n}\right)^{\frac{2 \beta}{2 \beta+1}}\right) = O\left(n^{-2\beta} u_n^{\frac{2 \beta}{2 \beta+1}}\right). 
    \]
Hence, since $u_n \rightarrow \infty$ we can conclude that
\[
\tilde{R}_2 (\Sigma(\beta, L), \tilde{\sigma}_n) < \tilde{R}_2 (\Sigma(\beta, L), {\sigma}_n). 
\]
However, this is in contradiction with the non-decreasing nature of the minimax risk proved in \cref{lem:risk:monotonicity}. 

   \iffalse 
    On the other hand, by the fact that the error is a monotone non-decreasing function of the variance $\sigma_n$, we must have
    \begin{align}
        \sup_{f\in \Sigma_{\mathfrak{h}}(\beta, L)}\E\|\hfn-f\|_2^2=O\pr{\pr{w_n^2 \over n}^{\frac{2\beta}{2\beta+1}}}
    \end{align}
    for any $w_n=\omega(n^{-\beta})$. We claim that this forces $\sup_{f\in \Sigma_{\mathfrak{h}}(\beta, L)}\E\|\hfn-f\|_2^2=O\pr{n^{-2\beta}}$. Indeed, if on the contrary we have at the noise level $\sigma_n=O(n^{-\beta})$ that
    \begin{align}
        \sup_{f\in \Sigma_{\mathfrak{h}}(\beta, L)}\E\|\hfn-f\|_2^2=\eta_n \cdot n^{-2\beta}
    \end{align}
     with $\limsup_{n\to \infty}\eta_n=+\infty$. By passing to a subsequence we assume the $\lim_{n\to \infty}\eta_n=+\infty$.
     Then as $\eta_n \cdot n^{-2\beta}=\pr{\frac{\eta_n^{2\beta+1 \over 2\beta}/n^{2\beta}}{n}}^{\frac{2\beta}{2\beta+1}}$, we have a strictly higher error rate than the error in the noise level $w_n=\eta_n^{1/2}/n^{\beta}=\omega(n^{-\beta})$. This contradicts the monotonicity of the error rate as a function of variance. Therefore, we must have
     \begin{equation}
         \sup_{f\in \Sigma_{\mathfrak{h}}(\beta, L)}\E\|\hfn-f\|_2^2=O(n^{-2\beta})
     \end{equation}
     \fi
\end{itemize}

\subsection{The $L_{\infty}$-norm case}
\subsubsection{Proof of the lower bound} 
For obtaining the lower bound, again we consider the following two different cases: 
\begin{itemize}
\item $\sigma_n = O((\log n)^{-1/2}n^{-\beta})$: As in the case with the $L_2$-norm, by monotonicity (\cref{lem:risk:monotonicity}), it suffices to establish the lower bound rate $n^{-\beta}$ for $\sigma_n=0$. In this case, we use the same hypotheses $f_{0,n}$ and $f_{1,n}$ as in \cref{two hypotheses}. It follows that 
\begin{align*}
    \nm{f_{1,n}-f_{0,n}}_{\infty}=\Theta(n^{-\beta}),
\end{align*}
and therefore,
% \begin{align*}
%     \inf_{\hfn} \sup_{f\in \Sigma_{\mathfrak{h}}(\beta, L)} \E_f\|\hfn-f\|_{\infty}
%     =\Omega({n^{-\beta}}).
% \end{align*}

\begin{align*}
   \begin{split}    &\inf_{\hfn}\sup_{f\in\Sigma_{\mathfrak{h}(\beta,L)}}\E \nm{\hfn-f}_{\infty} \\
\ge &\inf_{\hfn} \max \cb{ \nm{\hfn-f_{1,n}}_{\infty}, \nm{\hfn-f_{0,n}}_{\infty}} \\
\ge &  \frac{1}{2}\nm{f_{1,n}-f_{0,n}}_{\infty} =\Omega\pr{n^{-\beta}}.
   \end{split}
\end{align*}

\item $\sigma_n = \omega\pr{(\log n)^{-1/2}n^{-\beta}}$ and $\sigma_n = o\pr{n^{1-\eta \over 2}}$: 
%\item $\sigma_n = \Omega(n^{1/2})$: \textcolor{blue}{this is the base we are not able to connect from the previous case}

We shall follow the same approach as in Section 2.6.2 of \cite{Nonp}. Construct $M+1$ hypothesis $f_{j,n}$ for $j=0,1,...,M$ as 
\begin{align*}
    \begin{split}
        &f_{0,n}:\equiv \mathfrak{h},\\
    &f_{j,n}:=\mathfrak{h}+h_n^{\beta}\phi\pr{x-(2j-1)/M \over h_n}
    \end{split}
\end{align*}
where $\phi$ is the basic function defined in \cref{explicit construction of basic function}, $h_n=\pr{\log \frac{n}{\sigma_n^2}\middle/ \frac{n}{\sigma_n^2}}^{\frac{1}{2\beta+1}}$ and $M=\left \lceil c_0 h_n^{-1}\right \rceil$. Now $f_{j,n}\in \Sigma_{\mathfrak{h}}(\beta, L)$. For $0\le j<j'\le M$, we have
\begin{align*}
    \|f_{j,n}-f_{j',n}\|_{\infty}= \Omega_{\beta, L}\pr{h_n^{\beta}}.
\end{align*}
Now following the same argument as in the page 109 of \cite{Nonp}, there exists $\alpha\in \pr{0,\frac{1}{8}}$
\begin{align*}
    & \frac{1}{M}\sum_{j=1}^M {\rm KL}(\P_{f_{j,n}},\P_{f_{0,n}})\le \alpha \log M
\end{align*}
for sufficiently large $c_0>0$. By Theorem 2.5 from \cite{Nonp} and Markov's inequality, we have
\begin{align*}
    \inf_{\hfn} \sup_{f\in \Sigma_{\mathfrak{h}}(\beta, L)} \E_f\|\hfn-f\|_{\infty}
    =\Omega_{\beta, L}\pr{h_n^{\beta}}
\end{align*}
In the case when $\sigma_n^2=O\pr{n^{1-\eta}}$ for some $\eta\in (0,1)$, the bound reduced to $\Omega_{\eta, \beta, L}\pr{\sigma_n^2 \log n \over n}.$
\end{itemize}

\subsubsection{Proof of the upper bound}

\begin{itemize}
\item $\sigma_n = \omega\pr{(\log n)^{-1/2}n^{-\beta}}$ and $\sigma_n = o\pr{n^{1-\eta \over 2}}$:
Following the approach in Section 1.6.2 of \cite{Nonp}, 
\begin{align*}
    \E\nm{\hfn-f}_{\infty}^2
 \le 2\E\nm{\hfn-\E \hfn}_{\infty}^2 + 2\nm{\E\hfn-f}_{\infty}^2 
\end{align*}
Under the assumption that $h_n\to 0$ and $nh_n\to \infty$ and using the local polynomial estimator $\hfn$, we have
\begin{align*}
    \nm{\E\hfn-f}_{\infty}^2=h^{-2\beta},
\end{align*}
and
\begin{align*}
    \E\nm{\hfn-\E \hfn}_{\infty}^2=O_{\beta,L}\pr{\frac{\sigma_n^2\log M}{nh_n}+ \frac{\sigma_n^2}{M^2h_n^4}}. 
\end{align*}

we obtain the estimate under local polynomial estimator $\hfn$:
\begin{align}\label{three term estimation for the l infinity norm case}
    \begin{split}
        \E\br{\|\hfn-f\|^2_{\infty}}=& O_{\beta,L}\pr{\max\pr{2,\frac{1}{nh_n}}\frac{\sigma_n^2\log M}{nh_n}+ \frac{\sigma_n^2}{M^2h_n^4} + h_n^{2\beta}}\\
    =& O_{\beta,L}\pr{\frac{\sigma_n^2\log M}{nh_n}+ \frac{\sigma_n^2}{M^2h_n^4} + h_n^{2\beta}}. 
    \end{split}
\end{align}

%In the case of $\sigma_n=0$, we can choose $h_n=\frac{1}{2}$ and in this case the upper bound is $O\pr{n^{-2\beta}}$. 

Choose $M=(n/\sigma_n^2)^r$. Then the estimate becomes
\begin{align*}
 \begin{split}
     & O_{\beta,L}\pr{\frac{r\sigma_n^2\log (n/\sigma_n^2)}{nh_n}+ \frac{\sigma_n^2}{(n/\sigma_n^2)^{2r} h_n^4} + h_n^{2\beta}}\\
=& O_{\beta,L}\pr{\frac{r\sigma_n^2\log (n/\sigma_n^2)}{nh_n}+ \frac{\sigma_n^{4r+2}}{n^{2r} h_n^4} + h_n^{2\beta}}
 \end{split}
\end{align*}
We first optimize the first and last terms by choosing 
\begin{align*}
\tilde h_n:=\pr{r\log(n/\sigma_n^2)\over n/\sigma_n^2}^{1 \over 2\beta+1}
\end{align*}
In the case of $\sigma_n^2=o(n^{1-\eta})$ and $\sigma_n^2=\omega\pr{(\log n)^{-1}n^{-2\beta}}$, we have $\tilde h_n\to 0$ and $n\tilde h_n\to \infty$, respectively.

To optimize this three-term sum, we note that by plugging $\tilde h_n$ into the first and the third terms together (they have the same decay rate at $\tilde h_n$) give the upper bound for the sum these two parts as
\begin{align*} O_{\beta,L}\pr{h_n^{2\beta}}=O_{\beta,L}\pr{\pr{r\log(n/\sigma_n^2)\over n/\sigma_n^2}^{2\beta \over 2\beta+1}}
\end{align*}

By our assumption that $\sigma_n^2=o(n^{1-\eta})$ for some $\eta\in (0,1)$, we evaluate the middle term at this $\tilde h_n$ and compare it to $\tilde h_n^{2\beta}$:

\begin{align*}
 \frac{\sigma_n^{4r+2}}{n^{2r} \tilde h_n^4}   \cdot \tilde h_n^{-2\beta} =\sigma_n^2 \cdot \frac{\sigma_n^{4r}}{n^{2r}}\cdot \frac{1}{\tilde h_n^{2\beta+4}}=\sigma_n^2\cdot \frac{\sigma_n^{4r}}{n^{2r}}\cdot \pr{n/\sigma_n^2 \over r\log(n/\sigma_n^2) }^{2\beta+4 \over 2\beta+1}
\end{align*}
Since $\sigma_n^2=O(n^{1-\eta})$ for some $\eta\in (0,1)$, there exists a sufficiently large $r>0$ that only depends on $\eta$ and $\beta$ such that this quotient decays to zero, so that the middle term from \cref{three term estimation for the l infinity norm case} is negligible. Therefore in this case we have the upperbound 
\begin{align*}
& \E\nm{\hfn-f}_{\infty}\le \pr{\E\nm{\hfn-f}_{\infty}^2}^{1/2}\\
=& O_{\beta,L}\pr{\pr{\log(n/\sigma_n^2)\over n/\sigma_n^2}^{\beta \over 2\beta+1}}=O_{\eta, \beta,L}\pr{\pr{\log n \over n/\sigma_n^2}^{\beta \over 2\beta+1}}
\end{align*}

\item $\sigma_n=O\pr{(\log n)^{-1/2}n^{-\beta}}$: Recall that at this noise level, we already have a lower bound at the rate of $n^{-\beta}$, and for any $\sigma_n=\omega\pr{(\log n)^{-1/2}n^{-\beta}}$ and $\sigma_n = o\pr{n^{1-\eta \over 2}}$, we have an upper bound at the rate of $\pr{\log n \over n/\sigma_n^2}^{\beta \over 2\beta+1}$. Arguing in the same way as in the $L_2$-norm case with \cref{lem:risk:monotonicity} gives the upperbound $O_{\beta,L}(n^{-\beta})$ as desired.

\vspace{3mm}
Combining these, we have for $\sigma_n^2=O(n^{1-\eta})$ the upperbound
\begin{align*}
\begin{split}
    & O_{\beta,L}\pr{\max\cb{n^{-\beta}, \pr{\log(n/\sigma_n^2)\over n/\sigma_n^2}^{\beta \over 2\beta+1}}}\\
=&O_{\eta,\beta,L}\pr{\max\cb{n^{-\beta}, \pr{\log n \over n/\sigma_n^2}^{\beta \over 2\beta+1}}}.
\end{split}
\end{align*}
\end{itemize}

\subsection*{Acknowledgment}
We would like to thank anonymous referees for their careful reading and valuable feedback, which led to significant improvements in this work. Arian Maleki is supported in part by ONR award no. N00014-23-1-2371.

%\appendix
%\section{An example appendix} 

%\section*{Acknowledgments}
%We would like to acknowledge the assistance of volunteers in putting
%together this example manuscript and supplement.

%\bibliographystyle{siamplain}
\bibliographystyle{apalike}

\bibliography{references}
\end{document}